\newtheorem{thm}{Theorem}[section]
\newtheorem{prop}[thm]{Proposition}
\newtheorem{lem}[thm]{Lemma}
\newtheorem{cor}[thm]{Corollary}
\newtheorem{conj}[thm]{Conjecture}
\theoremstyle{definition}
\newtheorem{defn}[thm]{Definition}
\newtheorem{exam}[thm]{Example}
\newtheorem{rmk}[thm]{Remark}
\theoremstyle{remark}
\newtheorem*{notation}{Notations}
\newcommand\asc{\operatorname{asc}}
\newcommand\des{\operatorname{des}}
\newcommand\inv{\operatorname{inv}}
\newcommand\inc{\operatorname{inc}}
\newcommand\sgn{\operatorname{sgn}}
\newcommand\sh{\operatorname{sh}}
\newcommand\rank{\operatorname{rank}}
\newcommand\wt{\operatorname{wt}}
\newcommand\WM{\operatorname{WM}}
\newcommand\Des{\operatorname{Des}}
\newcommand\Par{\operatorname{Par}}
\newcommand\Sym{\mathsf{Sym}}
\newcommand\QSym{\mathsf{QSym}}
\newcommand\trigeq{\unrhd}
\newcommand\TT{\mathcal{T}}
\newcommand\NN{\mathcal{N}}
\newcommand\UU{\mathcal{U}}
\newcommand\EE{\mathcal{E}}
\newcommand\EP{\mathcal{EP}}
\newcommand\SSS{\mathcal{S}}
\newcommand\II{\mathcal{I}}
\newcommand\LL{\mathcal{L}}
\newcommand\HH{\mathcal{H}}
\newcommand\xx{\mathbf{x}}
\newcommand\yy{\mathbf{y}}
\newcommand\uu{\mathbf{u}}
\newcommand\mm{\mathbf{m}}
\newcommand\ee{\mathfrak{e}}
\newcommand\hh{\mathfrak{h}}
\newcommand\sch{\mathfrak{J}}
\newcommand\pp{\mathfrak{p}}
\newcommand\mono{\mathfrak{m}}
\newcommand\ff{\mathfrak{f}}
\newcommand\ww{\mathsf{w}}
\newcommand\vv{\mathsf{v}}
\newcommand\pathp{\mathsf{p}}
\newcommand\oo{\mathfrak{o}}
\newcommand\RECTX{2.5}
\newcommand\RECTY{1}
\newcommand\equivclass[1]{#1/{\sim}}
\newcommand\qand{\quad\mbox{and}\quad}
\title[Noncommutative \( P \)-symmetric functions]{Chromatic quasisymmetric functions and noncommutative $P$-symmetric functions}
\author{Byung-Hak Hwang}
\email{byunghakhwang@gmail.com}
\keywords{Chromatic quasisymmetric functions, noncommutative symmetric functions, the $e$-positivity conjecture, Stanley--Stembridge conjecture}
\begin{document}
\begin{abstract}
  For a natural unit interval order $P$, we describe proper colorings of
  the incomparability graph of $P$ in the language of heaps.
  We also introduce a combinatorial operation, called a \emph{local flip},
  on the heaps. This operation defines an equivalence relation on the proper
  colorings, and the equivalence relation refines the ascent statistic introduced
  by Shareshian and Wachs.

  In addition, we define an analogue of noncommutative symmetric functions
  introduced by Fomin and Greene, with respect to $P$.
  We establish a duality between
  the chromatic quasisymmetric function of $P$ and these noncommutative
  symmetric functions. This duality leads us to positive expansions of the
  chromatic quasisymmetric functions into several symmetric function bases.
  In particular, we present some partial results for the $e$-positivity conjecture.
\end{abstract}

\maketitle
\tableofcontents

\section{Introduction} \label{sec:intro}
\subsection{Chromatic quasisymmetric functions}
Chromatic quasisymmetric functions are one of the most notable objects in
algebraic combinatorics, because of their connections with other fields.
In \cite{Stanley1995}, Stanley introduced \emph{chromatic symmetric functions}
which generalize chromatic polynomials. For a graph $G$,
the chromatic symmetric function $X_G(\xx)$ of $G$ is defined by
\[
  X_G(\xx) = \sum_{\kappa} x^\kappa,
\]
where $\xx = \{x_1, x_2, \dots\}$ is a set of formal commuting variables,
$\kappa$ ranges over all proper colorings of $G$ and
$x^\kappa=\prod_{v\in V(G)} x_{\kappa(v)}$.
By definition, $X_G(1^n)$ counts proper colorings of $G$ with $n$ colors.
Stanley presented many interesting properties of $X_G(\xx)$ and a famous
conjecture; the so-called \emph{$e$-positivity conjecture}
(Conjecture~\ref{conj:original_e_positivity_conj}).
In fact, the $e$-positivity conjecture is equivalent to a conjecture on
immanants suggested by Stanley and Stembridge~\cite{SS1993}.
This conjecture is one of the most famous long-standing open problems in
algebraic combinatorics.

Although chromatic symmetric functions are of some interest in their own right,
the quasisymmetric refinement of $X_G(\xx)$, due to Shareshian and
Wachs~\cite{SW2016}, provides connections with other mathematical objects.
The \emph{chromatic quasisymmetric function} $X_G(\xx,q)$ of a graph $G$
whose vertex set is $[n]=\{1,2,\dots,n\}$ is defined as follows:
\[
  X_G(\xx,q) = \sum_{\kappa} q^{\asc(\kappa)} x^\kappa,
\]
where $\kappa:[n]\rightarrow \mathbb{P}$ ranges over all proper colorings of $G$,
and $\asc(\kappa)$ is the number of edges $\{i,j\}$ such that $i<j$ and
$\kappa(i)<\kappa(j)$. By definition, $X_G(\xx,q)$ is in general a quasisymmetric
function, not a symmetric function. 
When \( G \) is the incomparability graph of a natural unit interval order,
Shareshian and Wachs showed that  the quasisymmetric function \( X_G(\xx,q) \)
turns out to be symmetric~\cite[Theorem~4.5]{SW2016}.
They also refined the $e$-positivity conjecture
(some notions will be defined in Section~\ref{sec:background}):
for a natural unit interval order $P$, $X_{\inc(P)}(\xx,q)$ is $e$-positive.
Note that Stanley's original conjecture~\cite{Stanley1995} is for more general
graphs, the incomparability graphs of \( \mathbf{(3+1)} \)-free posets,
but due to Guay-Paquet~\cite{Guay-Paquet2013}, it suffices to consider the
incomparability graphs of natural unit interval orders.

Since this quasisymmetric generalization was introduced, the chromatic
quasisymmetric functions have been more actively studied than ever before.
For example, Carlsson and Mellit~\cite{CM2018} gave a plethystic relation between
chromatic quasisymmetric functions and unicellular LLT polynomials.
Clearman, Hyatt, Shelton and Skandera~\cite{CHSS2016} observed a relationship between chromatic quasisymmetric functions and characters of Hecke algebras of type A. Their result refines the previous work of Haiman~\cite{Haiman1993}.

Also, there is an algebraic geometry perspective for understanding the chromatic quasisymmetric functions. Let $P$ be a natural unit interval order on $[n]$.
Tymoczko~\cite{Tymoczko2008a,Tymoczko2008b} defined an $\mathfrak{S}_n$-action on the (equivariant)
cohomology of the regular semisimple Hessenberg variety corresponding to $P$.
Based on Tymoczko's work, Shareshian and Wachs~\cite{SW2016} conjectured that
the Frobenius characteristic of the cohomology of the variety is equal to
$\omega X_P(\xx,q)$.
This conjecture was proved independently by Brosnan--Chow~\cite{BC2018} and
Guay-Paquet~\cite{Guay-Paquet2016}, and hence it allows us to use geometric
approaches for studying chromatic quasisymmetric functions. For example,
Harada and Precup conjectured a recurrence relation on the \( e \)-coefficients of
the chromatic quasisymmetric functions of natural
unit interval orders \( P \) (\cite[Conjecture~8.1]{HP2019}),
and proved it for a very special class of \( P \) via geometric techniques.
Consequently, they showed that the chromatic quasisymmetric functions of such
\( P \) is \( e \)-positive using the recurrence relation.

The Shareshian--Wachs quasisymmetric refinement provides a clue towards resolving the $e$-positivity conjecture.
The chromatic quasisymmetric function $X_G(\xx,q)$ contains more information about
colorings compared to $X_G(\xx)$, and hence it is obvious that the conjecture of
Shareshian--Wachs implies the original $e$-positivity conjecture of
Stanley--Stembridge.
However, thanks to the quasisymmetric generalization,
we can cluster colorings of $G$ according to their ascent statistics.
This clustering directs our attention to specific colorings rather than the entire set.
In this context, the quasisymmetric generalization gives us a hint for the
\( e \)-positivity conjecture.
This prompts the question of finding a natural refinement of the Shareshian--Wachs refinement.

One of the goals of the paper is to answer this question. We will introduce a
combinatorial operation, called a \emph{local flip}, on proper colorings.
This operation establishes an equivalence relation on proper colorings,
and all colorings belonging to each equivalence class have the same ascent statistic.
Consequently, this relation refines the ascent statistic, and hence, the Shareshian and Wachs quasisymmetric refinement.
We anticipate that our refinement will contribute to resolving the $e$-positivity
conjecture in a similar vein.

\subsection{Positivity of symmetric functions}
Whenever a new class of symmetric functions is introduced, a natural question
emerges regarding their positivity with respect to various symmetric function
bases.
Establishing the positivity of a given symmetric function entails
the development of numerous combinatorial and algebraic tools.
For example, approaches using RSK-like algorithms or crystals are efficient for
showing Schur positivity of a given symmetric function.
(Recently, Kim and Pylyavskyy~\cite{KP2021} gave an RSK-like algorithm for
natural unit interval orders with some conditions.)

Another well-developed tool is the theory of noncommutative symmetric functions.
A prototypical example is the plactic monoid, introduced by Lascoux and
Sch\"{u}tzenberger~\cite{LS1981}, which is the quotient of a free monoid by
the Knuth relations.
The plactic monoid allows us to embed the ring of symmetric functions into a noncommutative ring. Hence it translates many questions from the ring of symmetric functions to the noncommutative ring.
Consequently, certain questions in the noncommutative ring may be more readily addressed than their counterparts in the commutative version.
Especially, using this noncommutative approach, Fomin and Greene~\cite{FG1998} introduced noncommutative Schur functions to prove Schur positivity for various symmetric functions.
In \cite{BF2017}, Blasiak and Fomin gave a more general algebraic framework of this approach. Their framework also involves Lam's~\cite{Lam2008} and a part of Assaf's~\cite{Assaf2015} works.

We devote the second half of this paper to introducing analogues of noncommutative
symmetric functions which reflect properties of a given natural unit interval
order $P$. In the same spirit of \cite{BF2017}, these noncommutative symmetric
functions provide expansions of the chromatic quasisymmetric function of $P$ in
terms of several bases. This approach unifies many results about positivity of
chromatic quasisymmetric functions, including results in
\cite{Athanasiadis2015, CH2019, HP2019, SW2016}.
In particular, we provide partial results for the $e$-positivity conjecture.

\subsection{Outline}
In Section~\ref{sec:background}, we briefly review some necessary background for natural unit interval orders, symmetric functions, and chromatic quasisymmetric functions.
In Section~\ref{sec:heap_local_flip}, we review the definition of heaps,
and describe a relation between proper colorings and heaps.
After that, we define a local flip, which is an operation on heaps.
This operation is central in this paper. Via local flips, we suggest a more
refined $e$-positivity conjecture (Conjecture~\ref{conj:refined_e-positivity}).
We also construct graphs on words according to a natural unit interval order,
similar to \cite{Assaf2015, BF2017}.
In Sections~\ref{sec:Noncomm}, we define
noncommutative $P$-symmetric functions which are analogues of noncommutative
symmetric functions. They reflect properties of \( P \) and local flips.
After we establish a duality between chromatic quasisymmetric
functions and noncommutative $P$-symmetric functions, we give combinatorial
descriptions of the coefficients in the expansions of $X_P(\xx,q)$ into
forgotten symmetric functions (Theorem~\ref{thm:f-positive}),
power sum symmetric functions (Theorem~\ref{thm:p-positive}),
and Schur functions (Theorem~\ref{thm:s-positive}).
Especially, in Section~\ref{sec:Noncomm_mono}, we provide a combinatorial
description of coefficients of $e_\lambda(\xx)$ in the expansions of
$X_P(\xx,q)$ where $\lambda$ is of two-column shape or hook shape.
In addition, we prove the conjecture of Harada--Precup (Theorem~\ref{thm:m=em}), that is, we show that the recurrence relation on the \( e \)-coefficients
hold for arbitrary natural unit interval orders.
We end with some open questions in Section~\ref{sec:future_direction}.

\section{Background} \label{sec:background}
In this section, we review background materials about natural unit interval orders, symmetric functions, and chromatic quasisymmetric functions. 

\begin{notation}
We write $\mathbb{Z}$ (respectively, $\mathbb{N}$ and $\mathbb{P}$) for the set of (respectively, nonnegative and positive) integers. For $n\in\mathbb{P}$, we write $[n] := \{1,2,\dots,n\}$.
\end{notation}

\subsection{Natural unit interval orders and their incomparability graphs} \label{subsec:NUIO}
Fix a positive integer $n$, and let $\mm=(m_1,\dots,m_n)$ be a weakly
increasing integer sequence satisfying $i\le m_i \le n$ for each $i$.
The \emph{natural unit interval order $P=P(\mm)$} corresponding to $\mm$
is a poset on $[n]$ whose ordering $<_P$ is given by $i<_P j$ if $m_i<j$.
The term ``natural unit interval order'' arises from the following unit interval
model: given a natural unit interval order $P$, one can assign a unit interval on
the real line to each $i\in[n]$ such that $i<_P j$ if and only if the unit interval
assigned to $i$ completely lies to the left of the unit interval assigned to $j$.
For example, the natural unit interval order $P=P(2,4,5,5,5)$ and
its unit interval model are presented in Figure~\ref{fig:NUIO_2455}.
\begin{figure}
\centering
\includegraphics[width=0.5\linewidth]{./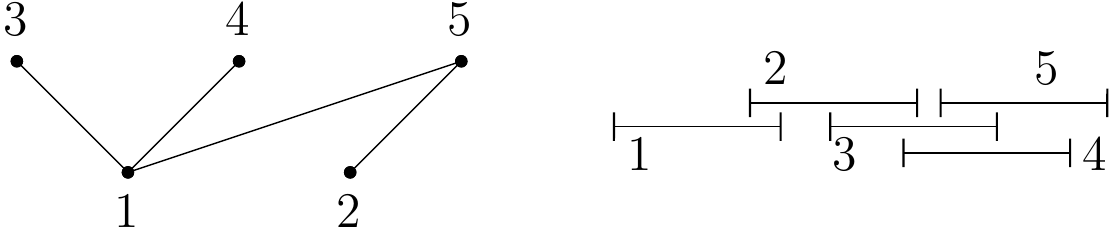}
\caption{The natural unit interval order $P(2,4,5,5,5)$ and its unit interval model.} \label{fig:NUIO_2455}
\end{figure}

For a poset $P$, the \emph{incomparability graph $\inc(P)$} of $P$ is the graph whose vertices are elements of $P$, and for $a,b\in P$, $a$ and $b$ are adjacent if and only if $a$ and $b$ are incomparable in $P$. When $P=P(\mm)$ is a natural unit interval order on $[n]$, for $i<j$, $i$ and $j$ are adjacent if and only if $j \le m_i$. In this paper, we deal with incomparability graphs of natural unit interval orders. By abuse of notation, we sometimes write $P$ instead of $\inc(P)$ for the incomparability graph of $P$.

An easy way to see the incomparability graph of $P(\mm)$ is to use Dyck diagrams.
A \emph{Dyck path} of length $2n$ is a lattice path from $(0,0)$ to $(n,n)$ using
$(1,0)$ and $(0,1)$ steps that never goes below the main diagonal.
There is a simple bijection between natural unit interval orders $P$ on $[n]$
and Dyck paths $D$ of length $2n$. Indeed, the sequence $\mm$ corresponding to $P$
records the heights of horizontal steps in $D$.
To see the incomparability graph of a natural unit interval order $P$,
draw the corresponding Dyck path.
Then unit square boxes which are placed between the diagonal line
and the Dyck path represent edges of $\inc(P)$; see Figure~\ref{fig:m=2455}.

Given a Dyck path $D$ of length $2n$, define the \emph{bounce path} of $D$ as
follows. Beginning at the origin $(0,0)$, the bounce path goes North until
it reaches an East step of $D$. Then it goes East until it reaches
the diagonal line $x=y$. This process continues recursively, and terminates
when it reaches $(n,n)$.
The \emph{height} $h(D)$ of $D$ is one less than the number of intersecting points of the bounce path and the diagonal line. Let $h(\mm) = h(D)$ where $D$ is the Dyck path corresponding to $P(\mm)$.
In fact, the height $h(\mm)$ is the maximum cardinality of independent sets of
$\inc(P(\mm))$, equivalently, the maximum length of chains in \( P(\mm) \).
See Figure~\ref{fig:m=2455} for an example.
\begin{figure}
\centering
\includegraphics[scale=0.6]{./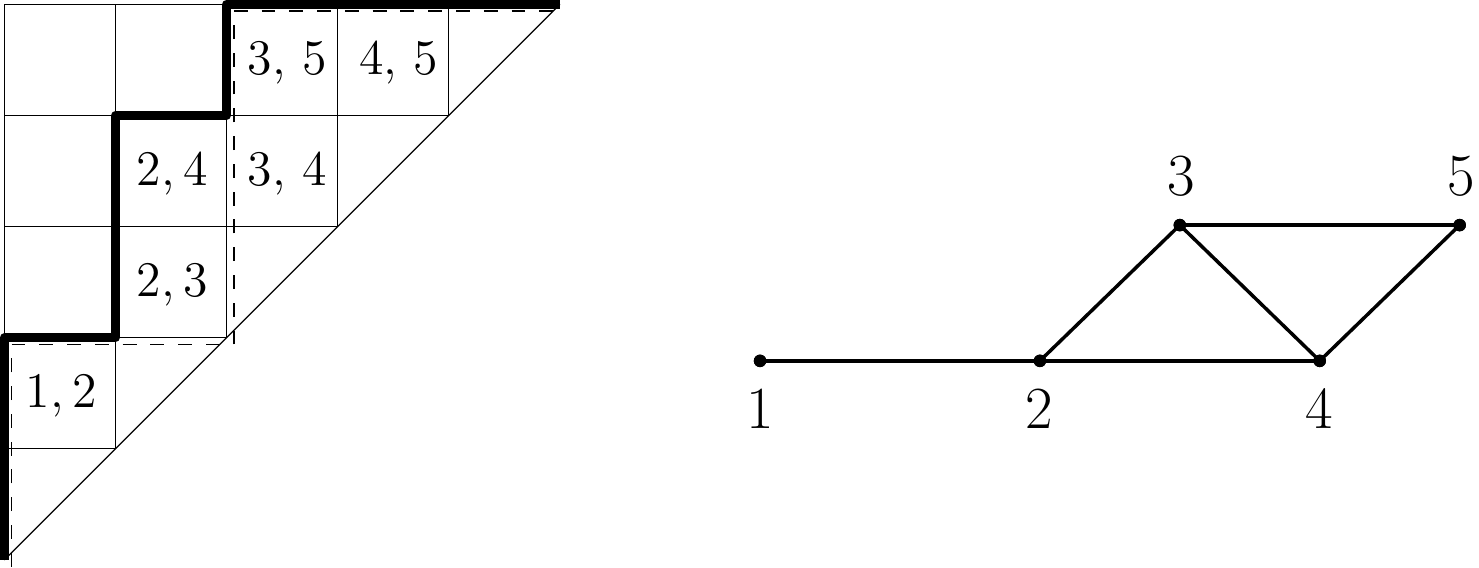}
\caption{The Dyck path corresponding to $\mm=(2,4,5,5,5)$ and the incomparability graph $\inc(P(\mm))$. The dashed line illustrates the bounce path, and the height $h(\mm) = 2$.} \label{fig:m=2455}
\end{figure}

For positive integers $a,b$, let $\mathbf{a+b}$ denote the poset which is a
disjoint union of two chains of length $a$ and $b$. For a poset $P$, we say that
$P$ is \emph{$\mathbf{(a+b)}$-free} if $P$ has no induced subposet isomorphic
to $\mathbf{a+b}$.
Introducing the chromatic symmetric functions, Stanley~\cite{Stanley1995} conjectured
that if $P$ is a $\mathbf{(3+1)}$-free poset, then $X_{\inc(P)}(\xx)$ is
$e$-positive (see Section~\ref{sec:symmetric_function}).
On the other hand, there is a well-known characterization~\cite{SS1958} such that
a poset $P$ is isomorphic to some natural unit interval order if and only if
$P$ is $\mathbf{(3+1)}$-free and $\mathbf{(2+2)}$-free. Thus the class of natural
unit interval orders is a subclass of the class of $\mathbf{(3+1)}$-free posets.
Guay-Paquet~\cite{Guay-Paquet2013}, however, showed that for a \( \mathbf{(3+1)} \)-free
poset \( P \), the chromatic symmetric function \( X_{\inc(P)}(\xx) \) can be
written as a positive linear combination of those of some natural unit interval
orders. We therefore restrict ourselves to natural unit interval orders throughout
this paper. See also Remark~\ref{rmk:2+2}.

\subsection{Words on the alphabet \texorpdfstring{$[n]$}{[n]}}
Fix a natural unit interval order $P$ on $[n]$. We now define some notions for
words on the alphabet $[n]$, with respect to $P$. For a word
$\ww=\ww_1\cdots\ww_d$,
we say that \( \ww \) is of \emph{type \( \mu=(\mu_1,\dots,\mu_n) \)} if the
letter \( i \) appears \( \mu_i \) times in \( \ww \) for each \( i \).
For $1\le i < d$, $i$ is a \emph{$P$-descent} of $\ww$ if $\ww_i >_P \ww_{i+1}$.
Define the \emph{$P$-descent set} $\Des_P(\ww)$ of $\ww$ by
\[
  \Des_P(\ww) := \{ i\in [d-1] \mid \ww_i >_P \ww_{i+1} \}.
\]
For $1\le i < j\le d$, $(i,j)$ is a \emph{\( P \)-inversion pair} of $\ww$
if $\ww_i$ and $\ww_j$ are incomparable in $P$, and $\ww_i>\ww_j$
in the natural order on $\mathbb{P}$.
We denote by $\inv_P(\ww)$ the number of \( P \)-inversion pairs of $\ww$.
For example, if $P=P(2,4,4,4)$ and $\ww=\mathsf{413231}$,
then $\Des_P(\ww) = \{1,5\}$ and $\inv_P(\ww) = 5$.

\subsection{Compositions and partitions}
A \emph{composition $\alpha$} of $n$, denoted by $\alpha\vDash n$, is a finite
sequence $(\alpha_1,\dots,\alpha_\ell)$ of positive integers whose sum equals $n$.
We call each integer $\alpha_i$ a \emph{part} of $\alpha$, and the \emph{length}
$\ell(\alpha)$ of $\alpha$ is the number of parts of $\alpha$.

There is a simple bijection between the set of compositions of \( n \) and the
collection of subsets of $[n-1]$:
given a composition $\alpha=(\alpha_1,\dots,\alpha_\ell)\vDash n$,
define $S(\alpha) := \{\alpha_1,\alpha_1+\alpha_2,\dots,\alpha_1+\cdots+\alpha_{\ell-1}\}\subset [n-1]$.
Conversely, for $S=\{s_1 < s_2 < \dots < s_k\}\subset [n-1]$,
define $\alpha(S) := (s_1, s_2-s_1, \dots, n-s_k)\vDash n$.
Then the two maps \( S \) and \( \alpha \) are inverse to each other.

A \emph{partition} $\lambda$ of $n$, denoted by $\lambda\vdash n$, is a
composition of $n$ satisfying $\lambda_1\ge \lambda_2 \ge \dots \ge \lambda_\ell$.
The \emph{conjugate} of \( \lambda \) is the partition \( \lambda'=(\lambda'_1,
\lambda'_2, \dots) \) where \( \lambda'_i \) is the number of parts of
\( \lambda \) greater than or equal to \( i \).
The \emph{Young diagram} of \( \lambda \) is a left-justified array of \( n \)
boxes with \( \lambda_i \) boxes in the \( i \)th row.
Note that the Young diagram of \( \lambda' \) is the transpose of the diagram
of \( \lambda \).
Denote by $\Par$ the set of partitions.

We will use two orderings on the set of partitions.
For partitions $\lambda,\mu\vdash n$,
we write $\lambda\trigeq\mu$ if $\lambda_1+\cdots+\lambda_i\ge \mu_1+\cdots+\mu_i$
for all $i\ge 1$. We call this order the \emph{dominance order}.
Another order we will use is the \emph{reverse lexicographic order}:
we say that \( \lambda \) is greater than or equal to \( \mu \) with respect to
the reverse lexicographic order if either $\lambda=\mu$, or for some $i$,
\[
\lambda_1=\mu_1,\quad\dots,\quad \lambda_i=\mu_i,\qand \lambda_{i+1}>\mu_{i+1}.
\]
The dominance order is a partial order while the reverse lexicographic order is a total order which is compatible with the dominance order.

\subsection{Symmetric and quasisymmetric functions} \label{sec:symmetric_function}
Let $\xx=(x_1, x_2, \dots)$ be a set of formal variables which commute with each other. A \emph{symmetric function $f(\xx)$} is a bounded degree formal power series in $\mathbb{Q}[[\xx]]$ which is invariant under permutation of the variables. For $n\ge 0$, let $\Sym_n$ be the vector space of symmetric functions of homogeneous degree $n$, and let $\Sym = \bigoplus_{n\ge 0} \Sym_n$ be the $\mathbb{Q}$-algebra of symmetric functions. It is well known that the dimension of $\Sym_n$ is the number of partitions of $n$. 

There are various bases for the space of symmetric functions $\Sym$:
\begin{enumerate}[label=(\roman*)]
\item The \emph{monomial symmetric function}: for a partition $\lambda\in\Par$,
\[
m_\lambda(\xx) := \sum_{\alpha} x^\alpha,
\]
where $\alpha$ ranges over all distinct permutations $\alpha=(\alpha_1, \alpha_2, \dots)$ of the entries of $\lambda$ and $x^\alpha = x_1^{\alpha_1} x_2^{\alpha_2} \cdots$.
\item  The \emph{elementary symmetric function}:
\begin{align*}
e_n(\xx) &:= \sum_{i_1>\cdots>i_n} x_{i_1}\cdots x_{i_n} \quad \mbox{for $n\ge 1$ with $e_0(\xx)=1$, and} \\
e_\lambda(\xx) &:= e_{\lambda_1}(\xx) e_{\lambda_2}(\xx)\cdots \quad \mbox{for a partition $\lambda=(\lambda_1,\lambda_2,\dots)$}.
\end{align*}
\item The \emph{complete homogeneous symmetric function}:
\begin{align*}
h_n(\xx) &:= \sum_{i_1\le\cdots\le i_n} x_{i_1}\cdots x_{i_n} \quad \mbox{for $n\ge 1$ with $h_0(\xx)=1$, and} \\
h_\lambda(\xx) &:= h_{\lambda_1}(\xx) h_{\lambda_2}(\xx)\cdots \quad \mbox{for a partition $\lambda=(\lambda_1,\lambda_2,\dots)$}.
\end{align*}
\item The \emph{power sum symmetric function}:
\begin{align*}
p_n(\xx) &:= \sum_{i} x_{i}^n \quad \mbox{for $n\ge 1$ with $p_0(\xx)=1$, and} \\
p_\lambda(\xx) &:= p_{\lambda_1}(\xx) p_{\lambda_2}(\xx)\cdots \quad \mbox{for a partition $\lambda=(\lambda_1,\lambda_2,\dots)$}.
\end{align*}
\end{enumerate}
From the fact that $\{e_\lambda(\xx)\}_{\lambda\in \Par}$ forms a basis
for $\Sym$, $\{e_1(\xx), e_2(\xx),\dots\}$ are algebraically independent
and generate $\Sym = \mathbb{Q}[e_1(\xx),e_2(\xx),\dots]$.

Let
\[
H(t) = \prod_{i\ge 1} \frac{1}{1-x_i t} = \sum_{n\ge 0} h_n(\xx) t^n \qand E(t) = \prod_{i\ge 1} (1+x_i t) = \sum_{n\ge 0} e_n(\xx) t^n.
\]
By definition, we obtain $H(t)E(-t)=1$, which yields the relation
\begin{equation} \label{eq:relation_eh}
h_k(\xx) - e_1(\xx) h_{k-1}(\xx) + \cdots + (-1)^k e_k(\xx) = \delta_{k,0}
\end{equation}
between complete homogeneous symmetric functions and elementary symmetric functions. Similarly let
\[
P(t) = \sum_{i\ge 1} \frac{x_i}{1-x_i t} = \sum_{n\ge 0} p_{n+1}(\xx) t^n.
\]
It is easy to check that $P(t) = \dfrac{d}{dt} \log H(t) = H'(t) E(-t)$, so we get
\begin{equation} \label{eq:relation_peh}
p_k(\xx) = e_1(\xx) h_{k-1}(\xx) - 2 e_2(\xx) h_{k-2}(\xx) + \cdots + (-1)^{k-1} k e_k(\xx).
\end{equation}
for $k>0$.

The most significant basis for \( \Sym \) is the basis of \emph{Schur functions}.
For a partition \( \lambda \), the Schur function \( s_\lambda(\xx) \)
is defined to be
\begin{equation} \label{eq:schur_ordinary_lattice_path}
s_\lambda(\xx) := \det (e_{\lambda'_i-i+j}(\xx))_{i,j=1}^{\lambda_1} = \det
\begin{pmatrix}
e_{\lambda'_1}(\xx) & e_{\lambda'_1+1}(\xx) & e_{\lambda'_1+2}(\xx) & \cdots \\
e_{\lambda'_2-1}(\xx) & e_{\lambda'_2}(\xx) & e_{\lambda'_2+1}(\xx) & \cdots \\
\vdots & \vdots &   &  \ddots
\end{pmatrix},
\end{equation}
where \( \lambda' \) is the conjugate of \( \lambda \).
The identity \eqref{eq:schur_ordinary_lattice_path} is called the \emph{dual
Jacobi--Trudi identity.}
Although not immediately apparent from the definition, it is indeed true that
Schur functions form a basis.
In addition, Schur functions have a combinatorial description. To give the
description, we need some combinatorial notions. A \emph{Young tableau}
of shape \( \lambda \) is a filling of the Young diagram of \( \lambda \)
with positive integers. We may think of a Young tableau $T$ of shape
$\lambda$ as the array $(T_{ij})$ of positive integers of shape $\lambda$ such
that $T_{ij}$ is the integer placed in the box $(i,j)$ in the matrix convention.
A Young tableau $T$ is called \emph{semistandard} if each row is weakly increasing
from left to right, and each column is strictly increasing from top to bottom.
Schur functions $s_\lambda(\xx)$ can be written purely combinatorially via
semistandard Young tableaux:
\begin{equation} \label{eq:schur_ordinary_tableau}
s_\lambda(\xx) = \sum_{T} x^T,
\end{equation}
where the sum ranges over all semistandard Young tableaux of shape $\lambda$ and $x^T=\prod_i x_i^{m_i(T)}$ where $m_i(T)$ is the number of appearances of $i$ in $T$.
There is a beautiful combinatorial proof via non-intersecting lattice paths that
two definitions \eqref{eq:schur_ordinary_lattice_path} and
\eqref{eq:schur_ordinary_tableau} are equivalent. We do not write down the
proof here, but we will revisit this method in Section~\ref{subsec:Noncomm_Schur}.

For a basis $\{b_\lambda(\xx)\}_{\lambda\in\Par}$ for $\Sym$,
a symmetric function $g(\xx)\in\Sym$ is said to be \emph{$b$-positive}
if the expansion of $g(\xx)$ in terms of the basis
$\{b_\lambda(\xx)\}_{\lambda\in\Par}$ has nonnegative coefficients.
Moreover, letting $\Sym_q = \mathbb{Q}[q]\otimes_\mathbb{Q} \Sym$,
$g(\xx)\in\Sym_q$ is said to be \emph{$b$-positive} if for any partition $\lambda$,
the coefficient of $b_\lambda(\xx)$ in the expansion of $g(\xx)$ with respect to
the basis $\{b_\lambda(\xx)\}_{\lambda\in\Par}$ is a polynomial in $q$ with
nonnegative coefficients.

We next introduce an involution $\omega$ on $\Sym$. Define an endomorphism
$\omega:\Sym\rightarrow\Sym$ by $\omega(e_k(\xx)) = h_k(\xx)$ for each $k\ge 1$,
and extend to $\Sym=\mathbb{Q}[e_1(\xx),e_2(\xx),\dots]$.
One can quickly check that $\omega$ is an involution,
i.e., $\omega^2 = id_{\Sym}$. Also, it is well known that
\[
\omega(e_\lambda(\xx)) = h_\lambda(\xx),\quad \omega(p_\lambda(\xx)) = (-1)^{|\lambda|-\ell(\lambda)} p_\lambda(\xx), \qand \omega(s_\lambda(\xx)) = s_{\lambda'}(\xx).
\]
The images $\omega(m_\lambda(\xx))$ of monomial symmetric functions are called
the \emph{forgotten symmetric functions} denoted by $f_\lambda(\xx)$.

Let $\yy=(y_1,y_2,\dots)$ be another set of commuting variables. Define
\[
C(\xx,\yy) := \prod_{i,j} \frac{1}{1-x_iy_j} \in\mathbb{Q}[[\xx,\yy]],
\]
called the \emph{Cauchy product}. From the definition of $C(\xx,\yy)$, we have
\begin{equation} \label{eq:comm_Cauchy_mh}
C(\xx,\yy) = \prod_{j\ge 1} \sum_{\ell\ge 0} x_j^\ell h_\ell(\yy)
= \sum_{\lambda\in\Par} m_\lambda(\xx) h_\lambda(\yy).
\end{equation}
Among many results related to $C(\xx,\yy)$, what we need is a list of various
expressions. We state them in here without proofs (see \cite{EC2}):

\begin{align}
C(\xx,\yy)
&= \sum_\lambda f_\lambda(\xx) e_\lambda(\yy) \label{eq:comm_Cauchy_ef} \\
&= \sum_\lambda \frac{1}{z_\lambda} p_\lambda (\xx) p_\lambda(\yy) \label{eq:comm_Cauchy_pp} \\
&= \sum_\lambda s_\lambda(\xx) s_\lambda(\yy) \label{eq:comm_Cauchy_ss} \\
&= \sum_\lambda h_\lambda(\xx) m_\lambda(\yy), \label{eq:comm_Cauchy_hm}
\end{align}
where each summation runs over all partitions, and $z_\lambda = \prod_{i\ge 1} i^{m_i} m_i !$ where $m_i$ is the number of parts of $\lambda$ which are equal to $i$.

We turn our attention to quasisymmetric functions. A \emph{quasisymmetric function} is a bounded degree formal power series in $\mathbb{Q}[[\xx]]$ such that for all compositions $\alpha=(\alpha_1,\dots,\alpha_\ell)$, the coefficient of $\prod x_i^{\alpha_i}$ equals the coefficient of $\prod x_{i_j}^{\alpha_i}$ for all $i_1<\dots <i_\ell$. Let $\QSym_n$ denote the space of quasisymmetric functions of degree $n$, and $\QSym = \bigoplus_{n\ge 0} \QSym_n$. By definition, $\Sym_n\subset\QSym_n$ and $\Sym\subset\QSym$.
For a composition $\alpha=(\alpha_1,\dots,\alpha_\ell)$, define the \emph{monomial quasisymmetric function} $M_{\alpha}(\xx)$ by
\[
  M_{\alpha}(\xx) := \sum_{i_1<\dots<i_\ell} x_{i_1}^{\alpha_1}\cdots x_{i_\ell}^{\alpha_\ell}.
\]
Then $\{ M_\alpha(\xx)\in\QSym_n \mid \alpha\vDash n\}$ forms a basis for
$\QSym_n$. Another basis for \( \QSym \) is the basis of
\emph{fundamental quasisymmetric functions}.
For a composition \( \alpha\vDash n \), the fundamental quasisymmetric
function $F_\alpha(\xx)$ is defined as
\[
  F_{\alpha}(\xx) := \sum_{\substack{i_1\le\cdots\le i_n \\ j\in S(\alpha)\Rightarrow i_j<i_{j+1}}} x_{i_1} x_{i_2}\cdots x_{i_n},
\]
often denoted by $F_{n,S(\alpha)}(\xx)$, where \( S \) is the map from
compositions to subsets of \( [n-1] \) defined in the previous section.
We can extend the involution $\omega$ to $\QSym$ as follows: for a subset $S\subset [n-1]$, $\omega F_{n,S}(\xx) := F_{n,[n-1]\setminus S}(\xx)$.

\subsection{Chromatic quasisymmetric functions}
In this paper, we deal with a slight generalization of chromatic quasisymmetric
functions introduced in Section~\ref{sec:intro}.
Let $G$ be a simple graph on the vertex set $[n]$,
and \( \mu=(\mu_1,\mu_2,\dots,\mu_n)\in\mathbb{N}^n \).
A \emph{proper multi-coloring} $\kappa$ of $G$ of type $\mu$ is a function from
$[n]$ to the collection of all finite subsets of $\mathbb{P}$ such that for each
$i\in [n]$, $| \kappa(i) | = \mu_i$ and $\kappa(j)\cap\kappa(k)=\emptyset$
whenever $\{j,k\}\in E(G)$.
The \emph{(multi-)chromatic quasisymmetric function} $X_G(\xx,q;\mu)$ of $G$
is given by
\[
  X_G(\xx,q;\mu) = \sum_{\kappa} q^{\asc_G(\kappa)} x^{\kappa},
\]
where the sum is over all proper multi-colorings $\kappa$ of type $\mu$,
$x^{\kappa} = \prod_{i=1}^n \prod_{k\in\kappa(i)} x_k$ and
$\asc_G(\kappa) = | \{ ((i,r),(j,s)) \mid \{i,j\}\in E(G),~ i<j,~ r\in\kappa(i),~ s\in\kappa(j),~ r<s \} |$.
See Figure~\ref{fig:multi_coloring}.
The multi-coloring generalization \( X_G(\xx,1;\mu) \) of chromatic symmetric
functions was introduced in \cite{Gasharov1996, Stanley1998}.
Obviously, $X_G(\xx,q) = X_G(\xx,q;(1^n))$.
\begin{figure}
\centering
\includegraphics[width=0.275\linewidth]{./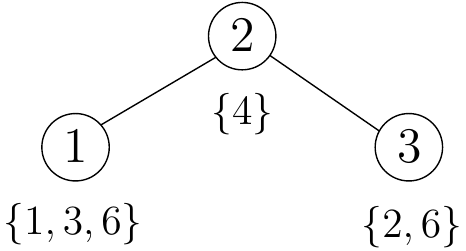}
\caption{For a graph $G=1-2-3$, let $\kappa$ be the proper multi-coloring of $G$ given by $\kappa(1)=\{1,3,6\}$, $\kappa(2)=\{4\}$ and $\kappa(3)=\{2,6\}$. Then $x^\kappa=x_1 x_2 x_3 x_4 x_6^2$ and $\asc_G(\kappa)=3$.} \label{fig:multi_coloring}
\end{figure}

In general, $X_G(\xx,q;\mu)$ is a quasisymmetric function, but when $G$ is the
incomparability graph of a natural unit interval order, $X_G(\xx,q;\mu)$ becomes
a symmetric function (Theorem~\ref{thm:X_P=Omega}).
Moreover, when $X_G(\xx,q;\mu)$ is symmetric, the symmetric function can be
expressed in an alternative way.
For a proper multi-coloring \( \kappa \) of \( G \), let
$\des_G(\kappa) =  | \{ ((i,r),(j,s)) \mid \{i,j\}\in E(G),~ i<j,~ r\in\kappa(i),~ s\in\kappa(j),~ r>s \} |$.
Then we have the following proposition, which follows immediately from the symmetry
of \( X_G(\xx,q;\mu) \).
\begin{prop}
If $X_G(\xx,q;\mu)$ is symmetric, then
\[
X_G(\xx,q;\mu) = \sum_{\kappa} q^{\des_G(\kappa)} x^\kappa,
\]
where $\kappa$ ranges over all proper multi-colorings of type $\mu$.
\end{prop}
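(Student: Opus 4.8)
The plan is to exploit the symmetry of $X_G(\xx,q;\mu)$ together with an explicit sign-preserving involution on proper multi-colorings. Observe first that by definition
\[
X_G(\xx,q;\mu) = \sum_{\kappa} q^{\asc_G(\kappa)} x^\kappa
= \sum_{\lambda} \left( \sum_{\kappa:\, x^\kappa = x^\lambda \text{ up to reindexing}} q^{\asc_G(\kappa)} \right) m_\lambda(\xx) \cdot (\text{correction for multiplicities}),
\]
so the hypothesis that $X_G(\xx,q;\mu)$ is symmetric says precisely that the generating polynomial $\sum_{\kappa} q^{\asc_G(\kappa)}$, taken over proper multi-colorings $\kappa$ of type $\mu$ whose underlying monomial is a fixed $x^\lambda$, depends only on $\lambda$ and not on which particular rearrangement of the exponents we read off. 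The cleanest way to package this: a power series $\sum_\kappa q^{c(\kappa)} x^\kappa$ is symmetric if and only if for every pair of adjacent color values $k, k+1$ there is a weight-preserving bijection on multi-colorings that swaps the roles of $k$ and $k+1$ while preserving the statistic $c$. So I would first extract from the symmetry hypothesis such a family of bijections for the ascent statistic $\asc_G$.

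Next I would reverse the entire palette. Fix a large $N$ bounding all colors used, and for a proper multi-coloring $\kappa$ define $\bar\kappa$ by $\bar\kappa(i) = \{\, N+1-r : r \in \kappa(i)\,\}$. This is again a proper multi-coloring of the same type $\mu$ (disjointness is preserved), and the map $\kappa \mapsto \bar\kappa$ is an involution. Under it, an ascent becomes a descent: if $\{i,j\}\in E(G)$, $i<j$, $r\in\kappa(i)$, $s\in\kappa(j)$, then $r<s$ iff $N+1-r > N+1-s$, so $\asc_G(\kappa) = \des_G(\bar\kappa)$. Meanwhile $x^{\bar\kappa}$ is the monomial obtained from $x^\kappa$ by the reindexing $x_r \mapsto x_{N+1-r}$. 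Therefore
\[
\sum_\kappa q^{\asc_G(\kappa)} x^\kappa
= \sum_\kappa q^{\des_G(\bar\kappa)} x^\kappa
= \sum_\kappa q^{\des_G(\kappa)} x^{\bar\kappa},
\]
and the last sum is the image of $\sum_\kappa q^{\des_G(\kappa)} x^\kappa$ under the variable-reversal substitution. Since the target $X_G(\xx,q;\mu)$ is symmetric by hypothesis, applying this variable-reversal substitution to it gives it back (a symmetric function is fixed by any permutation of variables, and stabilizing on finitely many variables suffices by taking $N\to\infty$). Hence $\sum_\kappa q^{\des_G(\kappa)} x^\kappa$ agrees with $X_G(\xx,q;\mu)$ after the substitution, and undoing the substitution yields the claim.

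To make this rigorous I need to be slightly careful about the inverse limit: the variables $\xx=(x_1,x_2,\dots)$ are infinite, so I would argue degree by degree, truncating to $x_1,\dots,x_N$ for $N$ large and using that reversal $x_r\mapsto x_{N+1-r}$ is then an honest permutation of the variables appearing in any fixed-degree homogeneous component, and that symmetric functions restrict to symmetric polynomials invariant under such permutations. The main obstacle is not any single hard computation but rather stating precisely what ``$X_G(\xx,q;\mu)$ is symmetric'' buys us: the cleanest route is the one above, where symmetry of the \emph{target} is used only at the very end to absorb the variable-reversal substitution, and the combinatorial content is entirely the palette-reversing involution $\kappa\mapsto\bar\kappa$, which trades $\asc_G$ for $\des_G$ and commutes with the type. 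I expect the proof to be short once this bookkeeping is set up.
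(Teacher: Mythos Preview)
Your core argument---the palette-reversal involution $\kappa\mapsto\bar\kappa$ that swaps $\asc_G$ for $\des_G$ while applying a finite permutation to the variables, followed by invoking the assumed symmetry of $X_G(\xx,q;\mu)$ to absorb that permutation---is correct and is exactly what the paper has in mind when it says the proposition ``follows immediately from the symmetry of $X_G(\xx,q;\mu)$.'' The truncation-to-$N$-variables bookkeeping you outline is the right way to make it rigorous.

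Your first paragraph, however, is a detour you should drop. You do not need to extract any family of $k\leftrightarrow k{+}1$ bijections from the symmetry hypothesis; indeed, symmetry only gives equality of coefficients, not canonical bijections, and nothing later in your argument uses them. As you yourself note at the end, symmetry is used only once, at the very last step, to kill the variable-reversal substitution. Cut the first paragraph and the proof is clean and short.
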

By the proposition, for a natural unit interval order $P$, we have
\[
X_P(\xx,q;\mu) = \sum_{\kappa} q^{\des_P(\kappa)} x^\kappa.
\]

\begin{rmk} \label{rmk:multi-chrom}
Our multi-chromatic generalization $X_G(\xx,q;\mu)$ is equal, up to a scalar
multiple, to $X_{G^\mu}(\xx,q)$ of $G^\mu$ where $G^\mu$ is the graph obtained from
$G$ by replacing each vertex $a\in [n]$ by a complete graph of size $\mu_a$.
More precisely, let $d=\mu_1+\cdots+\mu_n$. For a vertex $i$, let $a_i$ be an
integer such that $\mu_1+\dots+\mu_{a_i-1}+1 \le i \le \mu_1+\dots+\mu_{a_i}$.
Then $G^{\mu}$ denotes the graph whose vertex set is $[d]$ and two vertices $i$ and
$j$ are adjacent if either $a_i=a_j$ or $a_i$ and $a_j$ are adjacent in $G$.
Therefore one can easily see that
\[
X_{G^\mu}(\xx,q) = \prod_{i=1}^n [\mu_i]_q! X_G(\xx,q;\mu),
\]
where $[k]_q=1+q+\cdots+q^{k-1}$ and \( [k]_q! = [1]_q\cdots [k]_q \).
Furthermore, for a natural unit interval order \( P \), the graph \( P^\mu \)
is also the incomparability graph of a certain natural unit interval order.
For example, let \( P=P(2,3,3) \) and \( \mu=(1,1,2) \). In this case,
\( P^\mu=(2,4,4,4) \) and we have
\[
  X_{P^\mu}(\xx,q) = [2]_q! X_P(\xx,q;\mu).
\]
\end{rmk}

For a natural unit interval order $P$, expansions of $X_P(\xx,q)$ into certain bases have been studied in a number of papers using various methods; $F$-expansion~\cite{SW2016}, $p$-expansion~\cite{Athanasiadis2015}, $s$-expansion~\cite{Gasharov1996,SW2016}. Also the $e$-expansion of $X_P(\xx,q)$ of certain natural unit interval orders $P$ has been studied in \cite{CH2019, HNY2020, HP2019, SW2016}. We close this section with an explicit statement of the $e$-positivity conjecture.
\begin{conj}[\cite{SS1993,Stanley1995,SW2016}] \label{conj:original_e_positivity_conj}
For a natural unit interval order $P$ on $[n]$, let
\[
  X_P(\xx,q) = \sum_{\lambda\vdash n} c_\lambda(q) e_\lambda(\xx).
\]
Then each $c_\lambda(q)$ is a polynomial with nonnegative integer coefficients.
\end{conj}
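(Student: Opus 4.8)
This is the Stanley--Stembridge $e$-positivity conjecture, reformulated by Shareshian--Wachs and reduced to natural unit interval orders by Guay-Paquet; it is a long-standing open problem, so what follows is not a complete proof but the strategy underlying the partial results of this paper. The plan is to translate the question into the language of heaps and of noncommutative $P$-symmetric functions, and thereby reduce $e$-positivity to a commutation statement in a noncommutative ring. The first step (Section~\ref{sec:heap_local_flip}) is to encode each proper coloring $\kappa$ of $\inc(P)$ as a heap, and to introduce the \emph{local flip}, a move on heaps that generates an equivalence relation $\sim$ on proper colorings under which $\asc_P$ is constant. Grouping colorings by $\sim$-class gives $X_P(\xx,q) = \sum_{[\kappa]} q^{\asc_P(\kappa)} \sum_{\kappa' \in [\kappa]} x^{\kappa'}$, which is the combinatorial heart of the refinement (Conjecture~\ref{conj:refined_e-positivity}); the task is then to show that summing over $\sim$-classes assembles the monomials into $e_\lambda$'s with nonnegative polynomial coefficients.

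In the spirit of Fomin--Greene and Blasiak--Fomin, the second step is to build the ring of noncommutative $P$-symmetric functions (Section~\ref{sec:Noncomm}): a quotient of the free algebra on the alphabet $[n]$ by relations encoding the incomparabilities of $P$ and the local flips, together with noncommutative analogues $\hh_\lambda,\ee_\lambda,\pp_\lambda,\mono_\lambda,\sch_\lambda$ of the classical bases. The key device is a noncommutative Cauchy identity yielding a perfect pairing between $X_P(\xx,q)$ and this ring, so that for each $\lambda\vdash n$ the coefficient $c_\lambda(q)$ is realized as the pairing of $X_P(\xx,q)$ against the noncommutative analogue $\ff_\lambda$ of the forgotten symmetric function --- using the commutative expansion $C(\xx,\yy)=\sum_\lambda f_\lambda(\xx)e_\lambda(\yy)$ from \eqref{eq:comm_Cauchy_ef} --- hence as a $q^{\asc_P}$-weighted count of heaps compatible with $\lambda$. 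Nonnegativity and integrality of $c_\lambda(q)$ then follow once one knows this count is of genuine combinatorial objects, which in turn reduces to the statement that the noncommutative elementary functions $\ee_1,\ee_2,\dots$ pairwise commute modulo the defining relations, so that $\ee_\lambda$ depends only on the multiset of indices. The same duality, applied with the noncommutative monomial, power-sum, and Schur functions in place of $\ff_\lambda$, produces the $F$-, $p$-, and $s$-positive expansions of Theorems~\ref{thm:f-positive}, \ref{thm:p-positive}, and \ref{thm:s-positive}, the Schur case already resting on a Jacobi--Trudi-type identity of the form \eqref{eq:schur_ordinary_lattice_path} inside this ring.

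The hard part --- and precisely why the conjecture is still open --- is the last reduction: to exhibit a set of local moves on heaps under which \emph{all} the noncommutative elementary functions commute, equivalently, to prove that the local-flip relations are rich enough to detect every $e_\lambda$. With the present machinery I expect only partial progress: one can establish $e$-positivity when $\lambda$ is a hook or has two columns (Section~\ref{sec:Noncomm_mono}) by analyzing the $2\times 2$ and hook minors of the determinant \eqref{eq:schur_ordinary_lattice_path} against explicit heaps, and one can prove in full generality the Harada--Precup recurrence for the $e$-coefficients (Theorem~\ref{thm:m=em}), showing combinatorially that the $c_\lambda(q)$ satisfy the relations predicted by Hessenberg geometry. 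A complete proof would require either a combinatorial certificate that local flips, suitably enlarged, force the commutation of the noncommutative $\ee_k$ for every shape --- strengthening Conjecture~\ref{conj:refined_e-positivity} --- or an essentially new insertion-type algorithm on proper colorings; both remain out of reach.
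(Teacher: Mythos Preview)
You correctly recognize that this is an open conjecture and that the paper offers a framework rather than a proof, and your high-level sketch of the heap/local-flip/noncommutative-Cauchy machinery is broadly accurate. However, you misidentify both the relevant noncommutative object and the actual obstruction.

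First, the $e$-coefficient $c_\lambda(q)$ of $X_P(\xx,q)$ is the $h$-coefficient of $\omega X_P(\xx,q)$, and in the paper's duality this is governed by the expansion $\Omega(\xx,\uu)\equiv\sum_\lambda h_\lambda(\xx)\,\mono_\lambda(\uu)$ (Proposition~\ref{prop:Omega=hm}); hence $c_\lambda(q)=\langle \mono_\lambda(\uu),\gamma_\mu\rangle$. The controlling object is the noncommutative $P$-\emph{monomial} function $\mono_\lambda(\uu)$, not a noncommutative forgotten function. (An expansion $\Omega\equiv\sum f_\lambda(\xx)\,\ee_\lambda(\uu)$ does exist, but pairing with it yields only $m$-positivity of $X_P$, which is trivial.)

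Second, and more seriously, you locate the difficulty in proving that the $\ee_k(\uu)$ pairwise commute modulo $\II_P$. That commutation is precisely Theorem~\ref{thm:e_k_commute}, and it is \emph{proved} in the paper; it is what makes $\Omega(\xx,\uu)$ symmetric in $\xx$ and hence makes the entire Cauchy framework function. Commutation alone does not yield $e$-positivity. What is genuinely missing is a \emph{positive $\uu$-monomial expression} for $\mono_\lambda(\uu)$ modulo $\II_P$ for arbitrary $\lambda$: the definition~\eqref{eq:def_mono} writes $\mono_\lambda(\uu)$ as an alternating sum of products $\ee_\mu(\uu)$, and the problem is to cancel the signs combinatorially. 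This is exactly what Theorems~\ref{thm:m_two_column} and~\ref{thm:m_hook} accomplish for two-column and hook shapes, not by analyzing Jacobi--Trudi minors but by explicit sign-reversing arguments on heaps; and it is what remains open in general.
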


\section{Heaps and local flips} \label{sec:heap_local_flip}
In this section, we review the definition of heaps and define
an operation, called a local flip, on heaps. This operation plays a central role
in this paper. The theory of heaps, invented by Viennot~\cite{Viennot1986},
has been developed for sets equipped with a symmetric and reflexive relation,
but in this paper we restrict this theory to natural unit interval orders.
For another approach via heap theory, we refer to \cite{BN2020}.

Fix a natural unit interval order $P$ on $[n]$, and a nonnegative integer sequence
$\mu=(\mu_1,\dots,\mu_n)\in\mathbb{N}^n$. Similar to $G^\mu$ in
Remark~\ref{rmk:multi-chrom}, let $P^\mu$ be the graph whose vertex set is
$\{ v_{a,i} \mid a\in [n], 1\le i\le \mu_a\}$, and $v_{a,i}$ and $v_{b,j}$
are adjacent if either $a=b$ or $a$ and $b$ are incomparable in $P$.
A \emph{heap $H$} of $P$ of type $\mu$ is an acyclic orientation of $P^\mu$ satisfying that for each $a\in [n]$ and $1\le i< j\le \mu_a$,
the direction on the edge between $v_{a,i}$ and $v_{a,j}$ is toward $v_{a,i}$.
We call a vertex of a heap a \emph{block}, and denote the set of heaps of $P$ of type $\mu$ by $\HH(P,\mu)$.
It is worth noting that a heap of type \( (1^n) \) is just an acyclic orientation
of the graph \( P \). This parallels the fact that a proper multi-coloring of
type \( (1^n) \) is an ordinary proper coloring; see also
Remark~\ref{rmk:multi-chrom}.
We identify each heap \( H \) of \( P \) with the following diagram:
take $n$ unit intervals described in Section~\ref{subsec:NUIO}.
Let \( v=v_{a,1} \) be a sink of \( H \). Then stack a unit-length block on the
\( a \)th interval, and delete the vertex \( v \) from \( H \).
Again, choose a sink of \( H-v \), stack a unit-length block on the corresponding
interval, and delete the sink from \( H-v \). By repeating this process until
all vertices are deleted, we obtain the desired diagram.
For example, a heap of $P=P(2,4,5,5,5)$ of type $(3,1,0,2,2)$ and
the corresponding diagram are depicted in Figure~\ref{fig:ex_heap}.
\begin{figure}
\centering
\includegraphics[width=0.87\textwidth]{./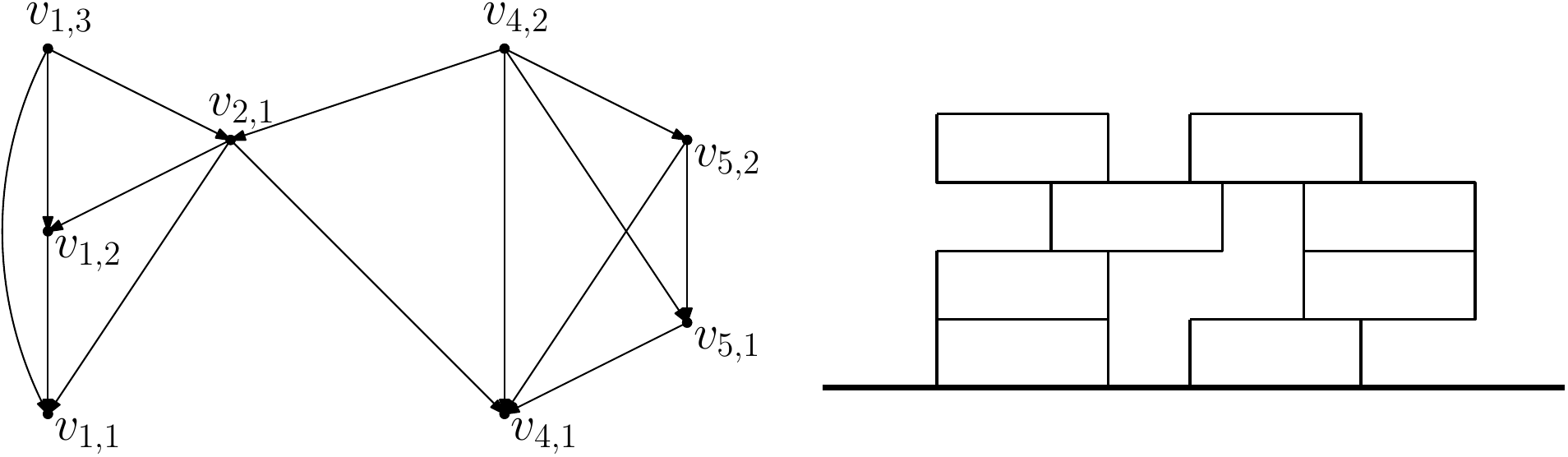} 
\caption{A heap of $P(2,4,5,5,5)$ of type $(3,1,0,2,2)$ and the corresponding diagram.} \label{fig:ex_heap}
\end{figure}

Each proper multi-coloring $\kappa$ of $P$ of type $\mu$ gives us a heap of $P$ of type $\mu$. For each vertex $a\in V(P)$, list its colors $\kappa(a)=\{c_{a,1}< \dots < c_{a,\mu_a}\}$, and for each edge $\{v_{a,i},v_{b,j}\}$ assign the direction $v_{a,i}\leftarrow v_{b,j}$ if $c_{a,i}<c_{b,j}$. Then this orientation is acyclic, and thus we have the heap of type $\mu$ corresponding to $\kappa$.
To see this diagrammatically, consider again $n$ unit intervals on the real line. For each $a\in [n]$ and $c\in\kappa(a)$, place a block at the position of height $c$ above the $a$th interval.
Due to the properness of \( \kappa \), there is no overlap between any pair of blocks. Once all the blocks are placed, let them descend as far as gravity allows. The resulting diagram precisely corresponds to the heap.
Consider Figure~\ref{fig:kappa} as an illustrative example.
Let $P=P(2,3,3)$ and $\kappa$ be a proper multi-coloring of $P$ of type $(3,1,2)$:
$\kappa(1) = \{1,3,6\}$, $\kappa(2) = \{4\}$, $\kappa(3) = \{2,6\}$, as depicted on the left. The corresponding heap is shown on the right.
\begin{figure}
\centering
\subfigure[]{
    \begin{tikzpicture}[scale=.45]
        \draw (0,0) rectangle ++(\RECTX,\RECTY);
        \draw (0,2) rectangle ++(\RECTX,\RECTY);
        \draw (0,5) rectangle ++(\RECTX,\RECTY);
        \draw (1.5,3) rectangle ++(\RECTX,\RECTY);
        \draw (3,1) rectangle ++(\RECTX,\RECTY);
        \draw (3,5) rectangle ++(\RECTX,\RECTY);
        \draw [thick] (-1,0) -- (6.5,0);
    \end{tikzpicture}
\label{fig:kappa_1}
}
\qquad
\subfigure[]{
    \begin{tikzpicture}[scale=.5]
        \draw (0,0) rectangle ++(\RECTX,\RECTY);
        \draw (0,1) rectangle ++(\RECTX,\RECTY);
        \draw (0,3) rectangle ++(\RECTX,\RECTY);
        \draw (1.5,2) rectangle ++(\RECTX,\RECTY);
        \draw (3,0) rectangle ++(\RECTX,\RECTY);
        \draw (3,3) rectangle ++(\RECTX,\RECTY);
        \draw [thick] (-1,0) -- (6.5,0);
    \end{tikzpicture}
\label{fig:kappa_2}
}
\caption{}
\label{fig:kappa}
\end{figure}

In addition, we can regard heaps as posets. By denoting a partial order on \( H \)
by \( \prec \), we define $v_{a,i}\prec v_{b,i}$ whenever $v_{a,i}\leftarrow v_{b,j}$, and then take their transitive closure.
From this point of view, a proper multi-coloring $\kappa$ of $P$ corresponding to $H$ can be thought of as a strict order-preserving map from $H$ to $\mathbb{P}$, that is, $\kappa:H\rightarrow\mathbb{P}$ satisfying $\kappa(v_{a,i}) < \kappa(v_{b,j})$ whenever $v_{a,i}\prec v_{b,j}$ in $H$. Hence the theory of $(P,\omega)$-partitions helps us study the chromatic quasisymmetric functions. 
First, we review this theory briefly.
(Previously, the letters $P$ and $\omega$ have been assigned to natural unit interval orders and the involution on $\Sym$, respectively. To prevent confusion, we use $(Q,\sigma)$ instead of the more conventional notation $(P,\omega)$.)
Let $Q$ be an arbitrary poset consisting of $n$ elements and $\sigma$ a bijection $\sigma:Q\rightarrow [n]$, called a \emph{labeling}. For a function $f:Q\rightarrow\mathbb{P}$, let $x^f = \prod_{t\in Q} x_{f(t)}$.
Then define the $(Q,\sigma)$-partition generating function by
\[
  K_{Q,\sigma}(\xx) = \sum_f x^f,
\]
where $f$ ranges over all functions $f:Q\rightarrow [n]$ satisfying
\begin{enumerate}[label=(\roman*)]
\item if $s\le t$ in $Q$, then $f(s)\le f(t)$,
\item if $s<t$ in $Q$ and $\sigma(s)>\sigma(t)$, then $f(s)<f(t)$.
\end{enumerate}
By definition, when $\sigma$ is order-preserving (respectively, order-reversing), $K_{Q,\sigma}(\xx)$ is the generating function for order-preserving (respectively, strictly order-preserving) maps of $P$. For simplicity, we write $K_Q(\xx)=K_{Q,\sigma}(\xx)$ for some order-preserving labeling $\sigma$, and $\overline{K}_Q(\xx) = K_{Q,\tau}(\xx)$ for some order-reversing labeling $\tau$.
By definition, $K_{Q,\sigma}(\xx)$ is a quasisymmetric function, and due to Stanley~\cite{EC2}, $K_Q(\xx) = \omega \overline{K}_Q(\xx)$. Let $\LL(Q,\sigma)$ be the set of permutations $\pi=\pi_1\dots\pi_n$ of $[n]$ such that the map $w:Q\rightarrow [n]$ defined by $w(\sigma^{-1}(\pi_i))=i$ is a linear extension of $Q$.
\begin{thm}[\cite{EC2}] \label{thm:Pw-partition}
We have
\[
K_{Q,\sigma}(\xx) = \sum_{\pi\in\LL(Q,\sigma)} F_{n,\Des(\pi)}(\xx),
\]
where $\Des(\pi)=\{i\in [n-1]\mid \pi_i > \pi_{i+1} \}$.
\end{thm}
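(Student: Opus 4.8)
The plan is to organize the functions $f:Q\to\mathbb{P}$ counted by $K_{Q,\sigma}(\xx)$ according to which permutation $\pi\in\LL(Q,\sigma)$ they are ``compatible'' with, and to show that the functions compatible with a fixed $\pi$ contribute exactly $F_{n,\Des(\pi)}(\xx)$. First I would set up the bookkeeping: given $f:Q\to\mathbb{P}$ satisfying conditions (i) and (ii), list the elements of $Q$ as $t_1,\dots,t_n$ so that $f(t_1)\le f(t_2)\le\cdots\le f(t_n)$, breaking ties by the rule that whenever $f(t_k)=f(t_{k+1})$ we require $\sigma(t_k)<\sigma(t_{k+1})$. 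The key claim is that this tie-breaking can always be carried out consistently (i.e.\ the relation is a total order refining the constraints) and that the resulting word $\pi = \sigma(t_1)\sigma(t_2)\cdots\sigma(t_n)$ lies in $\LL(Q,\sigma)$, i.e.\ the map $w$ with $w(\sigma^{-1}(\pi_i))=i$ is a linear extension of $Q$. Conversely, I would check that every $f$ counted by $K$ arises from exactly one such $\pi$.

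Next I would identify, for a fixed $\pi\in\LL(Q,\sigma)$, exactly which $f$ are associated to it. Writing $\pi_i=\sigma(t_i)$, the associated $f$ are precisely those with $f(t_1)\le f(t_2)\le\cdots\le f(t_n)$ where the inequality at position $i$ is forced to be strict exactly when $\sigma(t_i)>\sigma(t_{i+1})$, i.e.\ when $i\in\Des(\pi)$. The nontrivial point here is that conditions (i) and (ii) on $f$ relative to $Q$, once we know $f$ is weakly increasing along the chain $t_1\prec\cdots\prec t_n$ in the order given by $\pi$, reduce \emph{exactly} to the strict-ascent-at-$\Des(\pi)$ condition and impose nothing more; this uses that $\pi$ is a linear extension of $Q$, so every relation $s<t$ in $Q$ corresponds to an earlier position preceding a later position in the list. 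Summing $x^f=x_{f(t_1)}\cdots x_{f(t_n)}$ over all such $f$ gives, by the very definition of $F_{n,\Des(\pi)}(\xx)$, the monomial
\[
  \sum_{\substack{i_1\le\cdots\le i_n\\ j\in\Des(\pi)\Rightarrow i_j<i_{j+1}}} x_{i_1}\cdots x_{i_n} = F_{n,\Des(\pi)}(\xx).
\]
Summing over $\pi\in\LL(Q,\sigma)$ then yields the claimed identity. (One small caveat to address: $K_{Q,\sigma}(\xx)$ as defined restricts to $f$ with values in $[n]$, but since every linear extension has at most $n$ distinct values the restriction is harmless when comparing with $F_{n,\Des(\pi)}$ truncated to $n$ variables; alternatively one works with all of $\mathbb{P}$ throughout, which is the standard convention.)

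The main obstacle is the well-definedness and bijectivity of the map $f\mapsto\pi$: one must verify that the tie-breaking rule produces a genuine total order (no circular constraints among elements with equal $f$-value) and that the resulting word is a linear extension of $Q$ rather than merely a rearrangement. The first follows because on a level set $f^{-1}(c)$ the only constraint is that $\sigma$ strictly increases, so we simply sort by $\sigma$; the second follows from condition (ii), which guarantees that $s<t$ in $Q$ with $s$ appearing after $t$ in the sorted list would force $f(s)<f(t)$, contradicting the ordering—hence $s$ precedes $t$, making $\pi$ a linear extension. Once this combinatorial correspondence is pinned down, the rest is the routine resummation above. This is of course Stanley's classical $(P,\omega)$-partition theorem, so I would cite \cite{EC2} for the full details while sketching the argument as above.
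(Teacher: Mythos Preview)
Your sketch is the standard argument and is correct. Note, however, that the paper does not actually supply a proof of this theorem: it is stated as a citation to Stanley~\cite{EC2} and used as a black box, so there is no in-paper proof to compare against. Your outline is essentially Stanley's original proof of the fundamental lemma of $(P,\omega)$-partitions.
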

We now apply Theorem~\ref{thm:Pw-partition} to the chromatic quasisymmetric functions. 
Let $W(\mu)$ be the set of all words of type $\mu$. For a heap $H$ of $P$ of type $\mu$, let $f:H\rightarrow [d]$ be a linear extension of $H$ where $d=\mu_1+\cdots+\mu_n=|H|$. Then define the word $\ww_f=\ww_{f,1}\cdots\ww_{f,d}$ by $\ww_{f,k} = a$ if $f^{-1}(k) = v_{a,i}\in H$ for some $i$. Then $\ww_f$ is of type $\mu$.
Let
\[
W(H) = \{\ww_f\in W(\mu) \mid \mbox{$f$ is a linear extension of $H$} \}.
\]
Then $\ww\in W(H)$ if and only if when blocks are piled on the $\ww_i$th interval
in order, the resulting diagram corresponds to $H$. Of course, $W(\mu)$ is the
disjoint union of $W(H)$'s, i.e., $W(\mu) = \bigsqcup_H W(H)$ where $H$ ranges
over all heaps of type $\mu$.
\begin{exam} \label{ex:W(H)}
Let $P=P(2,3,3)$ and $H$ be the heap depicted in Figure~\ref{fig:kappa_2}.
Let $f$ be a linear extension of $H$ given by $(f^{-1}(1),\dots,f^{-1}(6)) = (v_{1,1}, v_{3,1}, v_{1,2}, v_{2,1}, v_{1,3}, v_{3,2})$.
Then $\ww_f = \mathsf{131213}$. Also one can easily see
\[
W(H) = \{\mathsf{113213, 131213, 311213, 113231, 131231, 311231}\}.
\]
\end{exam}

For a heap $H$, an edge between $v_{a,i}$ and $v_{b,j}$ is \emph{ascent}
if the edge is toward $v_{a,i}$ and $a>b$ in the natural order.
Let $\asc_P(H)$ denote the number of ascent edges in $H$. We note that for a
multi-coloring $\kappa$, $\des_P(\kappa)=\asc_P(H)$ where $H$ is the heap
corresponding to $\kappa$.

\begin{thm} \label{thm:F_expan}
  We have
  \begin{align*}
    \omega X_P(\xx,q;\mu)
    &= \sum_{H\in \HH(P,\mu)} q^{\asc_P(H)} K_H(\xx) \\
    &= \sum_{\ww\in W(\mu)} q^{\inv_P(\ww)} F_{d,\Des_P(\ww)}(\xx).
  \end{align*}
\end{thm}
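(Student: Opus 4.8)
The plan is to prove the two equalities in Theorem~\ref{thm:F_expan} in sequence, using the heap description of proper multi-colorings together with the $(P,\omega)$-partition machinery just recalled. First I would unpack the definition $\omega X_P(\xx,q;\mu) = \omega\sum_\kappa q^{\des_P(\kappa)} x^\kappa$, where $\kappa$ ranges over proper multi-colorings of type $\mu$. The key observation, already set up in the text, is that the set of proper multi-colorings of $P$ of type $\mu$ decomposes according to the heap they induce: for each heap $H\in\HH(P,\mu)$, the colorings $\kappa$ inducing $H$ are exactly the strict order-preserving maps $\kappa:H\to\mathbb{P}$ (viewing $H$ as a poset via $\prec$). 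Since $\des_P(\kappa)=\asc_P(H)$ depends only on $H$, we may pull $q^{\asc_P(H)}$ out of the inner sum, so that
\[
X_P(\xx,q;\mu) = \sum_{H\in\HH(P,\mu)} q^{\asc_P(H)} \sum_{\kappa} x^\kappa,
\]
where the inner sum is over strict order-preserving maps $\kappa:H\to\mathbb{P}$. That inner sum is by definition $\overline{K}_H(\xx)$ (the generating function for strictly order-preserving maps, i.e.\ $K_{H,\tau}$ for an order-reversing labeling $\tau$). Applying $\omega$ and using $\omega\overline{K}_H(\xx) = K_H(\xx)$ from Stanley's theory gives the first equality.

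For the second equality I would apply Theorem~\ref{thm:Pw-partition} to each $K_H(\xx)$. Choosing an order-preserving labeling $\sigma$ of $H$, the theorem expresses $K_H(\xx)=\sum_{\pi\in\LL(H,\sigma)} F_{d,\Des(\pi)}(\xx)$. The remaining work is to translate this into a sum over words: I would exhibit a bijection between $\LL(H,\sigma)$ and the set $W(H)$ of words arising from linear extensions of $H$, under which $\Des(\pi)$ corresponds to $\Des_P(\ww)$. Concretely, a linear extension $f$ of $H$ yields the word $\ww_f$ as defined in the text, and one checks that $i\in\Des_P(\ww_f)$ — i.e.\ $(\ww_f)_i >_P (\ww_f)_{i+1}$ — precisely captures the descent condition coming from the labeling $\sigma$; this is essentially the statement that $\ww_i >_P \ww_{i+1}$ forces the corresponding two blocks to be comparable in $H$ with the later one below, which is exactly when the labeling registers a descent. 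Summing over all $H$ and using $W(\mu)=\bigsqcup_H W(H)$ collapses the double sum into $\sum_{\ww\in W(\mu)}$. Finally one checks $\asc_P(H)=\inv_P(\ww)$ for every $\ww\in W(H)$: an ascent edge of $H$ toward $v_{a,i}$ with $a>b$ corresponds to a pair of positions in $\ww$ carrying incomparable letters $a>b$ with $a$ appearing later, which is a $P$-inversion pair, and conversely; since both quantities are heap invariants this matching is clean.

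The main obstacle I expect is the careful bookkeeping in the word-to-linear-extension correspondence, specifically verifying that $\Des(\pi)$ under the chosen labeling $\sigma$ matches $\Des_P(\ww)$ and that $\inv_P(\ww)$ is well-defined on each $W(H)$ (independent of which linear extension is chosen). The subtlety is that a word $\ww\in W(H)$ records only the sequence of intervals on which blocks are stacked, not the blocks themselves, so one must check that two letters $\ww_i,\ww_j$ with $i<j$ form a $P$-inversion pair if and only if the corresponding blocks $v_{\ww_i,\ast}$ and $v_{\ww_j,\ast}$ are joined by an ascent-type edge in $H$ — and that this does not depend on the linear extension realizing $\ww$. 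This comes down to the fact that incomparable letters in $P$ always produce an edge in $P^\mu$, whose orientation in $H$ is determined by the stacking order, whereas comparable letters never produce an edge; I would isolate this as a short lemma about how $\Des_P$ and $\inv_P$ of a word read off the combinatorics of its heap. Everything else — the decomposition of colorings by heap, the identity $\des_P(\kappa)=\asc_P(H)$, and the invocation of Theorems~\ref{thm:Pw-partition} and Stanley's $\omega\overline{K}_Q=K_Q$ — is already in place in the excerpt.
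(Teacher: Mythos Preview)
Your overall strategy matches the paper's proof exactly: decompose colorings by their heap, identify the inner sum as $\overline{K}_H(\xx)$, apply $\omega$, and then use the $(Q,\sigma)$-partition theorem to pass from $K_H(\xx)$ to fundamental quasisymmetric functions indexed by words. The first equality and the identity $\asc_P(H)=\inv_P(\ww)$ are handled correctly.

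There is, however, a genuine gap in your treatment of the second equality. You write ``choosing an order-preserving labeling $\sigma$ of $H$'' and then assert that the natural bijection $\LL(H,\sigma)\to W(H)$ sends $\Des(\pi)$ to $\Des_P(\ww)$. This correspondence does \emph{not} hold for an arbitrary order-preserving labeling. Your justification---that $\ww_i>_P\ww_{i+1}$ forces the corresponding blocks to be comparable in $H$---is backwards: if $\ww_i>_P\ww_{i+1}$ then the two letters are comparable in $P$, hence \emph{not} adjacent in $P^\mu$, so the blocks at positions $i$ and $i+1$ are \emph{incomparable} in $H$ (consecutive positions in a linear extension are either a cover relation or incomparable). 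For incomparable blocks $p,q$, whether $\sigma(p)>\sigma(q)$ is entirely up to the choice of $\sigma$. A tiny example: take $P=P(1,2,3)$ so $\inc(P)$ is edgeless, $\mu=(1,1,1)$, and label $\sigma(v_1)=2$, $\sigma(v_2)=1$, $\sigma(v_3)=3$; this $\sigma$ is (vacuously) order-preserving, but the linear extension giving $\ww=\mathsf{123}$ corresponds to $\pi=\mathsf{213}$, so $\Des(\pi)=\{1\}\neq\emptyset=\Des_P(\ww)$.

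What the paper does to repair this is construct a \emph{specific} labeling $\sigma$: repeatedly strip off the $<_P$-smallest minimal element of $H$ and label it next. With this greedy choice one can check that whenever two blocks $p=v_{a,\ast}$ and $q=v_{b,\ast}$ are incomparable in $H$, one has $\sigma(p)<\sigma(q)$ if and only if $a<_P b$; this is exactly what is needed to make $\Des(\pi)=\Des_P(\phi(\pi))$. You should isolate this construction and prove its key property, rather than treating the descent match as automatic.
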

In \cite{SW2016}, Shareshian and Wachs established the theorem for the case where \( \mu=(1^n) \).
\begin{proof}
The relation between proper multi-colorings and strict order-preserving maps on $H$ implies that
\[
X_P(\xx,q;\mu) = \sum_{H\in\HH(P,\mu)} q^{\asc_P(H)} \overline{K}_{H}(\xx).
\]
Applying $\omega$ on both sides gives the first equation.
It is easy to check that $\asc_P(H) = \inv_P(\ww)$ for all $\ww\in W(H)$.
To prove the second equation, it is therefore enough to show that for any heap $H$,
\[
K_H(\xx) = \sum_{\ww\in W(H)} F_{d,\Des_P(\ww)}(\xx).
\]
Regarding $H$ as a poset, we construct a labeling $\sigma$ of $H$ as follows.
First, list all minimal elements $\{v_{a_1,1}, v_{a_2,1},\dots \}$ satisfying $a_1 <_P a_2 <_P \cdots$. Then label $v_{a_1,1}$ with $1$, and let $H'$ be the heap obtained from $H$ by removing the vertex $v_{a_1,1}$ and edges attached to $v_{a_1,1}$. Again list all minimal elements of $H'$, and label the smallest element (with respect to the ordering $<_P$ on $P$) with 2. Repeat this process until all blocks of $H$ are labeled. Figure~\ref{fig:labeling} might help in understanding the
labeling \( \sigma \). An integer in each block represents its label.
\begin{figure}[h]
\centering
\begin{tikzpicture}[scale=.5]
    \draw (0,0) rectangle ++(\RECTX,\RECTY);
    \draw (0,1) rectangle ++(\RECTX,\RECTY);
    \draw (0,3) rectangle ++(\RECTX,\RECTY);
    \draw (1.5,2) rectangle ++(\RECTX,\RECTY);
    \draw (3,0) rectangle ++(\RECTX,\RECTY);
    \draw (3,3) rectangle ++(\RECTX,\RECTY);

    \node at (1.25, 0.5) (1) {1};
    \node at (1.25, 1.5) (2) {2};
    \node at (4.25, 0.5) (3) {3};
    \node at (2.75, 2.5) (4) {4};
    \node at (1.25, 3.5) (5) {5};
    \node at (4.25, 3.5) (6) {6};

    \draw [thick] (-1,0) -- (6.5,0);
\end{tikzpicture}
\caption{}
\label{fig:labeling}
\end{figure}
\\
Then our labeling $\sigma$ is order-preserving by construction, so Theorem~\ref{thm:Pw-partition} yields that
\[
K_H(\xx) = \sum_{\pi\in\LL(H,\sigma)} F_{d,\Des(\pi)}(\xx).
\]
Define a map $\phi$ on $[d]$ by $\phi(k) = a$ if $\sigma^{-1}(k) = v_{a,i}$ for some $i$.
By abuse of notation we define a map $\phi$ from $\LL(H,\sigma)$ to $W(H)$: for each $\pi=\pi_1\dots\pi_d\in\LL(H,\sigma)$, define $\phi(\pi) = \phi(\pi_1)\phi(\pi_2)\dots\phi(\pi_d)\in W(H)$.
It is not hard to see that this map $\phi:\LL(H,\sigma)\rightarrow W(H)$ is bijective, and $\Des(\pi) = \Des_P(\phi(\pi))$, which completes the proof. For instance, let $H$ be the heap shown in Figure~\ref{fig:labeling}. Then
\[
\LL(H,\sigma) = \{\mathsf{123456, 132456, 312456, 123465, 132465, 312465}\}.
\]
Compare $\LL(H,\sigma)$ with $W(H)$ in Example~\ref{ex:W(H)}. Also we see that $\phi(\mathsf{132456}) = \mathsf{131213}$ and $\Des(\mathsf{132456}) = \{2\} = \Des_P(\mathsf{131213})$.
\end{proof}

Note that Theorem~\ref{thm:F_expan} holds not only for natural unit interval orders, but for arbitrary posets on $[n]$ under a suitable setting.

We are now in a position to define an important operation on heaps.
For any distinct blocks $p,q$ and $r$ in $H$, we call $(p,q,r)$ a
\emph{flippable triple} in $H$ if regrading \( H \) as a poset, one of the
following conditions hold:
\begin{enumerate}[label=(\roman*)]
\item $q$ covers $p$ and $r$;
\item $q$ is covered by $p$ and $r$.
\end{enumerate}
By definition, $p$ and $r$ are not adjacent.
Note that each block covers at most two blocks, and is covered by at most two
blocks because of the $\mathbf{(3+1)}$-freeness of $P$.
\begin{defn}
Let $H$ be a heap of $P$ and $(p,q,r)$ a flippable triple in $H$.
A \emph{local flip} at $(p,q,r)$ is reversing the directions on the edges
$\{p,q\}$ and $\{q,r\}$.
\end{defn}
The following lemma follows easily from the definition of flippable triples.
\begin{lem}
Let $H$ be a heap and $(p,q,r)$ a flippable triple in $H$.
Then the orientation $H'$ obtained from $H$ by a local flip at $(p,q,r)$ is
acyclic, so $H'$ is also a heap of the same type.
\end{lem}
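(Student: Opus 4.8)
The plan is to show that a local flip produces an acyclic orientation of the same graph $P^\mu$, and that the type constraint is preserved. Since a local flip only reverses the two edges $\{p,q\}$ and $\{q,r\}$ of a flippable triple $(p,q,r)$, and since $p$ and $r$ are non-adjacent (so there is no edge between them to worry about), the underlying graph is unchanged; and the type constraint concerns only edges between blocks $v_{a,i}, v_{a,j}$ over the same interval, which are never reversed because such a pair is always comparable in $H$ and hence never part of a flippable triple as the roles of $p,q,r$. So the only thing to verify is acyclicity.

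For acyclicity, I would argue by contradiction: suppose $H'$ contains a directed cycle $C$. Since $H$ is acyclic, $C$ must use at least one of the two reversed edges. First I would treat the defining case, say (i): in $H$, $q$ covers both $p$ and $r$, so $H$ has edges $p \to q$ and $r \to q$ (pointing toward $q$), and after the flip $H'$ has $q \to p$ and $q \to r$. A directed cycle in $H'$ through the edge $q \to p$ gives a directed path from $p$ back to $q$ in $H'$; this path cannot itself use $q\to p$ again, and if it avoids $q \to r$ as well then it is a directed path from $p$ to $q$ already present in $H$, contradicting that $q$ \emph{covers} $p$ (a cover relation admits no intermediate element, and more strongly, $p \prec q$ in $H$ already, so $q \not\prec p$, ruling out a path $p \to \cdots \to q$ unless... — here one must be a little careful: $p\prec q$ does not by itself forbid a $p\to\cdots\to q$ path, it \emph{is} such a path; the contradiction is rather that combining it with $q\to p$ in $H'$ would need that path to live in $H'$, but any $p$-to-$q$ path in $H'$ not using the two flipped edges lies in $H$, and prepending nothing, we would get $q \to p \to \cdots \to q$, a cycle using $q\to p$; so the path from $p$ to $q$ must use $q \to r$). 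Thus the returning path has the form $p \to \cdots \to q \to r \to \cdots \to q$, which already contains the sub-cycle $q \to r \to \cdots \to q$; but $q \to r$ is in $H'$ and the segment $r \to \cdots \to q$ avoids both flipped edges (it cannot reuse $q\to r$, and if it used $q\to p$ we would again have reached $q$ earlier), hence lies in $H$, so $r \prec q$ in $H$ together with a path $q \to \cdots \to r$ in $H$ contradicts acyclicity of $H$. Wait — $r \to \cdots \to q$ in $H$ means $r \prec q$ in $H$, which is consistent with $q$ covering $r$; the contradiction I actually want is that this path together with $q \to r \in H'$... no. Let me restructure: the clean statement is that any directed cycle in $H'$ induces, after deleting the flipped edges, a collection of directed paths in $H$ whose endpoints lie in $\{p,q,r\}$, and one checks case-by-case that no such path-system can be closed up using only the edges $q\to p$ and $q \to r$ (in case (i)) without forcing a directed path in $H$ from some element of $\{p,r\}$ to $q$ combined with $p \prec q$ or $r \prec q$, i.e. a cycle in $H$ — more precisely, the only potentially closing configurations require a directed path $p \rightsquigarrow r$ or $r \rightsquigarrow p$ in $H$, which would make $p$ and $r$ comparable in $H$, contradicting that they are non-adjacent in $P^\mu$ and hence never in a common relation. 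This last point — that $p,r$ non-adjacent forces them incomparable in $H$ — is the crux and follows since $H$ is an orientation of $P^\mu$, so $\prec$-comparability implies a directed path, implies (by taking consecutive pairs) adjacency of consecutive blocks, but not directly $p$–$r$ adjacency; so one instead notes a directed path $p \rightsquigarrow r$ in $H$ survives in $H'$ unchanged except possibly where it meets $q$, and gluing with $q \to p$ yields a cycle.

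The case (ii), where $q$ is covered by both $p$ and $r$, is entirely symmetric (reverse all arrows / pass to the dual poset), so I would dispose of it by that symmetry rather than repeating the argument.

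The main obstacle is organizing the cycle-chasing cleanly: a naive argument stumbles because $p \prec q$ in $H$ is itself a directed path and does not immediately contradict anything. The right bookkeeping is to observe that any directed cycle in $H'$, with the (at most two) flipped edges removed, breaks into directed paths of $H$ joining blocks among $\{p,q,r\}$, and then to enumerate the few ways these can be reassembled using $q\to p$ and $q\to r$; every such reassembly either reuses a flipped edge twice (impossible in a simple cycle) or produces a directed path in $H$ between $p$ and $r$, which is impossible since $p$ and $r$ are non-adjacent in $P^\mu$ and hence lie in distinct components of the comparability structure along that path — concretely, such a path would give, via the orientation $H$, a directed path in $H$ from $p$ to $q$ together with the $H$-relation making $(p,q,r)$ flippable, i.e. a directed cycle in $H$. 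Once this combinatorial skeleton is set up, each case is a one-line check. I expect the whole proof to be short.
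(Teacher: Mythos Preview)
The paper does not spell out a proof, saying only that the lemma follows easily from the definition of flippable triples. Your proposal contains the correct idea but you talk yourself out of it and end with a muddled argument.

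Here is the clean version in case (i), using your convention ($q$ covers $p$ means $p\to q$ in $H$, so after the flip $H'$ has $q\to p$ and $q\to r$). If $H'$ has a directed cycle, take a \emph{simple} one. Both flipped edges emanate from $q$, so a simple cycle---in which every vertex has out-degree exactly $1$---uses at most one of them, say $q\to p$. The remainder of the cycle is then a directed path from $p$ back to $q$ in $H'$ using neither flipped edge, hence lying entirely in $H$; and its length is at least $2$, since the unique $p$--$q$ edge in $H'$ is oriented $q\to p$. Thus there exists $x\neq p,q$ with $p\prec x\prec q$ in $H$, contradicting that $q$ \emph{covers} $p$. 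You state exactly this contradiction early on (``a cover relation admits no intermediate element'') but then abandon it, apparently conflating the existence of \emph{some} directed $p$-to-$q$ path in $H$ (harmless: the edge $p\to q$ is one) with the existence of one of length $\geq 2$ (which does contradict the cover relation, and which is forced here because the direct edge is unavailable in $H'$). Your final version, which seeks a directed $H$-path between $p$ and $r$ and declares it impossible because $p,r$ are non-adjacent in $P^\mu$, is off target on two counts: the case analysis never actually produces such a path (a simple cycle uses only one flipped edge, so $r$ need not appear at all), and non-adjacency in $P^\mu$ does not by itself preclude comparability in $H$.

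A smaller point: your justification for preservation of the type constraint (``such a pair is always comparable in $H$ and hence never part of a flippable triple'') does not work as stated, since $p$ and $q$ \emph{are} comparable yet form a flipped edge. The actual reason is that if $p$ and $q$ lay over the same interval $a$, then $r$ would have to be adjacent to $q$ yet non-adjacent to $p$; but adjacency to any $v_{a,\cdot}$ in $P^\mu$ depends only on the interval, so this is impossible.
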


As we can see from the example below, we can think of a local flip as an operation
on diagrams of heaps acting by transposing relative positions of
blocks as follows:
\begin{center}
  \includegraphics[width=0.45\linewidth]{./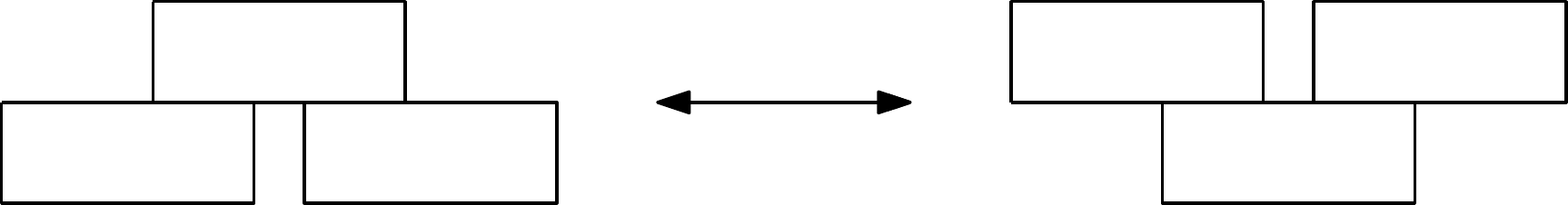}
\end{center}
\begin{exam} \label{ex:flip}
  Let $H$ be the heap shown in Figure~\ref{fig:ex_heap}.
  There are 4 flippable triples: $(v_{1,2}, v_{2,1}, v_{4,1}), (v_{1,3}, v_{2,1}, v_{4,2}), (v_{2,1}, v_{4,1}, v_{5,1})$ and $(v_{2,1}, v_{4,2}, v_{5,2})$.
  The following diagram is the heap obtained from $H$
  by a local flip at $(v_{2,1}, v_{4,2}, v_{5,2})$:
  \begin{center}
    \begin{tikzpicture}[scale=.5]
      \draw (0,0) rectangle ++(\RECTX,\RECTY);
      \draw (0,1) rectangle ++(\RECTX,\RECTY);
      \draw (0,4) rectangle ++(\RECTX,\RECTY);
      \draw (1.5,3) rectangle ++(\RECTX,\RECTY);
      \draw (3.5,0) rectangle ++(\RECTX,\RECTY);
      \draw (3.5,2) rectangle ++(\RECTX,\RECTY);
      \draw (5,1) rectangle ++(\RECTX,\RECTY);
      \draw (5,3) rectangle ++(\RECTX,\RECTY);
      \draw [thick] (-1,0) -- (8.5,0);
  \end{tikzpicture}
  \end{center}
\end{exam}

Using local flips, we can define an equivalence relation on the set of heaps of $P$ of type $\mu$: for two heaps $H,H'\in\HH(P,\mu)$, $H\sim H'$ if and only if $H'$ can be obtained from $H$ by applying a finite sequence of local flips. For instance, we illustrate all heaps of $P(2,3,4,5,5)$ of type $(1^5)$ and their equivalence relations in Figure~\ref{fig:heap_orbit_2345}.
\begin{figure}
\centering
\includegraphics[width=0.92\linewidth]{./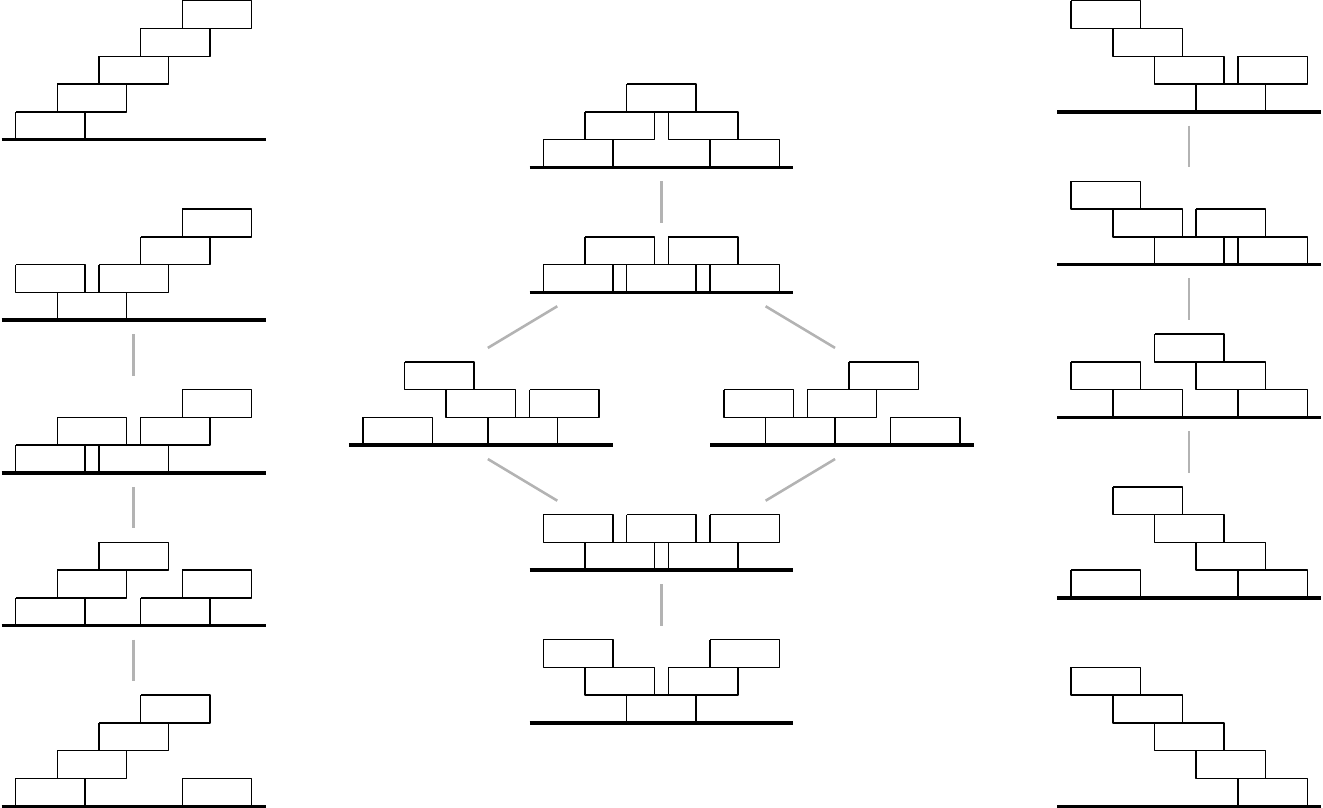}
\caption{All heaps of $P(2,3,4,5,5)$ of type $(1^5)$. A gray line between heaps means that they can be transformed to each other via a local flip. Hence each connected component represents an equivalence class.}
\label{fig:heap_orbit_2345}
\end{figure}

The following proposition and theorem tell us why local flips are crucial.
\begin{prop} \label{prop:ascent_number}
Local flips preserve the number of ascents, i.e., $\asc_P(H)=\asc_P(H')$ if $H\sim H'$.
\end{prop}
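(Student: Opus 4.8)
The plan is to reduce the statement to a single local flip and then to a local edge count. Since $\sim$ is generated by individual local flips, an induction on the length of a flip sequence reduces everything to showing $\asc_P(H)=\asc_P(H')$ when $H'$ comes from $H$ by one local flip at a flippable triple $(p,q,r)$. Such a flip reverses only the two edges $\{p,q\}$ and $\{q,r\}$, leaving every other edge of $P^{\mu}$ and its orientation untouched; hence $\asc_P(H)$ and $\asc_P(H')$ can differ only through the combined contribution of these two edges, and it suffices to check that the same number of them are ascents in $H$ as in $H'$. I will in fact show this number is always exactly $1$.

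Write $p=v_{a,i}$, $q=v_{b,j}$, $r=v_{c,k}$. The core of the argument is the claim that, after possibly interchanging $p$ and $r$ (conditions (i), (ii) and the local flip are all symmetric in $p$ and $r$), one has $a<b<c$. To establish this I would argue as follows. A covering relation in a heap must be realized by a genuine edge of $P^{\mu}$ — otherwise a directed path of length $\ge 2$ would yield an intermediate element — so $q$ is adjacent to both $p$ and $r$ in $P^{\mu}$, while $p$ and $r$ are non-adjacent (as noted just after the definition of flippable triples). Non-adjacency of $p$ and $r$ gives $a\ne c$ with $a,c$ comparable in $P$; fixing notation so that $a<_P c$, i.e.\ $m_a<c$, we get $a\le m_a<c$, hence $a<c$. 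Adjacency of $q$ with $p$ and with $r$ forces $b\notin\{a,c\}$ (if $b=a$ then $v_{a,j}$ would be adjacent to $v_{c,k}$, forcing $a=c$ or $a,c$ incomparable, a contradiction; symmetrically $b\ne c$), so $b$ is incomparable in $P$ to both $a$ and $c$. Now the hypothesis that $\mm$ is weakly increasing enters decisively: if $b<a$ then $m_b\le m_a<c$, so $b<_P c$, contradicting incomparability of $b,c$; and if $b>c$ then, since $a<b$, incomparability of $a,b$ forces $m_a\ge b>c$, contradicting $m_a<c$. Therefore $a<b<c$. This step is the one I expect to be the main obstacle, and it is precisely where being a natural unit interval order (as opposed to merely $\mathbf{(3+1)}$-free) is used.

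Finally I would read off the ascents from $a<b<c$, recalling that an edge is an ascent exactly when its arrow points at the endpoint carrying the larger label. In case (i) — $q$ covers $p$ and $r$, so both arrows point away from $q$ — the edge $\{p,q\}$ points at $p$ and $a<b$, so it is not an ascent, whereas $\{q,r\}$ points at $r$ and $c>b$, so it is an ascent: combined contribution $1$. After the flip we are in case (ii) — $q$ covered by $p$ and $r$, so both arrows point at $q$ — and now $\{p,q\}$ points at $q$ with $b>a$, an ascent, while $\{q,r\}$ points at $q$ with $b<c$, not an ascent: again combined contribution $1$. The identical computation handles the case where $H$ itself has shape (ii) and $H'$ has shape (i). Together with the first paragraph this yields $\asc_P(H)=\asc_P(H')$ for one flip, and then for all $H\sim H'$ by the inductive reduction. (One could also phrase the last paragraph via a word $\ww\in W(H)$ and the identity $\asc_P(H)=\inv_P(\ww)$, but the direct edge count above is shorter.)
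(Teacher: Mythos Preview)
Your proof is correct and follows essentially the same approach as the paper's: reduce to a single flip, note that only the two edges $\{p,q\}$ and $\{q,r\}$ change, establish $a<b<c$ (with $a\nless_P b\nless_P c$), and then check that exactly one of these two edges is an ascent both before and after the flip. The paper's version is terser---it simply asserts that ``one can easily see that $a<b<c$''---whereas you spell out the argument using the monotonicity of $\mm$; this is a welcome clarification but not a different route.
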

\begin{proof}
Let $(p,q,r)$ be a flippable triple in $H$. Also let $a,b,c$ be integers such that
$p=v_{a,i}$, $q=v_{b,j}$, $r=v_{c,k}$ for some $i,j,k$. We may assume that
$q\rightarrow p$, $q\rightarrow r$ in \( H \) and $a<_P c$.
Then one can easily see that $a<b<c$ in the natural order on $\mathbb{P}$,
and $a\nless_P b\nless_P c$.
Hence $q\rightarrow r$ contributes to $\asc_P(H)$, while $q\rightarrow p$ is not.
In $H'$, this two edges are reversed, so $p\rightarrow q$ contributes to
$\asc_P(H')$, while $r\rightarrow q$ is not. Other edges are unchanged,
so the number of ascents is preserved.
\end{proof}
For \( [H]\in\equivclass{\HH(P,\mu)} \), define \( \asc_P([H]):=\asc_P(H') \)
for any \( H'\in [H] \). The proposition guarantees the well-definedness.

\begin{thm} \label{thm:X_P_sym}
  Let $[H]$ be an equivalence class in $\equivclass{\HH(P,\mu)}$. Then
  \[
  K_{[H]}(\xx) := \sum_{H'\in [H]} K_{H'}(\xx) \in \Sym.
  \]
  In particular, $X_G(\xx,q;\mu)$ is a symmetric function.
\end{thm}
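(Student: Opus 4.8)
The plan is to deduce the statement from the symmetry of the ``halved'' class generating functions, which I would in turn prove by an explicit colour‑swapping involution assembled out of local flips. First, recall from the proof of Theorem~\ref{thm:F_expan} the identity $X_P(\xx,q;\mu)=\sum_{H\in\HH(P,\mu)}q^{\asc_P(H)}\overline{K}_H(\xx)$. Grouping heaps by $\sim$-class and using Proposition~\ref{prop:ascent_number} (so that $\asc_P$ is constant on classes) yields $\omega X_P(\xx,q;\mu)=\sum_{[H]}q^{\asc_P([H])}K_{[H]}(\xx)$; hence once $K_{[H]}\in\Sym$ for every class, the ``in particular'' clause follows by applying the involution $\omega$ once more. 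Since $\omega$ is an involution of $\QSym$ fixing $\Sym$, it is equivalent to show that
\[
\overline{K}_{[H]}(\xx):=\omega K_{[H]}(\xx)=\sum_{H'\in[H]}\overline{K}_{H'}(\xx)=\sum_{\kappa}x^{\kappa}
\]
is symmetric, where $\kappa$ runs over all proper multi‑colourings of $P$ of type $\mu$ whose associated heap lies in $[H]$ (using the description in Section~\ref{sec:heap_local_flip} of $\overline{K}_{H'}$ as $\sum_{\mathrm{heap}(\kappa)=H'}x^{\kappa}$). As $\overline{K}_{[H]}$ is homogeneous and quasisymmetric, it suffices to prove it is fixed by the operator $s_i$ interchanging $x_i$ and $x_{i+1}$, for each $i\ge 1$.

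Fix $i$ and a colouring $\kappa$ with heap in $[H]$, and let $U$ be the set of vertices $a$ with $|\kappa(a)\cap\{i,i+1\}|=1$. Sending each $a\in U$ to the unique element of $\kappa(a)\cap\{i,i+1\}$ is a proper $2$-colouring of the induced subgraph $\inc(P)[U]$ (adjacent vertices would otherwise share a colour), and -- here the hypotheses on $P$ enter -- every connected component of $\inc(P)[U]$ is a \emph{path}: $\inc(P)$ is claw‑free (a vertex with three pairwise non‑adjacent neighbours in $U$ would give a block covered by, or covering, three blocks, which $\mathbf{(3+1)}$-freeness forbids) and is chordal (it is an interval graph; an induced $C_4$ is an induced $\mathbf{(2+2)}$), so a bipartite induced subgraph has maximum degree $\le 2$ and no cycle. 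Now define $\tau=\tau_i(\kappa)$ by interchanging the colours $i$ and $i+1$ on every vertex lying in a component of $\inc(P)[U]$ whose two colour classes have \emph{unequal} sizes, i.e.\ on every odd‑length path component. One checks that $\tau\kappa$ is again a proper multi‑colouring, that $U$ and the component structure (hence the set of components acted upon) are unchanged so $\tau$ is an involution, and that $x^{\tau\kappa}=s_i(x^{\kappa})$: an even path component contributes a factor $(x_ix_{i+1})^m$ times monomials avoiding $\{i,i+1\}$, which is already $s_i$-fixed; an odd path component's contribution is exactly what gets interchanged; and a vertex outside $U$ contributes an $s_i$-fixed factor (it carries both of $i,i+1$ or neither).

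The crux is to show $\mathrm{heap}(\tau\kappa)\sim\mathrm{heap}(\kappa)$, for then $\tau$ restricts to an involution of $\{\kappa:\mathrm{heap}(\kappa)\in[H]\}$ and, together with the previous paragraph, gives $s_i\overline{K}_{[H]}=\overline{K}_{[H]}$, finishing the proof. Since $\tau$ acts on a disjoint union of odd path components and each partial swap again produces a genuine proper colouring, it is enough to treat one odd component $K=a_1\!-\!a_2\!-\!\cdots\!-\!a_{2m+1}$, say with $a_1,a_3,\dots$ coloured $i$ and $a_2,a_4,\dots$ coloured $i+1$, and show that the colouring $\kappa'$ obtained by swapping $i,i+1$ on $K$ satisfies $\mathrm{heap}(\kappa')\sim\mathrm{heap}(\kappa)$. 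Writing $\beta_j$ for the distinguished block of $a_j$, passing from $\kappa$ to $\kappa'$ reverses exactly the $2m$ heap‑edges $\{\beta_j,\beta_{j+1}\}$; in $\mathrm{heap}(\kappa)$ these alternate so that each even‑indexed $\beta_{2\ell}$ covers both $\beta_{2\ell-1}$ and $\beta_{2\ell+1}$ (no block has a colour strictly between $i$ and $i+1$), which are non‑adjacent because $K$ is induced. I claim the reversal is realised by the local flips at the flippable triples $(\beta_1,\beta_2,\beta_3),(\beta_3,\beta_4,\beta_5),\dots,(\beta_{2m-1},\beta_{2m},\beta_{2m+1})$, performed in that order: each is still flippable when used, because the flips done beforehand reverse only edges among $\beta_1,\dots,\beta_{2\ell-1}$, none incident to a block of colour $>i+1$, so the up‑sets of $\beta_{2\ell}$ and $\beta_{2\ell+1}$ are unchanged while that of $\beta_{2\ell-1}$ only shrinks; hence still nothing lies strictly between $\beta_{2\ell-1}$ and $\beta_{2\ell}$, nor between $\beta_{2\ell+1}$ and $\beta_{2\ell}$, so $\beta_{2\ell}$ still covers both. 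Performing all $m$ flips reverses each of the $2m$ edges once and alters nothing else, producing $\mathrm{heap}(\kappa')$; iterating over the odd components and using transitivity of $\sim$ completes the argument. The genuinely delicate point is exactly this flippability bookkeeping, and what makes it tractable is that $\mathbf{(3+1)}$- and $\mathbf{(2+2)}$-freeness force the components of $\inc(P)[U]$ to be paths, so that a global colour swap decomposes into a linearly ordered chain of $P_3$-flips.
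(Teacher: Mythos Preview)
Your argument is correct and complete. The reduction to $\overline{K}_{[H]}$ via the quasisymmetric $\omega$, the path structure of $\inc(P)[U]$ (claw-free plus chordal forces a bipartite induced subgraph to be a disjoint union of paths), and the monomial bookkeeping for $\tau_i$ are all sound. The delicate step is the flippability of $(\beta_{2\ell-1},\beta_{2\ell},\beta_{2\ell+1})$ in the intermediate heap $H_{\ell-1}$; your justification is terse but valid. The key point is that every reversed edge lands in some $\beta_{2k}$ with $k<\ell$, whose only remaining out-edges go to blocks of colour $<i$; hence no directed path in $H_{\ell-1}$ that uses a reversed edge can ever return to a block of colour $\ge i$, so the up-sets of $\beta_{2\ell}$ and $\beta_{2\ell+1}$ are literally unchanged, that of $\beta_{2\ell-1}$ only loses $\beta_{2\ell-2}$ and what sat above it, and the down-set of $\beta_{2\ell}$ gains only $\beta_{2\ell-2}$ together with blocks of colour $<i$. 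Intersecting these shows nothing new appears strictly between $\beta_{2\ell\pm1}$ and $\beta_{2\ell}$.

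The paper takes a genuinely different route: it postpones the proof to Section~\ref{sec:Noncomm} and deduces symmetry from the identity $K_{[H]}(\xx)=\langle\Omega(\xx,\uu),\gamma_{[H]}\rangle$ together with the commutation $\ee_k(\uu)\ee_\ell(\uu)\equiv\ee_\ell(\uu)\ee_k(\uu)\bmod\II_P$ (Theorem~\ref{thm:e_k_commute}), which makes $\Omega(\xx,\uu)$ symmetric in the $x$-variables modulo $\II_P[[\xx]]$. The combinatorial heart of Theorem~\ref{thm:e_k_commute}---the bijection $\psi_{k,\ell}$ that leaves balanced connected components alone and flips unbalanced ones---is essentially the same ``swap odd path components'' move you perform, but packaged on the word/algebra side rather than on colourings. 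Your approach is self-contained and reaches Theorem~\ref{thm:X_P_sym} without the noncommutative machinery; the paper's detour through $\UU/\II_P$ costs more setup but yields the duality of Corollary~\ref{cor:strategy}, which is what drives all the later positivity results.
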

Note that by definition, we have
\begin{equation} \label{eq:X_P=sum_K_[H]}
  \omega X_P(\xx,q;\mu) = \sum_{[H]\in\equivclass{\HH(P, \mu)}} q^{\asc_P([H])} K_{[H]}(\xx).
\end{equation}
The theorem follows from some results in Section~\ref{sec:Noncomm},
and thus we postpone the proof until all necessary materials are prepared.
\begin{rmk} \label{rmk:2+2}
    Recall that a poset \( P \) is a natural unit interval order if and only if
    \( P \) is \( \mathbf{(3+1)} \)-and-\( \mathbf{(2+2)} \)-free.
    Since there is no ``natural'' labeling on elements of a $\mathbf{(3+1)}$-free
    poset, the ascent statistic can not be extended to the class of
    $\mathbf{(3+1})$-free posets in a natural way. But we do not need the
    $\mathbf{(2+2)}$-freeness of $P$ for defining local flips, and hence local
    flips can be defined on heaps of the incomparability graphs of
    $\mathbf{(3+1)}$-free posets.
\end{rmk}

The equivalence relation is a more refined notion than
the ascent statistic introduced by Shareshian and Wachs.
Indeed, two heaps with the same ascent number could belong to different
equivalence classes, while Proposition~\ref{prop:ascent_number} says that
any two heaps $H,H'$ belonging to the same equivalent class have
the same ascent number; see Figure~\ref{fig:nonequiv_heaps}.
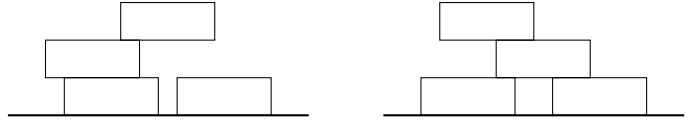
\begin{figure}
  \begin{tikzpicture}[scale=.5]
    \draw (  0,1) rectangle ++(\RECTX,\RECTY);
    \draw (0.5,0) rectangle ++(\RECTX,\RECTY);
    \draw (  2,2) rectangle ++(\RECTX,\RECTY);
    \draw (3.5,0) rectangle ++(\RECTX,\RECTY);
    \draw [thick] (-1,0) -- (7,0);
  \end{tikzpicture} \qquad
  \begin{tikzpicture}[scale=.5]
    \draw (  0,0) rectangle ++(\RECTX,\RECTY);
    \draw (0.5,2) rectangle ++(\RECTX,\RECTY);
    \draw (  2,1) rectangle ++(\RECTX,\RECTY);
    \draw (3.5,0) rectangle ++(\RECTX,\RECTY);
    \draw [thick] (-1,0) -- (7,0);
  \end{tikzpicture}
  \caption{Two non-equivalent heaps with the same ascent number;
    here \( P=(3,3,4,4) \) and heaps of type \( (1^4) \).}
  \label{fig:nonequiv_heaps}
\end{figure}
In addition, Theorem~\ref{thm:X_P_sym} admits a refinement of
the refined $e$-positivity conjecture.
\begin{conj} \label{conj:refined_e-positivity}
Let $P$ be a natural unit interval order on $[n]$, and $\mu\in\mathbb{N}^n$.
For any equivalence class $[H]\in\equivclass{\HH(P,\mu)}$, $K_{[H]}(\xx)$ is
$h$-positive.
In particular, $X_P(\xx,q;\mu)$ is $e$-positive.
\end{conj}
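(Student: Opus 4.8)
The statement has two parts, and the second follows formally from the first. By \eqref{eq:X_P=sum_K_[H]} we have $\omega X_P(\xx,q;\mu)=\sum_{[H]}q^{\asc_P([H])}K_{[H]}(\xx)$, and $\omega$ is a $\mathbb{Q}[q]$-linear involution with $\omega(h_\lambda(\xx))=e_\lambda(\xx)$; hence if each $K_{[H]}(\xx)=\sum_\lambda c^{[H]}_\lambda h_\lambda(\xx)$ with $c^{[H]}_\lambda\in\mathbb{Z}_{\ge 0}$, then $X_P(\xx,q;\mu)=\sum_\lambda\bigl(\sum_{[H]}c^{[H]}_\lambda q^{\asc_P([H])}\bigr)e_\lambda(\xx)$ has coefficients in $\mathbb{Z}_{\ge 0}[q]$. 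So the plan is to reduce everything to the first assertion: for each class $[H]\in\equivclass{\HH(P,\mu)}$, the symmetric function $K_{[H]}(\xx)$ --- symmetric by Theorem~\ref{thm:X_P_sym} --- is $h$-positive.

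The first step is to turn this into a statement about words. By Theorem~\ref{thm:F_expan}, setting $W([H])=\bigsqcup_{H'\in[H]}W(H')$ for the set of words of type $\mu$ whose associated heap lies in $[H]$, one has $K_{[H]}(\xx)=\sum_{\ww\in W([H])}F_{d,\Des_P(\ww)}(\xx)$. On the other side, $h_\lambda(\xx)$ is the $(Q,\sigma)$-partition generating function of a disjoint union of chains of sizes $\lambda_1,\lambda_2,\dots$ under an order-preserving labeling, so by Theorem~\ref{thm:Pw-partition} it too is an explicit nonnegative sum of fundamentals $F_{d,\Des(\cdot)}$. Thus $h$-positivity of $K_{[H]}(\xx)$ amounts to a $\Des_P$-preserving rule that organizes the words of $W([H])$ into groups, each realizing the fundamental expansion of a single $h_\lambda(\xx)$. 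The natural mechanism is an insertion of Robinson--Schensted--Knuth type tailored to $P$: a map $\ww\mapsto(\lambda,\,\text{recording data})$ whose fibers reproduce the $F$-expansion of $h_\lambda$, whose elementary relations are exactly the local flips (so every class $[H]$ is a union of fibers), and which is compatible with $\inv_P$ so the $q$-grading is preserved --- playing the role that the Knuth relations and the plactic monoid play for ordinary RSK. The heap picture, Proposition~\ref{prop:ascent_number}, and the graphs on words constructed in Section~\ref{sec:heap_local_flip} (in the spirit of Assaf and of Blasiak--Fomin) are the tools I would use to build and verify such an insertion.

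The cleaner framework, and the one towards which the second half of the paper is organized, is the noncommutative approach of Fomin--Greene and Blasiak--Fomin: work in a $P$-dependent quotient of a free algebra whose defining relations encode local flips, define in it noncommutative complete homogeneous $P$-symmetric functions $\mathbf{h}_\lambda$, and establish a Cauchy-type duality with $X_P(\xx,q;\mu)$ under which the coefficient of $h_\lambda(\xx)$ in $K_{[H]}(\xx)$ is a pairing $\langle\mathbf{h}_\lambda,[H]\rangle$ computed by counting words. In this language, $h$-positivity of all the $K_{[H]}(\xx)$ is the single assertion that each $\mathbf{h}_\lambda$ has nonnegative coefficients in the word basis of the noncommutative $P$-ring, and I expect this word-positivity to be the main obstacle: it is the $P$-deformed analogue of the true but decidedly nontrivial monomial positivity of noncommutative Schur functions in the plactic algebra, and replacing the Knuth relations by local flips only makes the combinatorics harder. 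This is why the statement is posed as a conjecture: one can push the argument through in special cases --- for instance when $\lambda$ has two columns or is a hook, where $\mathbf{h}_\lambda$ is simple enough to expand explicitly and the resulting $e_\lambda$-coefficients of $X_P(\xx,q;\mu)$ are manifestly nonnegative --- and one can extract weaker structural consequences such as the Harada--Precup recurrence, but a proof for all shapes $\lambda$, let alone the refinement over each equivalence class $[H]$, would require either a uniform combinatorial model for the $h_\lambda$-coefficients of $K_{[H]}(\xx)$ or a genuinely new idea for word-positivity in the noncommutative $P$-ring.
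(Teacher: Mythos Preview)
The statement is a conjecture, not a theorem, and the paper does not prove it; your reduction of the second assertion to the first via \eqref{eq:X_P=sum_K_[H]} and $\omega$ is correct, and you are right that the remaining $h$-positivity of each $K_{[H]}(\xx)$ is open and that the noncommutative framework of Section~\ref{sec:Noncomm} is the right lens.

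There is, however, a genuine mix-up in your Cauchy duality. The noncommutative $P$-complete homogeneous functions $\hh_\lambda(\uu)$ are \emph{not} the objects whose word-positivity would yield $h$-positivity of $K_{[H]}(\xx)$. In the Cauchy expansion $\Omega(\xx,\uu)\equiv\sum_\lambda m_\lambda(\xx)\hh_\lambda(\uu)$ (equation~\eqref{eq:Omega=mh}), the $\hh_\lambda(\uu)$ compute the $m_\lambda$-coefficients of $\omega X_P$, i.e.\ the $f_\lambda$-coefficients of $X_P$; and indeed $\hh_\lambda(\uu)$ already has a manifestly positive monomial expansion by \eqref{eq:hh}, which is exactly why Theorem~\ref{thm:f-positive} is easy. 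The $h_\lambda(\xx)$-coefficient of $K_{[H]}(\xx)$ is governed instead by the noncommutative $P$-\emph{monomial} functions $\mono_\lambda(\uu)$ via $\Omega(\xx,\uu)\equiv\sum_\lambda h_\lambda(\xx)\mono_\lambda(\uu)$ (Proposition~\ref{prop:Omega=hm} and Corollary~\ref{cor:strategy}): the coefficient is $\langle\mono_\lambda(\uu),\gamma_{[H]}\rangle$. So the correct reformulation of Conjecture~\ref{conj:refined_e-positivity} in this language is that each $\mono_\lambda(\uu)$ admits a positive $u$-monomial expansion modulo $\II_P$ --- not $\hh_\lambda(\uu)$.

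This error propagates through your discussion of partial results: the two-column and hook cases you mention are Theorems~\ref{thm:m_two_column} and~\ref{thm:m_hook}, which give positive word expansions of $\mono_{(2^\ell,1^{k-\ell})}(\uu)$ and $\mono_{(a+1,1^\ell)}(\uu)$, not of any $\hh_\lambda(\uu)$. Likewise the Harada--Precup recurrence (Theorem~\ref{thm:m=em}) is a relation among the $\mono_\lambda(\uu)$. Once you swap $\hh_\lambda$ for $\mono_\lambda$ throughout, your assessment of why the problem is hard is accurate: unlike $\hh_\lambda(\uu)$ or $\sch_\lambda(\uu)$, the $\mono_\lambda(\uu)$ are defined by an alternating sum (Definition~\ref{def:mono}) with no evident cancellation-free form in general, and there is no classical positivity of $m_\lambda$ in the plactic monoid to deform.
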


We end this section with constructing an edge-labeled graph $\Gamma_\mu$ for $\mu\in\mathbb{N}^n$ which will appear again in the next section.
The vertex set of $\Gamma_\mu$ is the set $W(\mu)$ of all words of type $\mu$. Two words $\ww=\ww_1\cdots\ww_d$ and $\vv=\vv_1\dots\vv_d$ are adjacent in $\Gamma_\mu$ if there exists an integer $i$ satisfying the one of the following conditions:
\begin{enumerate}[label=(\roman*)]
\item $\ww_j=\vv_j$ for $j\notin\{i,i+1\}$, and $\{\ww_i\ww_{i+1},\vv_i\vv_{i+1}\}=\{\mathsf{ac, ca}\}$ for some $a<_P c$. (Label this edge with $i$)
\item $\ww_j=\vv_j$ for $j\notin\{i-1,i,i+1\}$, and  either $\{\ww_{i-1}\ww_i\ww_{i+1},\vv_{i-1}\vv_i\vv_{i+1}\}=\{\mathsf{bac, acb}\}$ or $\{\mathsf{bca, cab}\}$  for some $a<b<c$ satisfying $a\nless_P b$, $b\nless_P c$ and $a<_P c$. (Label this edge with $\bar{i}$)
\end{enumerate}
The second condition represents how a local flip operates on words. Let $\Gamma$ be a disjoint union of $\Gamma_\mu$ for all $\mu$.

The graph $\Gamma$ is reminiscent of dual equivalence graphs \cite{Assaf2015} or
switchboards \cite{BF2017}. Each connected component of dual equivalence graphs or
switchboards defines a positive sum of specific Schur functions
(sometimes, a single Schur function) where they rely on the structure of
the connected component \cite{Assaf2015,BF2017}. In our case, $\Gamma$ exhibits a
similar property; see Theorem~\ref{thm:s-positive}.

The following proposition tells us a connection between connected components of $\Gamma_\mu$ and equivalence classes in $\equivclass{H(P,\mu)}$.
\begin{prop} \label{prop:gamma_connected_component}
Let $H,H'$ be heaps of type $\mu$. Then the following are equivalent:
\begin{enumerate}[label=(\roman*),font=\upshape]
\item For some $\ww\in W(H)$ and $\ww'\in W(H')$, $\ww$ and $\ww'$ are contained in the same connected component of $\Gamma_\mu$.
\item For all $\ww\in W(H)$ and $\ww'\in W(H')$, $\ww$ and $\ww'$ are contained in the same connected component of $\Gamma_\mu$.
\item $H\sim H'$.
\end{enumerate}
\end{prop}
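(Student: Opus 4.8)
The plan is to prove the cycle of implications (iii) $\Rightarrow$ (ii) $\Rightarrow$ (i) $\Rightarrow$ (iii). The implication (ii) $\Rightarrow$ (i) is immediate since $W(H)$ and $W(H')$ are nonempty (every heap has at least one linear extension). So the work lies in the other two implications, and both will be handled by understanding precisely how the two types of edges in $\Gamma_\mu$ interact with the decomposition $W(\mu)=\bigsqcup_H W(H)$.

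For (iii) $\Rightarrow$ (ii), first I would reduce to the case where $H'$ is obtained from $H$ by a single local flip, since $\sim$ is generated by single flips and the conclusion is transitive along a chain of heaps (given a chain $H=H^{(0)}\sim H^{(1)}\sim\cdots\sim H^{(m)}=H'$, one picks any word in each $W(H^{(k)})$ and stitches the connecting paths together). So suppose $H'$ arises from $H$ by a local flip at a flippable triple $(p,q,r)$ with $p=v_{a,i}$, $q=v_{b,j}$, $r=v_{c,k}$ and $a<b<c$, $a\nless_P b\nless_P c$, $a<_P c$ (the hypotheses extracted in the proof of Proposition~\ref{prop:ascent_number}). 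Now take an arbitrary $\ww\in W(H)$ and an arbitrary $\ww'\in W(H')$; I must connect them inside $\Gamma_\mu$. The key observation is that the type-(i) moves of $\Gamma_\mu$ — transposing two adjacent letters $\mathsf{ac}\leftrightarrow\mathsf{ca}$ with $a<_P c$ — are exactly the ``commutation'' moves that relate two linear extensions of the \emph{same} heap: indeed $\ww,\vv\in W(H)$ for the same $H$ if and only if $\vv$ is reachable from $\ww$ by such transpositions, because in the heap/diagram picture swapping two adjacent letters that are incomparable in $P$ (equivalently, the corresponding blocks sit on non-overlapping intervals and are unrelated in $H$) does not change the resulting pile. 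Hence within $W(H)$ the word $\ww$ is connected to \emph{some} word $\ww_0$ in which the three letters $a,b,c$ coming from $p,q,r$ appear consecutively as a factor $\mathsf{bac}$ or $\mathsf{bca}$ — this is possible precisely because in $H$ the block $q$ covers (or is covered by) both $p$ and $r$ and $p,r$ are incomparable, so one can schedule the linear extension so that $q$ is processed immediately between $p$ and $r$; the type-(ii) edge of $\Gamma_\mu$ then sends $\mathsf{bac}\mapsto\mathsf{acb}$ (resp. $\mathsf{bca}\mapsto\mathsf{cab}$), and one checks that the resulting word lies in $W(H')$ because this factor rearrangement is exactly the effect of the local flip on diagrams (the picture displayed before Example~\ref{ex:flip}). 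Finally, within $W(H')$, connect that word to $\ww'$ by type-(i) moves again. This shows $\ww$ and $\ww'$ lie in the same component, giving (ii).

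For (i) $\Rightarrow$ (iii), suppose $\ww\in W(H)$ and $\ww'\in W(H')$ lie in the same component of $\Gamma_\mu$, so there is a path $\ww=\vv^{(0)},\vv^{(1)},\dots,\vv^{(m)}=\ww'$ using edges of type (i) or (ii). Let $H^{(\ell)}$ be the unique heap with $\vv^{(\ell)}\in W(H^{(\ell)})$. I claim consecutive heaps satisfy $H^{(\ell)}\sim H^{(\ell+1)}$: if the edge is of type (i) then, by the characterization above, $\vv^{(\ell)}$ and $\vv^{(\ell+1)}$ are linear extensions of the same heap, so $H^{(\ell)}=H^{(\ell+1)}$; if the edge is of type (ii), then $\vv^{(\ell+1)}$ is obtained from $\vv^{(\ell)}$ by replacing a factor $\mathsf{bac}\leftrightarrow\mathsf{acb}$ (or $\mathsf{bca}\leftrightarrow\mathsf{cab}$) with $a<b<c$, $a\nless_P b\nless_P c$, $a<_P c$, and one verifies directly from the diagram rule that the pile determined by $\vv^{(\ell+1)}$ is the pile determined by $\vv^{(\ell)}$ with a single local flip performed at the triple of blocks corresponding to these three letters — the covering relations needed for $(p,q,r)$ to be flippable in $H^{(\ell)}$ are forced by $a\nless_P b$, $b\nless_P c$, $a<_P c$ together with the consecutiveness of $\mathsf{bac}$ (resp. $\mathsf{bca}$). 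Hence $H^{(\ell)}\sim H^{(\ell+1)}$ in all cases, and transitivity of $\sim$ yields $H=H^{(0)}\sim H^{(m)}=H'$.

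The main obstacle is the precise bookkeeping in the type-(ii) step: one must show both that a local flip on $H$ can always be ``witnessed'' by a word in $W(H)$ in which the relevant three letters form a consecutive factor of the correct shape ($\mathsf{bac}$ or $\mathsf{bca}$ according to whether $q$ covers or is covered by $p,r$), and conversely that a type-(ii) move between words never corresponds to anything other than a single local flip — in particular that the ambient letters $\vv_j$ with $j\notin\{i-1,i,i+1\}$, which are untouched, really do force the two diagrams to agree away from the flipped triple. This amounts to carefully translating the ``stack blocks in the order dictated by the word, then let gravity act'' description of $W(H)$ into statements about covering relations in the heap poset, using the $\mathbf{(3+1)}$-freeness of $P$ (which, as noted after the definition of flippable triples, bounds the up- and down-degree of each block by $2$) to guarantee there is no interference from a hypothetical fourth block. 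Once this dictionary between adjacent factors of words and covering relations in heaps is set up cleanly, both implications fall out; everything else is the routine transitivity argument for generating $\sim$ and for concatenating paths in $\Gamma_\mu$.
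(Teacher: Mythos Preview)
Your proposal is correct and follows essentially the same approach as the paper: both arguments rest on the two facts that (a) unbarred edges connect exactly the words belonging to the same $W(H)$, and (b) a barred edge between $\ww$ and $\ww'$ corresponds precisely to a single local flip between their heaps. The paper states these two facts (citing Viennot for the first) and declares the remainder routine, while you spell out the cycle (iii) $\Rightarrow$ (ii) $\Rightarrow$ (i) $\Rightarrow$ (iii) explicitly; the content is the same.
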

\begin{proof}
It is a basic fact that $\ww,\ww' \in W(H)$ if and only if there is an path from
$\ww$ to $\ww'$ along only unbarred edges; see \cite[Lemma 4]{Viennot1986}.
Then let us consider barred edges. Suppose that $H$ and $H'$ can be transformed
to each other via a local flip at a flippable triple $(p,q,r)$.
Without loss of generality, let $p=v_{a,i}$, $q=v_{b,j}$ and $r=v_{c,k}$
for some $1\le a < b < c \le n$.
Also suppose that $q\rightarrow p$ and $q\rightarrow r$ in $H$. By the definition of a flippable triple, there is a word $\ww\in W(H)$ such that $\ww=\cdots \mathsf{acb} \cdots$.
Let $\ww' = \cdots \mathsf{bac} \cdots$, so one can quickly check that $\ww'\in W(H')$. But in this case, $\ww$ and $\ww'$ is connected by a barred edge.
Using these facts, the verification of this proposition is straightforward, and we
leave the details to the reader.
\end{proof}
Proposition~\ref{prop:gamma_connected_component} yields that there is a one-to-one
correspondence between equivalence classes in $\equivclass{\HH(P,\mu)}$ and
connected components of $\Gamma_\mu$. Moreover, letting $\Gamma'_\mu$ be the
subgraph of $\Gamma_\mu$ on the same vertex set containing unbarred edges only,
there is a one-to-one correspondence between heaps of type $\mu$ and connected
components of $\Gamma'_\mu$.
\begin{exam} \label{ex:Gamma}
Let $P=P(2,3,3)$ and $\mu=(1,1,2)$. Then $W(\mu)$ consists of 12 words.
Also there are 6 heaps of $P$ of type $\mu$ and 4 equivalence classes in
$\equivclass{\HH(P,\mu)}$; see Figure~\ref{fig:Gamma_112_heap}. The graph
$\Gamma_\mu$ shown in Figure~\ref{fig:Gamma_112_word} reflects this information.
We will return to this example later.
\begin{figure}[t]
\centering
\subfigure[]{
\includegraphics[width=0.7\linewidth]{./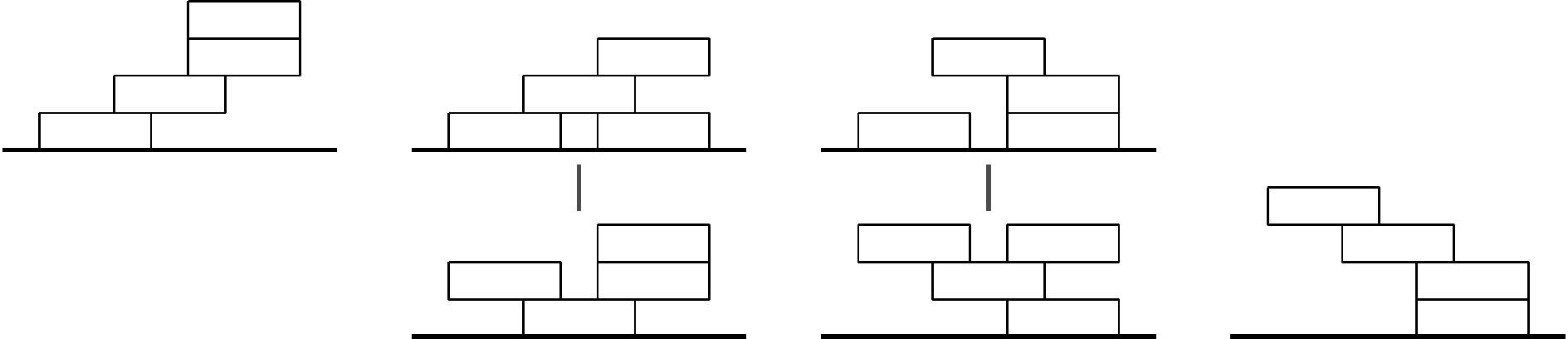}
\label{fig:Gamma_112_heap}
}
\subfigure[]{
\includegraphics[width=0.7\linewidth]{./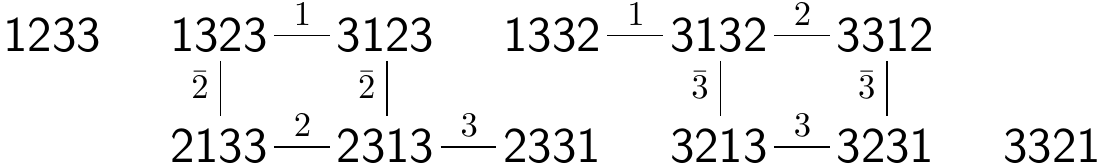}
\label{fig:Gamma_112_word}
}
\caption{Let $P=P(2,3,3)$ and $\mu=(1,1,2)$. (a) All heaps of $P$ of type $\mu$ and their equivalence relations. (b) The graph $\Gamma_\mu$.}\label{fig:Gamma_112}
\end{figure}
\end{exam}

\section{Noncommutative \texorpdfstring{$P$}{P}-symmetric functions} \label{sec:Noncomm}
In this section, we define noncommutative $P$-symmetric functions associated with
a natural unit interval order $P$, and present their connection
with the chromatic quasisymmetric function of \( P \).
Using these, we prove positivity phenomena of $X_P(\xx,q;\mu)$
in several symmetric function bases.

The theory of noncommutative symmetric functions was introduced by Fomin and
Greene~\cite{FG1998}, which is a framework for proving positivity of symmetric
functions. Using this, Schur expansions of various symmetric functions have
been discovered; see, e.g., \cite{FG1998, Blasiak2016}.
Unfortunately, this theory cannot be applied to chromatic quasisymmetric functions
directly. Hence we modify the general setting of the theory slightly,
and show several positivities of the chromatic quasisymmetric functions.
Our modification is inspired by \cite{Stanley1998}.

\subsection{An analogue of noncommutative symmetric functions and the Cauchy product}
Let $P$ be a natural unit interval graph on $[n]$ and $\UU=\mathbb{Z}\langle u_1,\dots,u_n \rangle$ the free associative $\mathbb{Z}$-algebra generated by $\{u_1,\dots,u_n\}$.
For simplicity we write $u_\ww=u_{\ww_1} u_{\ww_2} \cdots u_{\ww_d}$ for a word $\ww=\ww_1 \ww_2 \cdots \ww_d$ on the alphabet $[n]$.
Let $\II_P$ be the 2-sided ideal of $\UU$ generated by the following elements:
\begin{align}
& u_a u_c - u_c u_a & &\mbox{($a <_P c$),} \label{eq:generator_I1}\\
& u_a u_c u_b - u_b u_a u_c & &\mbox{($a<b<c$, $a\nless_P b$, $b\nless_P c$ and $a<_P c$).} \label{eq:generator_I2}
\end{align}
The ideal is just the algebraic counterpart of the graph $\Gamma$,
and hence the following proposition is essentially equivalent to Proposition~\ref{prop:gamma_connected_component}.
\begin{prop} \label{prop:algebraic_expression_local_flip}
  Let $H,H'$ be heaps. Then the following are equivalent:
  \begin{enumerate}[label=(\roman*),font=\upshape]
    \item $u_\ww \equiv u_{\ww'} \mod \II_P$ for some $\ww\in W(H)$ and $\ww'\in W(H')$.
    \item $u_\ww \equiv u_{\ww'} \mod \II_P$ for all $\ww\in W(H)$ and $\ww'\in W(H')$.
    \item $H\sim H'$.
  \end{enumerate}
\end{prop}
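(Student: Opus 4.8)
The plan is to deduce this proposition directly from Proposition~\ref{prop:gamma_connected_component}, since $\II_P$ is nothing but the algebraic encoding of the graph $\Gamma$. The key observation is that the generators \eqref{eq:generator_I1} and \eqref{eq:generator_I2} of $\II_P$ correspond exactly to the two types of edges in $\Gamma_\mu$: a generator $u_au_c - u_cu_a$ with $a<_P c$, when applied inside a longer word $u_\ww$, produces a word $u_{\vv}$ with $\ww,\vv$ joined by an unbarred edge, and a generator $u_au_cu_b - u_bu_au_c$ produces the barred-edge relation (together with its reverse, and with the shifted version $u_au_cu_b - u_bu_ac$ read at position $i-1$ versus $\{\mathsf{bca},\mathsf{cab}\}$ — both shapes listed in the definition of $\Gamma_\mu$ arise from the single relation \eqref{eq:generator_I2} depending on how the triple sits inside $\ww$). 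Thus two words $\ww,\vv\in W(\mu)$ satisfy $u_\ww \equiv u_\vv \pmod{\II_P}$ if and only if $\ww$ and $\vv$ lie in the same connected component of $\Gamma_\mu$.

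To make this precise I would first record the elementary fact that $u_\ww \equiv u_\vv \pmod{\II_P}$ for words $\ww,\vv$ of the same type if and only if one can pass from $\ww$ to $\vv$ by a finite sequence of \emph{local} rewrites, each replacing a factor of length $2$ of the form $\mathsf{ac}\leftrightarrow\mathsf{ca}$ (with $a<_P c$) or a factor of length $3$ of the form $\mathsf{acb}\leftrightarrow\mathsf{bac}$ or $\mathsf{bca}\leftrightarrow\mathsf{cab}$ (with $a<b<c$, $a\nless_P b$, $b\nless_P c$, $a<_P c$). This is because $\II_P$ is a two-sided ideal generated by homogeneous, multilinear, degree-preserving binomials $u_{\ss} - u_{\tt}$ with $\ss,\tt$ of the same type, so the quotient $\UU/\II_P$ has a $\mathbb{Z}$-basis indexed by the equivalence classes of words under the congruence generated by these local moves, and a monomial $u_\ww$ reduces to $u_\vv$ iff $\ww$ and $\vv$ are in the same class. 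Matching these local moves against the edge definitions of $\Gamma_\mu$ shows that this word-congruence class is precisely the connected component of $\ww$ in $\Gamma_\mu$. With that identification in hand, conditions (i), (ii), (iii) of the present proposition become verbatim conditions (i), (ii), (iii) of Proposition~\ref{prop:gamma_connected_component}, and we are done.

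Concretely, the proof would run: (iii) $\Rightarrow$ (ii): if $H\sim H'$, then by Proposition~\ref{prop:gamma_connected_component} every $\ww\in W(H)$ and $\ww'\in W(H')$ lie in the same component of $\Gamma_\mu$, hence are connected by a path of unbarred/barred edges, each of which is an instance of a defining relation of $\II_P$; therefore $u_\ww \equiv u_{\ww'} \pmod{\II_P}$. (ii) $\Rightarrow$ (i) is trivial since $W(H), W(H')$ are nonempty. (i) $\Rightarrow$ (iii): if $u_\ww \equiv u_{\ww'} \pmod{\II_P}$, then by the basis description above $\ww$ and $\ww'$ are connected by a sequence of the local moves, i.e.\ by a path in $\Gamma_\mu$, so they lie in the same connected component; Proposition~\ref{prop:gamma_connected_component} then gives $H\sim H'$.

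The only genuinely nontrivial point — the ``main obstacle'' — is the claim that $\UU/\II_P$ has a basis indexed by word-congruence classes, i.e.\ that reduction modulo $\II_P$ of a monomial $u_\ww$ never produces anything outside the span of monomials reachable from $\ww$ by the local moves. This is immediate from the fact that $\II_P$ is generated by \emph{differences of monomials of the same content}: any element of $\II_P$ is a $\mathbb{Z}$-linear combination of elements $u_{\xx} (u_{\ss} - u_{\tt}) u_{\yy}$ with $\ss, \tt$ one of the generating pairs, and such an element is a difference of two monomials related by a single local move; hence the congruence $\equiv \pmod{\II_P}$ on monomials is exactly the equivalence relation generated by the local moves, with no collapsing of distinct classes. (A one-line way to see there is no unexpected collapse: the word-type, and more refined invariants preserved by the moves, show the classes are genuinely distinct, but we do not even need this, since for the statement we only need that $u_\ww\equiv u_{\ww'}$ forces a connecting path, which is the easy direction.) I would state this as a short lemma or simply remark, as the excerpt does, that ``the verification is straightforward,'' and refer back to Proposition~\ref{prop:gamma_connected_component} for the bridge to the heap side.
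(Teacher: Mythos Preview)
Your proposal is correct and follows exactly the paper's approach: the paper simply remarks that $\II_P$ is the algebraic counterpart of the graph $\Gamma$ and hence the proposition is essentially equivalent to Proposition~\ref{prop:gamma_connected_component}, giving no further proof. Your write-up fills in the details the paper omits; the one small imprecision---that the barred edge $\{\mathsf{bca},\mathsf{cab}\}$ actually requires both \eqref{eq:generator_I1} and \eqref{eq:generator_I2}, not \eqref{eq:generator_I2} alone---does not affect the argument.
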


For $k\ge1$, we define the \emph{noncommutative $P$-elementary symmetric function} $\ee^P_k(\uu)$, which is not in general symmetric on variables $u_1,\dots,u_n$, by
\begin{equation} \label{eq:def_ee}
\ee^P_k(\uu) = \sum_{i_1 >_P i_2 >_P \cdots >_P i_k} u_{i_1} u_{i_2} \cdots u_{i_k} \in\UU.
\end{equation}
By convention, let $\ee^P_0(\uu) = 1$ and $\ee^P_k(\uu) = 0$ for any $k<0$.
For a partition $\lambda=(\lambda_1,\dots,\lambda_\ell)$, define
$\ee^P_\lambda(\uu) = \ee^P_{\lambda_1}(\uu)\cdots\ee^P_{\lambda_\ell}(\uu)$.
Throughout this paper, the given poset $P$ is always clear, so we write
$\ee_\lambda(\uu)$ instead of $\ee^P_{\lambda}(\uu)$.
Moreover, while other noncommutative $P$-symmetric functions, to be defined later,
also depend on \( P \), we will omit the superscript $P$.
Before introducing others noncommutative \( P \)-symmetric functions, we verify
the following important property.
Blasiak and Fomin~\cite{BF2017} called this property the commutation relation.
\begin{thm} \label{thm:e_k_commute}
For any integers $k, \ell\ge 0$, $\ee_k(\uu)$ and $\ee_\ell(\uu)$ commute with
each other modulo $\II_P$, that is,
\[
\ee_k(\uu) \ee_\ell(\uu) \equiv \ee_\ell(\uu) \ee_k(\uu) \mod \II_P.
\]
\end{thm}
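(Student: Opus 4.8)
The plan is to prove the commutation relation $\ee_k(\uu)\ee_\ell(\uu)\equiv\ee_\ell(\uu)\ee_k(\uu)\pmod{\II_P}$ by exhibiting an explicit, weight-preserving involution on words that witnesses the congruence, in the spirit of the classical argument for the ordinary noncommutative elementary symmetric functions and of Fomin--Greene's commutation lemmas. Expanding both sides, $\ee_k(\uu)\ee_\ell(\uu)=\sum u_{i_1}\cdots u_{i_k}u_{j_1}\cdots u_{j_\ell}$ over strictly $<_P$-decreasing chains $i_1>_P\cdots>_P i_k$ and $j_1>_P\cdots>_P j_\ell$, and similarly for the other side. So it suffices to biject these ``double chains'' in a way that changes the underlying word only by moves of type \eqref{eq:generator_I1} and \eqref{eq:generator_I2}.

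The first thing I would do is reduce to $|k-\ell|\le 1$, or even to $\ell=k$ and $\ell=k+1$, using that $\ee_k\ee_\ell$ and $\ee_\ell\ee_k$ can be compared by a chain of transpositions of adjacent factors — but more honestly, the cleanest route is probably to avoid the reduction and argue directly. Here is the heuristic for the direct argument. Think of a monomial on the left as a pair of $P$-decreasing chains stacked side by side; I want to rearrange letters so that the first chain has length $\ell$ and the second has length $k$. The key local observation, already recorded in the proof of Proposition~\ref{prop:ascent_number}, is that if $a<b<c$ with $a\nless_P b\nless_P c$ but $a<_P c$, then the relation \eqref{eq:generator_I2} lets us rewrite $u_a u_c u_b \equiv u_b u_a u_c$; combined with \eqref{eq:generator_I1}, which freely commutes any two $P$-comparable letters, one gets enough flexibility to ``shuffle'' a letter from one chain to the other whenever the lengths are unequal. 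Concretely I would set up a sorting/insertion procedure: scan the concatenation of the two chains and, using \eqref{eq:generator_I1} to slide $P$-comparable letters past each other and \eqref{eq:generator_I2} to handle the obstruction where three mutually-incomparable-in-a-row letters occur, move one boundary letter across, showing at each step that the resulting word is still a legitimate element appearing in the expansion of the other product and that the whole operation is reversible.

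A slicker alternative, which I would try first, is the generating-function / transfer-matrix style proof: introduce commuting formal parameters and form $\EE^P(t)=\sum_{k\ge0}\ee_k(\uu)\,t^k$, and try to show $\EE^P(s)\EE^P(t)\equiv\EE^P(t)\EE^P(s)\pmod{\II_P[[s,t]]}$ by checking it coefficient-wise in low total degree — the point being that modulo $\II_P$ the algebra $\UU$ is spanned by (classes of) words that correspond to heaps, and $\ee_k\ee_\ell$ is, up to the relations, the generating element for pairs of antichains, which is manifestly symmetric in a combinatorial sense. I expect the main obstacle to be the bookkeeping in case \eqref{eq:generator_I2}: unlike the symmetric-group case where one only ever commutes or applies a Knuth-type move in a ``clean'' environment, here one must verify that the hypotheses $a<b<c$, $a\nless_P b$, $b\nless_P c$, $a<_P c$ are exactly met at each place the move is invoked, and that no forbidden configuration (which would require a relation not in $\II_P$) ever arises — this is where the $\mathbf{(3+1)}$-freeness and $\mathbf{(2+2)}$-freeness of $P$ must be used, presumably to guarantee that two $P$-incomparable letters that both sit in $P$-decreasing chains cannot create an unbreakable obstruction. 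Once the involution (or the degree-by-degree check) is in place, the rest is routine, and one records along the way that the bijection restricts to one between $W(H)$-type classes, recovering Theorem~\ref{thm:X_P_sym} as promised.
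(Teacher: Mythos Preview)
Your proposal has the right high-level framework --- expand both sides over pairs of $P$-decreasing chains and biject $\EE_k\times\EE_\ell\to\EE_\ell\times\EE_k$ so that the corresponding monomials agree modulo $\II_P$ --- but it stops short of an actual construction. The ``sorting/insertion'' paragraph never says \emph{which} letter to move, in what order, or why the process terminates and is reversible; and the generating-function alternative amounts to restating the claim. The real work is exactly the part you flagged as ``bookkeeping,'' and as written there is no algorithm to bookkeep.

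The paper supplies the missing idea: interpret a pair $(\ww,\vv)\in\EE_k\times\EE_\ell$ diagrammatically as a rank-$2$ heap, with the $k$ blocks of $\ww$ at height~$1$ and the $\ell$ blocks of $\vv$ at height~$2$. Each connected component of this diagram has $r$ blocks at height~$1$ and $s$ at height~$2$ with $|r-s|\le 1$ (this is where the $\mathbf{(3+1)}$-freeness enters, bounding the local shape). The bijection is then: leave balanced components ($r=s$) alone, and in each unbalanced component swap the two heights. Swapping heights in an unbalanced component is exactly a sequence of local flips, i.e.\ applications of relation~\eqref{eq:generator_I2}, while moving isolated blocks uses relation~\eqref{eq:generator_I1}. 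Reversibility is immediate because the operation is its own inverse on each component. This component-by-component picture is what turns your plan into a proof; without it, a naive left-to-right scan runs into the problem that moving one letter across can destroy the $P$-decreasing condition on the chain it joins, and it is not clear how to repair this locally.
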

\begin{proof}
For $m\ge 1$, let $\EE_m$ be the set of all words $\ww=\ww_1\cdots \ww_m$ such that $\ww_1>_P\cdots >_P \ww_m$. By convention, let $\EE_0$ be the set consisting of the empty word. Since
\[
\ee_k(\uu) \ee_\ell(\uu) = \sum_{(\ww,\vv)\in\EE_k\times\EE_\ell} u_\ww u_\vv,
\]
we prove this theorem by constructing a bijection $\psi_{k,\ell}: \EE_k \times \EE_\ell \rightarrow \EE_\ell \times \EE_k$ satisfying for $(\ww, \vv)\in \EE_k \times \EE_\ell$, $u_\ww u_\vv \equiv u_{\vv'} u_{\ww'}$ modulo $\II_P$ where $(\vv',\ww') = \psi(\ww,\vv)$.
We describe $\psi_{k,\ell}$ using diagrams which are similar to diagrams of heaps. Let $\ww=\ww_1\cdots \ww_k$ and $\vv=\vv_1\cdots \vv_\ell$. We take again $n$ unit intervals on the real line, which correspond to $P$. Then for $1\le i\le k$ put a block on the $\ww_i$th interval. Also for $1\le j\le \ell$ place a block at height $2$ above on the $\vv_j$th interval.
Then blocks may or may not be connected to others placed at different height. If a block placed at height $1$ is isolated, then lift it to height $2$, and vice versa if a block at height $2$ is isolated, then lower it to height $1$.
For non-isolated blocks, we consider their connected components. Each connected component consists of $r$ blocks at height $1$ and $s$ blocks at height $2$ with $|r-s| \le 1$.
If $r=s$, we leave them unchanged. If $r\neq s$, we switch their heights; lift all blocks at height $1$ to at height $2$, and drop all blocks at height $2$ to at height $1$. In fact, this process is a consequence of a series of local flips.
Hence, we finally obtain a diagram consisting of $\ell$ blocks at height $1$ and $k$ blocks at height $2$. Figure~\ref{fig:commute} shows how this procedure works. A diagram shown in Figure~\ref{fig:commute_1} corresponds to a pair $(\ww,\vv)\in\EE_5\times\EE_6$. The diagram obtained by applying the procedure described above to $(\ww,\vv)$ is depicted in Figure~\ref{fig:commute_2}.
\begin{figure}
\centering
\subfigure[]{
    \begin{tikzpicture}[scale=.5]
        \draw (0,1) rectangle ++(\RECTX,\RECTY);
        \draw (1,0) rectangle ++(\RECTX,\RECTY);
        \draw (4,0) rectangle ++(\RECTX,\RECTY);
        \draw (6,1) rectangle ++(\RECTX,\RECTY);
        \draw (7.5,0) rectangle ++(\RECTX,\RECTY);
        \draw (11,1) rectangle ++(\RECTX,\RECTY);
        \draw (14,1) rectangle ++(\RECTX,\RECTY);
        \draw (15.5,0) rectangle ++(\RECTX,\RECTY);
        \draw (17,1) rectangle ++(\RECTX,\RECTY);
        \draw (19,0) rectangle ++(\RECTX,\RECTY);
        \draw (20,1) rectangle ++(\RECTX,\RECTY);
        \draw [thick] (-1,0) -- (23.5,0);
    \end{tikzpicture}
\label{fig:commute_1}
}
\subfigure[]{
    \begin{tikzpicture}[scale=.5]
        \draw (0,1) rectangle ++(\RECTX,\RECTY);
        \draw (1,0) rectangle ++(\RECTX,\RECTY);
        \draw (4,1) rectangle ++(\RECTX,\RECTY);
        \draw (6,0) rectangle ++(\RECTX,\RECTY);
        \draw (7.5,1) rectangle ++(\RECTX,\RECTY);
        \draw (11,0) rectangle ++(\RECTX,\RECTY);
        \draw (14,0) rectangle ++(\RECTX,\RECTY);
        \draw (15.5,1) rectangle ++(\RECTX,\RECTY);
        \draw (17,0) rectangle ++(\RECTX,\RECTY);
        \draw (19,1) rectangle ++(\RECTX,\RECTY);
        \draw (20,0) rectangle ++(\RECTX,\RECTY);
        \draw [thick] (-1,0) -- (23.5,0);
    \end{tikzpicture}
\label{fig:commute_2}
}
\caption{}
\label{fig:commute}
\end{figure}

Then we let $\vv'$ and $\ww'$ be the words corresponding to blocks at height $1$ and $2$, respectively. One can deduce from Proposition~\ref{prop:algebraic_expression_local_flip} that $u_\ww u_\vv \equiv u_{\vv'} u_{\ww'}$ modulo $\II_P$, which finishes the proof.
\end{proof}

Mimicking the relation \eqref{eq:relation_eh}, we define the
\emph{noncommutative $P$-complete homogeneous symmetric functions}
$\hh_k(\uu)$ inductively as follows:
\[
\hh_k(\uu) - \ee_1(\uu) \hh_{k-1}(\uu) + \cdots + (-1)^k \ee_k(\uu) = \delta_{k,0},
\]
with $\hh_0(\uu)=1$, and define $\hh_\lambda(\uu) = \hh_{\lambda_1}(\uu)\cdots\hh_{\lambda_\ell}(\uu)$ for a partition $\lambda=(\lambda_1,\dots,\lambda_\ell)$.
Then it is easy to check that
\begin{equation} \label{eq:hh}
\hh_k(\uu) = \sum_{i_1\ngtr_P i_2\ngtr_P \cdots \ngtr_P i_k} u_{i_1} u_{i_2} \cdots u_{i_k}.
\end{equation}

\begin{prop} \label{prop:unique_word_and_hh}
Let $P$ be a natural unit interval order, and $H$ a heap of $P$.
Then there is a unique word $\ww\in W(H)$ such that $\ww$ has no $P$-descents.
Denoting this word by $\ww_H$, we have
\[
\hh_k(\uu) = \sum_{H} u_{\ww_H},
\]
where the sum is over all heaps $H$ consisting of $k$ blocks.
\end{prop}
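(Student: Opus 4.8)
The plan is to establish the two assertions of Proposition~\ref{prop:unique_word_and_hh} separately, using the diagrammatic description of heaps. First I would prove the uniqueness of a $P$-descent-free word in $W(H)$. Recall that $W(H)$ is exactly the set of words obtained by reading off a linear extension of $H$ (viewed as a poset), equivalently the set of ``piling orders'' that reconstruct the diagram of $H$. A word $\ww = \ww_1\cdots\ww_k$ has no $P$-descent precisely when $\ww_i \not>_P \ww_{i+1}$ for all $i$. The key observation is that, given $H$, one can construct such a word greedily: among all sinks of $H$ (the blocks with nothing above them in the poset $\prec$), repeatedly remove the one sitting on the \emph{smallest} interval with respect to $<_P$, recording its interval index. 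I would argue that this produces a word in $W(H)$ with no $P$-descent, and that it is the only one: if $\ww \in W(H)$ and $\ww_i >_P \ww_{i+1}$, then the block placed at step $i$ is $P$-greater than (hence comparable to, hence adjacent to) the block placed at step $i+1$; since the step-$i$ block is stacked first but lies on a higher interval index in $<_P$, it must in fact sit \emph{below} the step-$i+1$ block in the heap, so by acyclicity $\ww$ cannot be a valid linear extension — actually the cleaner route is to show that any $P$-descent $\ww_i >_P \ww_{i+1}$ can be locally repaired by swapping positions $i$ and $i+1$ (these two letters are $P$-comparable, so the swap stays in $W(H)$ by the braid-type relations governing linear extensions), strictly decreasing $\inv_P$; since $\inv_P \ge 0$, the process terminates at a $P$-descent-free word, and one checks the terminal word is forced. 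Uniqueness then follows because any two $P$-descent-free words in $W(H)$ would be connected by such swaps, but a swap strictly changes $\inv_P$ unless the two letters are $P$-incomparable, and swapping $P$-incomparable adjacent letters is exactly relation~\eqref{eq:generator_I1}, which does not change the word as an element — rather, one shows directly that two distinct $P$-descent-free readings would force a block to be both above and below another.

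For the identity $\hh_k(\uu) = \sum_H u_{\ww_H}$, I would start from the established formula~\eqref{eq:hh}, namely $\hh_k(\uu) = \sum_{i_1 \not>_P i_2 \not>_P \cdots \not>_P i_k} u_{i_1}\cdots u_{i_k}$, so that the sum ranges over all words of length $k$ with no $P$-descent. It therefore suffices to show that the map sending a $P$-descent-free word $\ww$ to the heap $H$ it piles up is a bijection from $\{P$-descent-free words of length $k\}$ onto $\{$heaps of $P$ with $k$ blocks$\}$. Surjectivity is the existence part of the first assertion (every heap $H$ has some $\ww_H$), and injectivity is the uniqueness part (a heap determines its $P$-descent-free word). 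Since $W(k) = \bigsqcup_H W(H)$ and each $W(H)$ contains exactly one $P$-descent-free word by the first assertion, this bijection is immediate, and the two sums match term by term: $\hh_k(\uu) = \sum_{\ww \text{ no } P\text{-desc}} u_\ww = \sum_H u_{\ww_H}$.

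The main obstacle I anticipate is the uniqueness argument — specifically, making precise why a heap cannot admit two distinct $P$-descent-free linear extensions. The subtlety is that adjacent $P$-incomparable letters commute in $\UU/\II_P$ but correspond to genuinely different \emph{words} (different linear extensions of $H$), so a naive ``$\inv_P$ is monotone'' argument must be supplemented: swapping two adjacent $P$-incomparable letters $\ww_i, \ww_{i+1}$ is allowed inside $W(H)$ only when the corresponding blocks are not stacked on comparable intervals, but if they are $P$-incomparable they \emph{are} adjacent in $P^\mu$ and hence one lies strictly above the other in $H$, which pins down their relative order in \emph{any} linear extension. So in fact adjacent transpositions within $W(H)$ only ever swap letters $\ww_i, \ww_{i+1}$ that are non-adjacent in $P^\mu$ (equivalently $P$-comparable but with $\ww_i <_P \ww_{i+1}$, i.e.\ $P$-ascents) or $P$-descents; a $P$-descent-free word admits no $P$-descent swap, and I would show it also admits no $P$-ascent swap without creating a $P$-descent elsewhere — or, more robustly, I would invoke the standard fact (e.g., via \cite[Lemma~4]{Viennot1986}) that $W(H)$ is a single connected component under commutations of non-adjacent letters, and that within such a component the unique element avoiding $P$-descents is obtained by the greedy ``smallest available sink'' procedure, which is manifestly deterministic. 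Once this combinatorial lemma on heaps is nailed down, the rest is bookkeeping.
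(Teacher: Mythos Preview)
Your swap-and-decrease argument has a real error: swapping a $P$-descent $\ww_i\ww_{i+1}$ does \emph{not} decrease $\inv_P$. Since $\ww_i >_P \ww_{i+1}$ means the two letters are $P$-comparable, the pair $(i,i+1)$ never contributes to $\inv_P$ (which only counts $P$-\emph{incomparable} inversions), and the swap leaves every other pair's contribution unchanged. Indeed $\inv_P(\ww)=\asc_P(H)$ is constant on all of $W(H)$, so no termination argument based on $\inv_P$ can work. Your fallback greedy procedure does produce a $P$-descent-free word and is deterministic, but determinism alone only gives existence; the uniqueness half---why no \emph{other} $\ww\in W(H)$ can be $P$-descent-free---is exactly the gap you flag, and your sketch does not close it. The observation that every allowed swap out of a $P$-descent-free word creates a $P$-descent shows only that such words are local minima under the commutation moves, not that there is just one of them.

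The paper sidesteps all of this by recycling the proof of Theorem~\ref{thm:F_expan}, where an order-preserving labeling $\sigma$ of $H$ and a bijection $\phi:\LL(H,\sigma)\to W(H)$ satisfying $\Des(\pi)=\Des_P(\phi(\pi))$ were already constructed. Since $\sigma$ is order-preserving, the identity permutation is the unique element of $\LL(H,\sigma)$ with empty descent set, and $\phi$ carries it to the unique $P$-descent-free word in $W(H)$. Your greedy ``smallest available sink'' rule is exactly the rule used there to build $\sigma$, so the two constructions coincide; the point is that packaging it as a labeling reduces uniqueness to the trivial fact that $[d]$ has only one descent-free permutation. The identity for $\hh_k(\uu)$ then follows from~\eqref{eq:hh} exactly as you say.
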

\begin{proof}
In the proof of Theorem~\ref{thm:F_expan}, we constructed an order-preserving
labeling $\sigma$ together with a bijective map $\phi:\LL(H,\sigma)\rightarrow W(H)$ such that $\Des(\pi)=\Des_P(\phi(\pi))$ for all $\pi\in\LL(H,\sigma)$.
Since $\sigma$ is order-preserving, there exists a unique $\pi_0\in\LL(H,\sigma)$
with no descents. Then $\phi(\pi_0)$ is the unique word in $W(H)$ with no
$P$-descents.
The second part of this lemma follows immediately from \eqref{eq:hh}.
\end{proof}

By Theorem~\ref{thm:e_k_commute}, $\hh_k(\uu)$'s also commute with each other in $\UU/\II_P$ since they can be written as polynomials in $\ee_1(\uu), \ee_2(\uu), \dots$.

Define
\[
H(x,\uu) = \sum_{\ell\ge 0} x^\ell \hh_\ell(\uu) \in\UU[[x]]
\]
and
\[
\Omega(\xx, \uu) = H(x_1, \uu)H(x_2, \uu)\cdots = \sum_\alpha M_\alpha(\xx) \hh_\alpha(\uu) \in\UU[[\xx]],
\]
where $\alpha=(\alpha_1,\dots,\alpha_\ell)$ ranges over all compositions and $\hh_\alpha(\uu) = \hh_{\alpha_1}(\uu)\cdots\hh_{\alpha_\ell}(\uu)$. Here, $x$ and $\xx$ commute with $\uu$.
We call $\Omega(\xx,\uu)$ the \emph{noncommutative $P$-Cauchy product}.
Observing the definition of $\Omega(\xx,\uu)$ and \eqref{eq:hh}, we have
\begin{equation} \label{eq:Omega=F}
\Omega(\xx,\uu) = \sum_{\ww} F_{d, \Des_P(\ww)}(\xx) u_\ww,
\end{equation}
where $\ww$ ranges over all words on the alphabet $[n]$ and $d$ is the length of $\ww$.
Since $H(x_i,\uu) H(x_j,\uu) \equiv H(x_j,\uu) H(x_i,\uu)$ modulo $\II_P[[\xx]]$,
we can write $\Omega(\xx,\uu)$ as the usual Cauchy product \eqref{eq:comm_Cauchy_mh}:
\begin{equation} \label{eq:Omega=mh}
\Omega(\xx,\uu) \equiv \sum_{\lambda} m_\lambda(\xx) \hh_\lambda(\uu) \mod \II_P[[\xx]].
\end{equation}
The noncommutative $P$-Cauchy product $\Omega(\xx,\uu)$ gives us a duality
between the chromatic quasisymmetric function $X_P(\xx,q;\mu)$ and
noncommutative $P$-symmetric functions.
To explain what the term `duality' means, let $\UU^*$ be the free
$\mathbb{Z}$-module generated by words on the alphabet $[n]$, and
$\langle~,~\rangle$ a canonical pairing between $\UU$ and $\UU^*$ such that
$\langle u_\ww , \vv \rangle = \delta_{\ww,\vv}$ for words $\ww$ and $\vv$.
Then $\UU^*$ is the dual space of $\UU$ as $\mathbb{Z}$-modules. Letting
\[
\II_P^\perp = \{\gamma\in\UU^* \mid \langle z,\gamma \rangle = 0 \mbox{ for all } z\in\II_P\}
\]
be the orthogonal complement of $\II_P$, we have the naturally induced pairing between $\UU/\II_P$ and $\II_P^\perp$:
for $z+\II_P \in \UU/\II_P$ and $\gamma\in\II_P^\perp$, the pairing $\langle z+\II_P, \gamma \rangle := \langle z, \gamma \rangle$ is well-defined. Also let $\UU^*_q = \mathbb{Z}[q] \otimes_\mathbb{Z} \UU^*$ and $\II_{P,q}^\perp = \mathbb{Z}[q] \otimes_\mathbb{Z} \II_P^\perp$. Then we extend the pairing to $\UU_q^*$, and $\II_{P,q}^\perp$ plays the role of $\II_P^\perp$.

Let
\[
\gamma_{H} = \sum_{\ww\in W(H)} \ww \in\UU^* \qand \gamma_{[H]} = \sum_{H'\in [H]} \gamma_{H'} \in\UU^*
\]
for a heap $H$. Then one can show that $\gamma_{[H]}$ belongs to not only $\UU^*$, but also $\II_P^\perp$. Also let
\begin{align}
  \gamma_\mu
    &= \sum_{\ww\in W(\mu)} q^{\inv_P(\ww)} \ww \nonumber \\
    &= \sum_{[H]\in\equivclass{\HH(P,\mu)}} q^{\asc_P([H])} \gamma_{[H]} \in \UU^*_q, \label{eq:gamma_mu=sum_gamma_[H]}  
\end{align}
so $\gamma_\mu$ is automatically in $\II_{P,q}^\perp$.
Note that \eqref{eq:gamma_mu=sum_gamma_[H]} parallels to \eqref{eq:X_P=sum_K_[H]}.
The following theorem enables us to use the noncommutative $P$-Cauchy product $\Omega(\xx,\uu)$ for studying the chromatic quasisymmetric function.
\begin{thm} \label{thm:X_P=Omega}
For a nonnegative integer sequence $\mu=(\mu_1,\dots,\mu_n)$, we have
\begin{equation} \label{eq:K_H=Omega}
  K_{[H]}(\xx) = \langle \Omega(\xx,\uu), \gamma_{[H]} \rangle
\end{equation}
and
\begin{equation} \label{eq:X_P=Omega}
\omega X_P(\xx,q;\mu) = \langle \Omega(\xx,\uu), \gamma_\mu \rangle.
\end{equation}
In particular, $K_{[H]}(\xx)$ is a symmetric function, and so is $\omega X_P(\xx,q;\mu)$.
\end{thm}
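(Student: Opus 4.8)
The plan is to read off the pairing $\langle \Omega(\xx,\uu), \gamma_{[H]}\rangle$ directly from the word expansion \eqref{eq:Omega=F} of the noncommutative $P$-Cauchy product, to match it against the $F$-expansion of $K_{[H]}(\xx)$ coming from the proof of Theorem~\ref{thm:F_expan}, and finally to extract symmetry from the congruence \eqref{eq:Omega=mh} together with the fact that $\gamma_{[H]}$ annihilates $\II_P$.

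First I would compute. By \eqref{eq:Omega=F} we have $\Omega(\xx,\uu) = \sum_{\ww} F_{d,\Des_P(\ww)}(\xx)\, u_\ww$, the sum over all words $\ww$ on $[n]$ with $d=\ell(\ww)$. Pairing against $\gamma_{[H]} = \sum_{H'\in[H]}\sum_{\ww\in W(H')} \ww$ and using $\langle u_\ww,\vv\rangle=\delta_{\ww,\vv}$ isolates exactly the words lying in some $W(H')$ with $H'\in[H]$, so that
\[
  \langle \Omega(\xx,\uu), \gamma_{[H]}\rangle = \sum_{H'\in[H]} \sum_{\ww\in W(H')} F_{d,\Des_P(\ww)}(\xx).
\]
The proof of Theorem~\ref{thm:F_expan} established $K_{H'}(\xx) = \sum_{\ww\in W(H')} F_{d,\Des_P(\ww)}(\xx)$ for every heap $H'$, so the right-hand side is $\sum_{H'\in[H]} K_{H'}(\xx) = K_{[H]}(\xx)$, which is \eqref{eq:K_H=Omega}. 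Forming the $q$-weighted sum over $[H]\in\equivclass{\HH(P,\mu)}$ and using bilinearity of the pairing together with the decomposition \eqref{eq:gamma_mu=sum_gamma_[H]} of $\gamma_\mu$ gives
\[
  \langle \Omega(\xx,\uu), \gamma_\mu\rangle = \sum_{[H]} q^{\asc_P([H])} K_{[H]}(\xx),
\]
and by \eqref{eq:X_P=sum_K_[H]} the right-hand side is $\omega X_P(\xx,q;\mu)$, which is \eqref{eq:X_P=Omega}.

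For the symmetry assertion the crucial input is that $\gamma_{[H]}\in\II_P^\perp$, so that the pairing $\langle\,\cdot\,,\gamma_{[H]}\rangle$ descends to $\UU/\II_P[[\xx]]$. I would verify $\gamma_{[H]}\in\II_P^\perp$ by checking that for arbitrary monomials $u_\alpha,u_\beta$ each generator of $\II_P$, flanked by $u_\alpha$ and $u_\beta$, pairs to zero: for \eqref{eq:generator_I1} the words underlying $u_\alpha u_a u_c u_\beta$ and $u_\alpha u_c u_a u_\beta$ are joined by an unbarred edge of $\Gamma_\mu$, so by Viennot's lemma either both or neither lies in $\bigsqcup_{H'\in[H]} W(H')$ and their contributions cancel; for \eqref{eq:generator_I2} the two words are joined by a barred edge, so by Proposition~\ref{prop:gamma_connected_component}/\ref{prop:algebraic_expression_local_flip} they lie in $W(H_1)$ and $W(H_2)$ with $H_1\sim H_2$, and again either both or neither contributes. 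Granting this, I may replace $\Omega(\xx,\uu)$ in \eqref{eq:K_H=Omega} by its congruent form \eqref{eq:Omega=mh}, obtaining
\[
  K_{[H]}(\xx) = \sum_{\lambda} m_\lambda(\xx)\, \langle \hh_\lambda(\uu), \gamma_{[H]}\rangle,
\]
a bounded-degree $\mathbb{Q}$-linear combination of monomial symmetric functions, hence an element of $\Sym$. Consequently $\omega X_P(\xx,q;\mu) = \sum_{[H]} q^{\asc_P([H])} K_{[H]}(\xx)\in\Sym_q$, and since $\omega$ is a $\mathbb{Q}[q]$-linear involution of $\Sym_q$ we also get $X_P(\xx,q;\mu)\in\Sym_q$; in particular Theorem~\ref{thm:X_P_sym} follows.

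The one genuinely delicate point is the verification that $\gamma_{[H]}\in\II_P^\perp$: it hinges on recognizing that a single application of a relation \eqref{eq:generator_I1} or \eqref{eq:generator_I2} inside a word is precisely one edge of $\Gamma_\mu$ (unbarred, respectively barred), and then invoking Proposition~\ref{prop:algebraic_expression_local_flip} to conclude that the equivalence class of the associated heap is unchanged. Once this is established, the remainder is routine bookkeeping with the bilinear pairing; a minor extra care is only needed to confirm that the pairing behaves well under the scalar extension to $\UU^*_q$ and $\II^\perp_{P,q}$ and commutes with the formal sums over $\xx$, which is immediate since the words of any fixed degree $d$ form a finite set.
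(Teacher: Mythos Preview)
Your proof is correct and follows essentially the same approach as the paper: both compute the pairing via \eqref{eq:Omega=F} to obtain \eqref{eq:K_H=Omega}, derive \eqref{eq:X_P=Omega} from \eqref{eq:X_P=sum_K_[H]} and \eqref{eq:gamma_mu=sum_gamma_[H]}, and deduce symmetry from $\gamma_{[H]}\in\II_P^\perp$ together with the commutation of Theorem~\ref{thm:e_k_commute}. The only differences are cosmetic---you supply an explicit verification that $\gamma_{[H]}\in\II_P^\perp$ (which the paper merely asserts) and invoke the congruence \eqref{eq:Omega=mh} to exhibit $K_{[H]}(\xx)$ as a sum of $m_\lambda$'s, whereas the paper argues directly that the order of the factors $H(x_i,\uu)$ in $\Omega(\xx,\uu)$ is irrelevant modulo $\II_P[[\xx]]$.
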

\begin{proof}
  Let us first prove \eqref{eq:K_H=Omega}. For \( [H]\in\equivclass{\HH(P,\mu)} \),
  we have
  \begin{align*}
    \left\langle \Omega(\xx,\uu), \gamma_{[H]} \right\rangle
      &= \left\langle \sum_{\ww} F_{d,\Des_P(\ww)}(\xx) u_\ww, \gamma_{[H]} \right\rangle && \mbox{by \eqref{eq:Omega=F},} \\
      &= \sum_{\ww} F_{d,\Des_P(\ww)}(\xx) \sum_{H'\in [H]} \sum_{\vv\in W(H')} \langle u_\ww,\vv \rangle \\
      &= \sum_{H'\in [H]} \sum_{\vv\in W(H')} F_{d,\Des_P(\vv)}(\xx) \\
      &= \sum_{H'\in [H]} K_{H'}(\xx) = K_{[H]}(\xx) && \mbox{by the definition,}
  \end{align*}
  where $\ww$ ranges over all words on the alphabet $[n]$ and $d$ is the length of $\ww$.
  Due to Theorem~\ref{thm:e_k_commute}, we also have
  \[
    H(x,\uu) H(y,\uu) \equiv H(y,\uu) H(x,\uu) \mod \II_P[[x,y]],
  \]
  where \( x \) and \( y \) commute with each other and \( \uu \).
  This implies that the order of the product in the definition of $\Omega(\xx,\uu)$ has no effect when $\Omega(\xx,\uu)$ is considered as an element in $\UU[[\xx]]/\II_P[[\xx]]$. So $K_{[H]}(\xx)$ is a symmetric function because $\gamma_{[H]}\in\II_{P}^\perp$.
  In addition, \eqref{eq:X_P=Omega} follows from \eqref{eq:X_P=sum_K_[H]},
  \eqref{eq:gamma_mu=sum_gamma_[H]} and \eqref{eq:K_H=Omega}.
\end{proof}
Based on this theorem, we will use the following strategy to obtain a
combinatorial description of coefficients of certain bases in the expansion
of \( K_{[H]}(\xx) \) and $\omega X_P(\xx,q;\mu)$.
\begin{cor} \label{cor:strategy}
Suppose that we can write
\begin{equation} \label{eq:Omega=gf}
\Omega(\xx,\uu)\equiv \sum_{\lambda} g_\lambda(\xx) \ff_\lambda(\uu) \mod \II_P[[\xx]]
\end{equation}
for some symmetric function basis $g_\lambda(\xx)$ and noncommutative $P$-symmetric functions $\ff_\lambda(\uu)$.
For a heap \( H \), let \( K_{[H]}(\xx) = \sum_\lambda r_{\lambda,[H]}
g_\lambda(\xx) \). Then we have
\begin{equation} \label{eq:r_lambda,[H]}
  r_{\lambda,[H]} = \langle \ff_\lambda(\uu), \gamma_{[H]} \rangle.
\end{equation}
In particular, if $\omega X_P(\xx,q;\mu) = \sum_{\lambda} r_{\lambda,\mu}(q) g_\lambda(\xx)$, we have
\begin{equation} \label{eq:r_lambda,mu}
  r_{\lambda,\mu}(q) = \langle \ff_\lambda(\uu), \gamma_\mu \rangle.
\end{equation}
\end{cor}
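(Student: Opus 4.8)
The plan is to derive both identities by pairing the hypothesized expansion \eqref{eq:Omega=gf} against the dual elements $\gamma_{[H]}$ and $\gamma_\mu$, and then comparing coefficients. Once Theorem~\ref{thm:X_P=Omega} is available, this is a purely formal bookkeeping argument, and I do not expect any genuinely hard step.

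First I would apply the pairing $\langle\,\cdot\,,\gamma_{[H]}\rangle$ to both sides of \eqref{eq:Omega=gf}. Since $\gamma_{[H]}\in\II_P^\perp$ (as recorded just before the statement of Theorem~\ref{thm:X_P=Omega}), the induced pairing on $\UU[[\xx]]/\II_P[[\xx]]$ is well-defined, so the congruence modulo $\II_P[[\xx]]$ becomes an honest equality in $\mathbb{Q}[[\xx]]$ after pairing. On the left-hand side, \eqref{eq:K_H=Omega} gives $\langle\Omega(\xx,\uu),\gamma_{[H]}\rangle=K_{[H]}(\xx)$. On the right-hand side, the pairing is $\mathbb{Z}$-bilinear and the variables $\xx$ commute with $\uu$, so it distributes over the sum and pulls out the scalar coefficients $g_\lambda(\xx)$; moreover $\gamma_{[H]}$ is a finite combination of words of a single length $d=|\mu|$ while each $\ff_\lambda(\uu)$ is homogeneous of degree $|\lambda|$, so only the finitely many $\lambda\vdash d$ contribute and there is no convergence issue. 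Hence
\[
K_{[H]}(\xx)=\sum_{\lambda}\langle\ff_\lambda(\uu),\gamma_{[H]}\rangle\,g_\lambda(\xx).
\]
Comparing this with $K_{[H]}(\xx)=\sum_\lambda r_{\lambda,[H]}\,g_\lambda(\xx)$ and using that $\{g_\lambda(\xx)\}_{\lambda\in\Par}$ is a basis of $\Sym$ yields \eqref{eq:r_lambda,[H]}. For \eqref{eq:r_lambda,mu} I would either repeat the same computation with $\gamma_\mu$ in place of $\gamma_{[H]}$ — legitimate since $\gamma_\mu\in\II_{P,q}^\perp$ by \eqref{eq:gamma_mu=sum_gamma_[H]}, with \eqref{eq:X_P=Omega} identifying the left-hand side as $\omega X_P(\xx,q;\mu)$ — or simply expand $\gamma_\mu=\sum_{[H]}q^{\asc_P([H])}\gamma_{[H]}$ and combine \eqref{eq:r_lambda,[H]} with $\omega X_P(\xx,q;\mu)=\sum_{[H]}q^{\asc_P([H])}K_{[H]}(\xx)$ from \eqref{eq:X_P=sum_K_[H]}. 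Either route gives $r_{\lambda,\mu}(q)=\langle\ff_\lambda(\uu),\gamma_\mu\rangle$.

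There is essentially no obstacle internal to this corollary: its content is already contained in Theorem~\ref{thm:X_P=Omega}, and the corollary merely records what pairing a basis expansion against the dual elements produces. The only points deserving a line of care are (i) that pairing against $\gamma_{[H]}$ or $\gamma_\mu$ is legitimate even though \eqref{eq:Omega=gf} is only a congruence modulo $\II_P[[\xx]]$ — which is precisely the reason for insisting $\gamma_{[H]}\in\II_P^\perp$ — and (ii) that the comparison of coefficients on both sides is justified by the linear independence of $\{g_\lambda(\xx)\}_{\lambda\in\Par}$. The genuine work lies upstream, in producing an explicit expansion of the form \eqref{eq:Omega=gf} for a useful basis $g_\lambda$ (the forgotten, power sum, and Schur cases treated later in the section), not in deducing the corollary from it.
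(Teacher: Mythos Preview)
Your proposal is correct and follows essentially the same approach as the paper: pair the congruence \eqref{eq:Omega=gf} against $\gamma_{[H]}$ (using $\gamma_{[H]}\in\II_P^\perp$), invoke Theorem~\ref{thm:X_P=Omega} to identify the left side as $K_{[H]}(\xx)$, distribute the pairing on the right, and compare coefficients in the basis $\{g_\lambda(\xx)\}$. The paper's proof is terser but identical in substance; your additional remarks on finiteness and linear independence are correct but not strictly needed.
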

\begin{proof}
The fact that $\gamma_{[H]}$ belongs to $\II_{P}^\perp$ allows us to combine Theorem~\ref{thm:X_P=Omega} with \eqref{eq:Omega=gf}. Then
\[
K_{[H]}(\xx)
  = \left\langle \sum_\lambda g_\lambda(\xx) \ff_\lambda(\uu), \gamma_{[H]} \right\rangle
  = \sum_\lambda \langle \ff_\lambda(\uu), \gamma_{[H]} \rangle g_\lambda(\xx)
\]
and hence we obtain the desired identity.
\end{proof}
In brief, Corollary~\ref{cor:strategy} asserts that assuming the
expression~\eqref{eq:Omega=gf}, a \( \uu \)-monomial expression of
\( \ff_\lambda(\uu) \) provides the \( g \)-expansions of \( K_{[H]}(\xx) \)
and \( \omega X_P(\xx,q;\mu) \).
In particular, if \( \ff_\lambda(\uu) \) has a positive \( \uu \)-monomial
expression, then both \( r_{\lambda,[H]} \) and \( r_{\lambda,\mu}(q) \) are
nonnegative.
The only difference between finding \( r_{\lambda,[H]} \) and finding
\( r_{\lambda,\mu} \) lies in the choice of the word set paired with
\( \ff_{\lambda}(\uu) \); compare \eqref{eq:r_lambda,[H]} and
\eqref{eq:r_lambda,mu}.
Therefore, once we derive a positive \( \uu \)-monomial expression of
\( \ff_\lambda(\uu) \), we will only state an explicit description of the
\( g \)-coefficient of \( \omega X_P(\xx,q;\mu) \) from the \( \uu \)-monomial expression, although the coefficient of \( K_{[H]}(\xx) \) can also be obtained
in the same way.

In what follows, we will introduce several noncommutative $P$-symmetric functions
in order to obtain expansions of $\omega X_P(\xx,q;\mu)$ into certain bases using
Corollary~\ref{cor:strategy}.
More precisely, we will define noncommutative \( P \)-power sum symmetric
functions (Definition~\ref{def:def_pp}), noncommutative \( P \)-Schur functions
(Definition~\ref{def:ss}), and noncommutative \( P \)-monomial symmetric
functions (Definition~\ref{def:mono}).
We will then express $\Omega(\xx,\uu)$ in several ways akin to
\eqref{eq:comm_Cauchy_mh}--\eqref{eq:comm_Cauchy_hm}.
Subsequently, we will derive positive \( \uu \)-monomial expressions for the
noncommutative \( P \)-symmetric functions.
Finally, by applying Corollary~\ref{cor:strategy},
we will present expansions of $\omega X_P(\xx,q;\mu)$ in terms of various bases.

As an example, let us consider the noncommutative $P$-complete homogeneous
symmetric functions $\hh_\lambda(\uu)$, which we already defined.
They provide the expansion of $\omega X_P(\xx,q;\mu)$ in terms of the monomial
symmetric functions, or equivalently the expansion of $X_P(\xx,q;\mu)$ in terms of
the forgotten symmetric functions.
\begin{thm} \label{thm:f-positive}
Let $X_P(\xx,q;\mu) = \sum_\lambda a_\lambda(q) f_\lambda(\xx)$.
Then we have
\[
  a_\lambda(q) = \sum_{\ww} q^{\inv_P(\ww)},
\]
where $\ww$ ranges over all words of type $\mu$ such that when we split $\ww$
from left to right into consecutive segments of lengths
$\lambda_1,\lambda_2,\dots,\lambda_\ell$, each segment has no $P$-descents.
\end{thm}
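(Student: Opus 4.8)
The plan is to apply Corollary~\ref{cor:strategy} with the symmetric function basis $g_\lambda(\xx) = m_\lambda(\xx)$ and the noncommutative $P$-symmetric functions $\ff_\lambda(\uu) = \hh_\lambda(\uu)$. By \eqref{eq:Omega=mh}, we already have the required expression $\Omega(\xx,\uu) \equiv \sum_\lambda m_\lambda(\xx)\hh_\lambda(\uu) \pmod{\II_P[[\xx]]}$. Hence Corollary~\ref{cor:strategy} gives, writing $\omega X_P(\xx,q;\mu) = \sum_\lambda r_{\lambda,\mu}(q) m_\lambda(\xx)$, the identity $r_{\lambda,\mu}(q) = \langle \hh_\lambda(\uu), \gamma_\mu \rangle$. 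Since $\omega$ interchanges $m_\lambda(\xx)$ and $f_\lambda(\xx)$, the coefficient $a_\lambda(q)$ of $f_\lambda(\xx)$ in $X_P(\xx,q;\mu)$ equals the coefficient $r_{\lambda,\mu}(q)$ of $m_\lambda(\xx)$ in $\omega X_P(\xx,q;\mu)$, so it remains only to compute $\langle \hh_\lambda(\uu), \gamma_\mu \rangle$ combinatorially.

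For this computation, I would first obtain a positive $\uu$-monomial expression of $\hh_\lambda(\uu)$. Using the closed form \eqref{eq:hh}, namely $\hh_k(\uu) = \sum_{i_1 \ngtr_P i_2 \ngtr_P \cdots \ngtr_P i_k} u_{i_1}\cdots u_{i_k}$, and multiplying out $\hh_\lambda(\uu) = \hh_{\lambda_1}(\uu)\cdots\hh_{\lambda_\ell}(\uu)$, we see that $\hh_\lambda(\uu) = \sum_{\ww} u_\ww$, where $\ww$ ranges over all words on $[n]$ that, when split from left to right into consecutive segments of lengths $\lambda_1,\dots,\lambda_\ell$, have no $P$-descent within any segment (equivalently, each segment is weakly increasing in the $<_P$ sense, i.e., consecutive letters satisfy $\ww_i \ngtr_P \ww_{i+1}$).

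Next I would pair this with $\gamma_\mu = \sum_{\vv\in W(\mu)} q^{\inv_P(\vv)}\,\vv$. By the definition of the canonical pairing $\langle u_\ww, \vv\rangle = \delta_{\ww,\vv}$, we get
\[
  \langle \hh_\lambda(\uu), \gamma_\mu \rangle = \sum_{\ww} q^{\inv_P(\ww)},
\]
where the sum is over all words $\ww$ that are simultaneously of type $\mu$ and segment-wise $P$-descent-free with respect to the composition $(\lambda_1,\dots,\lambda_\ell)$. This is exactly the claimed formula for $a_\lambda(q)$. One should also note that, although $\lambda$ is a partition, the answer is independent of the order of the $\lambda_i$'s: this follows because $\hh_k(\uu)$'s commute modulo $\II_P$ (Theorem~\ref{thm:e_k_commute}) and $\gamma_\mu \in \II_{P,q}^\perp$, so any rearrangement of the segments yields the same pairing.

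The main obstacle is essentially bookkeeping rather than a genuine difficulty: one must be careful that the ``no $P$-descents'' condition in \eqref{eq:hh} matches the segment-wise condition after taking the product $\hh_{\lambda_1}(\uu)\cdots\hh_{\lambda_\ell}(\uu)$, and in particular that no constraint is imposed between the last letter of one segment and the first letter of the next. Since the product of the $\hh_{\lambda_i}(\uu)$ is just concatenation of the index words of each factor, this is immediate, and there is no interaction across segment boundaries. The only subtlety worth a sentence in the writeup is the well-definedness of applying Corollary~\ref{cor:strategy}, which is already guaranteed by $\gamma_\mu \in \II_{P,q}^\perp$ as recorded just before Theorem~\ref{thm:X_P=Omega}.
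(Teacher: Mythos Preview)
Your proposal is correct and follows essentially the same approach as the paper: apply Corollary~\ref{cor:strategy} with $g_\lambda=m_\lambda$ and $\ff_\lambda=\hh_\lambda$ using \eqref{eq:Omega=mh}, then read off $\langle \hh_\lambda(\uu),\gamma_\mu\rangle$ from the monomial expansion \eqref{eq:hh}. The paper's proof is just a two-line version of what you wrote; your extra remarks on $\omega$ exchanging $m_\lambda$ and $f_\lambda$, on the absence of cross-segment constraints, and on independence of the order of the $\lambda_i$ are all correct elaborations but not strictly needed.
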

\begin{proof}
By Corollary~\ref{cor:strategy} and \eqref{eq:Omega=mh}, it suffices to show that
\[
\langle \hh_\lambda(\uu), \gamma_\mu \rangle = \sum_{\ww} q^{\inv_P(\ww)}
\]
where $\ww$ is described above. This immediately follows from \eqref{eq:hh}.
\end{proof}

\begin{exam} \label{ex:f_expansion}
    Consider the running example, Example~\ref{ex:Gamma}.
    To compute the coefficient \( a_{3,1}(q) \), it suffices to find words
    \( \ww=\ww_1 \ww_2 \ww_3 \ww_4 \) such that the subword \( \ww_1 \ww_2 \ww_3 \)
    has no \( P \)-descents. We can find such \framebox{words} as follows:
    \begin{center}
        \includegraphics[scale=0.8]{./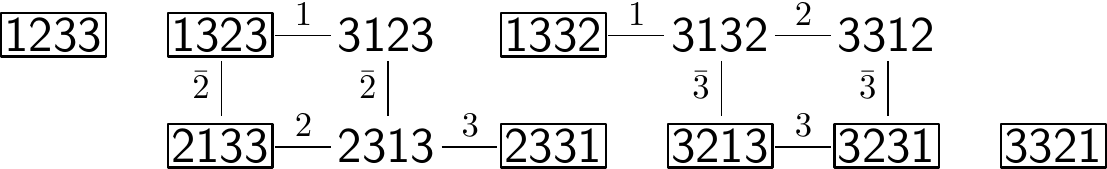}
    \end{center}
    Then we obtain \( a_{3,1}(q) = q^3 + 3 q^2 + 3 q + 1 \).
\end{exam}

\subsection{Noncommutative \texorpdfstring{$P$}{P}-power sum symmetric functions}
When Stanley~\cite{Stanley1995} introduced chromatic symmetric functions $X_G(\xx)$,
he also gave the $p$-expansion of $X_G(\xx)$. For the quasisymmetric case,
Shareshian and Wachs~\cite{SW2016} interpreted the coefficient of $p_n(\xx)$ of
$X_P(\xx,q)$, and conjectured similar interpretations for the coefficients of
$p_\lambda(\xx)$ for arbitrary partitions $\lambda$. Athanasiadis~\cite{Athanasiadis2015}
proved soon thereafter their conjecture, thus the $p$-positivity of
$\omega X_P(\xx,q)$ (equivalently, the $\omega p$-positivity of $X_P(\xx,q)$)
was established.
In his paper, he used some results for $\alpha$-unimodal sequences together
with the result of Shareshian--Wachs.
In this subsection, we define noncommutative \( P \)-power sum symmetric functions,
which give the positive \( p \)-expansion of \( \omega X_P(\xx,q;\mu) \).

As the noncommutative $P$-complete homogeneous symmetric functions $\hh_\lambda(\uu)$, we define noncommutative $P$-power sum symmetric functions using the relation \eqref{eq:relation_peh}.
\begin{defn} \label{def:def_pp}
For $k\ge 1$, the \emph{noncommutative $P$-power sum symmetric function}
$\pp_k(\uu)$ is defined by
\begin{equation} \label{eq:def_pp}
  \pp_k(\uu) = \ee_1(\uu) \hh_{k-1}(\uu) - 2 \ee_2(\uu) \hh_{k-2}(\uu)
                + \cdots + (-1)^{k-1} k \ee_k (\uu).
\end{equation}
For a partition $\lambda=(\lambda_1,\dots,\lambda_\ell)$, define $\pp_\lambda(\uu) = \pp_{\lambda_1}(\uu)\cdots\pp_{\lambda_\ell}(\uu).$
\end{defn}
\begin{prop} \label{prop:Omega=pp}
  We have
  \begin{equation} \label{eq:Omega=pp}
    \Omega(\xx,\uu) \equiv \sum_\lambda \frac{1}{z_\lambda} p_\lambda(\xx) \pp_\lambda(\uu) \mod \II_P[[\xx]].
  \end{equation}
\end{prop}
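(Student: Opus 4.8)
The plan is to transport the classical power-sum form \eqref{eq:comm_Cauchy_pp} of the Cauchy product through a ring homomorphism from $\Sym$ into the commutative algebra generated by the $\ee_k(\uu)$ modulo $\II_P$, using the already-established expression \eqref{eq:Omega=mh} to recognize the image of the Cauchy product as $\Omega(\xx,\uu)$.

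First I would set up the homomorphism. Let $A$ denote the $\mathbb{Q}$-subalgebra of $(\UU/\II_P)\otimes_{\mathbb{Z}}\mathbb{Q}$ generated by the classes of $\ee_1(\uu),\ee_2(\uu),\dots$; by Theorem~\ref{thm:e_k_commute} the ring $A$ is commutative. Since $\Sym=\mathbb{Q}[e_1(\yy),e_2(\yy),\dots]$ is a polynomial ring in the elementary symmetric functions, the assignment $e_k(\yy)\mapsto \ee_k(\uu)+\II_P$ extends uniquely to a $\mathbb{Q}$-algebra homomorphism $\Phi\colon\Sym\to A$. Because $\Phi$ is a ring homomorphism and $\hh_k(\uu)$ (resp.\ $\pp_k(\uu)$) is built from $\ee_1(\uu),\dots,\ee_k(\uu)$ by the very same polynomial relation \eqref{eq:relation_eh} (resp.\ \eqref{eq:relation_peh}) that defines $h_k(\yy)$ (resp.\ $p_k(\yy)$) from $e_1(\yy),\dots,e_k(\yy)$, we get $\Phi(h_k(\yy))=\hh_k(\uu)+\II_P$ and $\Phi(p_k(\yy))=\pp_k(\uu)+\II_P$, hence $\Phi(h_\lambda)=\hh_\lambda(\uu)+\II_P$ and $\Phi(p_\lambda)=\pp_\lambda(\uu)+\II_P$ for every $\lambda\in\Par$. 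Each of $\ee_k(\uu),\hh_k(\uu),\pp_k(\uu)$ is homogeneous of degree $k$ as a polynomial in the $u_i$, so $\Phi$ respects the degree grading and extends, applied coefficientwise and degree by degree in the total degree, to a map on power series in $\xx$ (treated as central scalars), still denoted $\Phi$.

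Then I would simply evaluate. By \eqref{eq:Omega=mh} together with \eqref{eq:comm_Cauchy_mh}, $\Omega(\xx,\uu)\equiv\sum_{\lambda}m_\lambda(\xx)\hh_\lambda(\uu)=\Phi(C(\xx,\yy))$ modulo $\II_P[[\xx]]$, where $C(\xx,\yy)$ is the Cauchy product. Applying the alternative expression \eqref{eq:comm_Cauchy_pp} for $C(\xx,\yy)$ and then $\Phi$ yields
\[
  \Omega(\xx,\uu)\equiv\Phi\!\left(\sum_{\lambda}\frac{1}{z_\lambda}p_\lambda(\xx)p_\lambda(\yy)\right)=\sum_{\lambda}\frac{1}{z_\lambda}p_\lambda(\xx)\pp_\lambda(\uu)\pmod{\II_P[[\xx]]},
\]
which is exactly \eqref{eq:Omega=pp}. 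I do not expect a genuine obstacle: the only points needing care are the well-definedness of $\Phi$, which is precisely the commutation relation of Theorem~\ref{thm:e_k_commute}, and the harmlessness of working in the completion $\UU[[\xx]]/\II_P[[\xx]]$, which is handled by the grading remark (each homogeneous component of the identity above is a finite sum). This is the same mechanism that will be reused for the remaining expansions of $\Omega(\xx,\uu)$ later in the section.
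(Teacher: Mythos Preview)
Your proposal is correct and follows essentially the same approach as the paper: define a ring homomorphism $\Sym\to\UU/\II_P$ by $e_k\mapsto\ee_k(\uu)$ (well-defined by the commutation relation of Theorem~\ref{thm:e_k_commute}), observe that it carries $h_k,p_k$ to $\hh_k,\pp_k$ by their defining recursions, and then transport the classical identity \eqref{eq:comm_Cauchy_pp} for $C(\xx,\yy)$ using \eqref{eq:Omega=mh}. Your version is in fact slightly more careful than the paper's in making the target a commutative subalgebra and in noting the grading argument that justifies working degree by degree in $\UU[[\xx]]/\II_P[[\xx]]$.
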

\begin{proof}
  Although the proof is essentially the same as the proof of
  \cite[Eq. (2.15)]{BF2017}, we include it for the sake of completeness.

  Let $\Sym(\yy)$ be the ring of symmetric polynomials in commuting variables
  $y_1,y_2,\dots$. Then \( \{e_k(\yy)\}_{k\ge 1} \) generates
  $\Sym(\yy)$ and they are algebraically independent.
  Define $\phi:\Sym(\yy)\rightarrow \UU$ by $\phi(e_k(\yy)) = \ee_k(\uu)$ so that
  $\phi(C(\xx,\yy))\equiv \Omega(\xx,\uu)$ modulo $\II_P[[\xx]]$ where
  $C(\xx,\yy)$ is the ordinary Cauchy product. Also we have
  $\phi(p_k(\yy)) \equiv \pp_k(\uu)$ modulo $\II_P$ because $p_k(\yy)$ and
  $\pp_k(\uu)$ obey the same kind of relations \eqref{eq:relation_peh} and
  \eqref{eq:def_pp}, respectively. Hence we deduce \eqref{eq:Omega=pp} from
  \eqref{eq:comm_Cauchy_pp} and \eqref{eq:Omega=mh}.
\end{proof}

Let us now find a \( \uu \)-monomial expression of $\pp_\lambda(\uu)$ in order to
expand \( \omega X_P(\xx,q;\mu) \) in terms of the power sum symmetric functions.
For a word $\ww=\ww_1\cdots\ww_d$, $\ww_i$ is a \emph{left-to-right $P$-maximum} if $\ww_i >_P \ww_j$ for all $1\le j< i$. Of course, $\ww$ always has at least one left-to-right $P$-maximum, namely $\ww_1$.
Then we say that $\ww$ has no nontrivial left-to-right $P$-maxima if for each $2\le i\le d$, there is an integer $j<i$ such that $\ww_i \ngtr_P \ww_j$.

\begin{thm} \label{thm:p-positive}
We have
\begin{equation} \label{eq:pp_monomial}
\pp_k(\uu) \equiv \sum_{\ww} u_\ww \mod \II_P,
\end{equation}
where the sum ranges over all words $\ww$ of length $k$ with no $P$-descents and no nontrivial left-to-right $P$-maxima. Consequently, let $\omega X_P(\xx,q;\mu) = \sum_\lambda \frac{1}{z_\lambda} b_\lambda(q) p_\lambda(\xx)$, then
\[
b_\lambda(q) = \sum_{w\in \NN_\lambda(\mu)} q^{\inv_P(w)}
\]
where $\NN_\lambda(\mu)$ is the set of words $\ww$ of type $\mu$ such that when we split $\ww$ into consecutive subwords of length $\lambda_1,\dots,\lambda_\ell$, each consecutive subword has no $P$-descents and no nontrivial left-to-right $P$-maxima.
\end{thm}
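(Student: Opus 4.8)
\emph{The plan} is to deduce the whole statement from the single identity \eqref{eq:pp_monomial}, which is the $\uu$-monomial expansion of $\pp_k(\uu)$ modulo $\II_P$; everything else is then bookkeeping. Granting \eqref{eq:pp_monomial}, the formula for $b_\lambda(q)$ follows at once: Proposition~\ref{prop:Omega=pp} puts $\Omega(\xx,\uu)$ in the form required by Corollary~\ref{cor:strategy} with $g_\lambda(\xx)=\frac{1}{z_\lambda}p_\lambda(\xx)$ and $\ff_\lambda(\uu)=\pp_\lambda(\uu)$, so $b_\lambda(q)=\langle\pp_\lambda(\uu),\gamma_\mu\rangle$. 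Writing $\pp_\lambda(\uu)=\pp_{\lambda_1}(\uu)\cdots\pp_{\lambda_\ell}(\uu)$ and applying \eqref{eq:pp_monomial} to each factor (congruence modulo $\II_P$ being multiplicative), $\pp_\lambda(\uu)\equiv\sum u_{\ww^{(1)}\cdots\ww^{(\ell)}}\pmod{\II_P}$, the sum over all tuples in which $\ww^{(t)}$ has length $\lambda_t$, no $P$-descent, and no nontrivial left-to-right $P$-maximum. Pairing with $\gamma_\mu\in\II_{P,q}^\perp$ and using $\langle u_\ww,\gamma_\mu\rangle=q^{\inv_P(\ww)}$ for $\ww\in W(\mu)$ and $0$ otherwise, the right-hand side collapses to $\sum_{\ww\in\NN_\lambda(\mu)}q^{\inv_P(\ww)}$.

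For \eqref{eq:pp_monomial} itself I would first write $\pp_k(\uu)$ explicitly in $\UU$. Expanding the definition \eqref{eq:def_pp} into $\uu$-monomials via the defining sums for $\ee_i(\uu)$ and the formula \eqref{eq:hh} for $\hh_j(\uu)$, the coefficient of $u_\ww$ in $\pp_k(\uu)$ (for $|\ww|=k$) is $\sum_i(-1)^{i-1}i$ over those $i$ with $\ww_1>_P\cdots>_P\ww_i$ and $\ww_{i+1}\ngtr_P\cdots\ngtr_P\ww_k$, i.e.\ with $\{1,\dots,i-1\}\subseteq\Des_P(\ww)\subseteq\{1,\dots,i\}$. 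Such an $i$ exists only if $\Des_P(\ww)$ is an initial segment $\{1,\dots,m\}$ of $[k-1]$ (possibly empty), and then the admissible $i$ are those of $\{m,m+1\}$ lying in $[k]$; a one-line computation gives coefficient $(-1)^m$. Hence, in $\UU$,
\[
  \pp_k(\uu)=\sum_{\ww\in\mathcal{C}_k}(-1)^{|\Des_P(\ww)|}u_\ww,\qquad
  \mathcal{C}_k:=\{\ww : |\ww|=k,\ \Des_P(\ww)\text{ is an initial segment of }[k-1]\},
\]
and, writing $B_k\subseteq\mathcal{C}_k$ for the words $\ww$ with $\Des_P(\ww)=\emptyset$ having no nontrivial left-to-right $P$-maximum, \eqref{eq:pp_monomial} says $\pp_k(\uu)\equiv\sum_{\ww\in B_k}u_\ww\pmod{\II_P}$.

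It then suffices to construct an involution $\iota$ of $\mathcal{C}_k$ with $u_{\iota(\ww)}\equiv u_\ww\pmod{\II_P}$ for all $\ww$, with $|\Des_P(\iota(\ww))|\not\equiv|\Des_P(\ww)|\pmod 2$ whenever $\iota(\ww)\ne\ww$, and with fixed-point set exactly $B_k$; summing the displayed identity over $\mathcal{C}_k$ and reducing modulo $\II_P$ finishes the proof. The two reciprocal moves are: if $\ww\in\mathcal{C}_k\setminus B_k$ has no $P$-descent, it has a nontrivial left-to-right $P$-maximum, and at the first such position $p$ one has $\ww_p>_P\ww_j$ for all $j<p$, so by \eqref{eq:generator_I1} $u_{\ww_p}$ slides to the front past $u_{\ww_1},\dots,u_{\ww_{p-1}}$; the elementary fact that in a natural unit interval order $a<_Pb$ and $b\ngtr_Pc$ force $a\ngtr_Pc$ shows the resulting word again lies in $\mathcal{C}_k$ and has one more $P$-descent. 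If instead $\Des_P(\ww)=\{1,\dots,m\}$ with $m\ge1$, the prefix $\ww_1\cdots\ww_{m+1}$ is a $P$-chain, so $u_{\ww_1},\dots,u_{\ww_{m+1}}$ commute pairwise modulo $\II_P$ and the head may be rearranged freely, which is exactly the latitude needed to define the inverse move and drop the $P$-descent count by one. \textbf{The main obstacle} is to organize these moves into a genuine involution on all of $\mathcal{C}_k$ — in particular to treat $\Des_P(\ww)=\{1,\dots,m\}$ with $m\ge2$, where a single commutation need not keep the word inside $\mathcal{C}_k$, so one must exploit the full commutativity of the head-block and fix a canonical choice — and then to check that the fixed-point set is precisely $B_k$; this verification is the technical heart. (Alternatively, applying the algebra homomorphism $e_k\mapsto\ee_k(\uu)$ of the proof of Proposition~\ref{prop:Omega=pp} to Newton's recursion $kh_k=\sum_{j=1}^{k}h_{k-j}p_j$ gives $\pp_k(\uu)\equiv k\hh_k(\uu)-\sum_{j=1}^{k-1}\hh_{k-j}(\uu)\pp_j(\uu)\pmod{\II_P}$, reducing \eqref{eq:pp_monomial} by induction on $k$ to the sign-free identity $k\hh_k(\uu)\equiv\sum_{j=1}^{k}\hh_{k-j}(\uu)\bigl(\sum_{\ww\in B_j}u_\ww\bigr)\pmod{\II_P}$, which can be attacked by a bijection modeled on the classical one for $kh_k=\sum_j p_jh_{k-j}$, the reduction modulo $\II_P$ being the one new subtlety.)
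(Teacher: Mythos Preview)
Your approach is the paper's: it too reduces everything to the signed-sum identity $\pp_k(\uu)=\sum_{r\ge 1}(-1)^{r-1}\sum_{\ww\in\SSS_r}u_\ww$ (your $\mathcal C_k$ stratified by $m=|\Des_P(\ww)|$, with $\SSS_r=\{\ww:\Des_P(\ww)=\{1,\dots,r-1\}\}$) and finishes with a sign-reversing involution built solely from the commutations~\eqref{eq:generator_I1}. The one place where the paper resolves what you flag as the obstacle is the choice of pivot: rather than the \emph{first} nontrivial left-to-right $P$-maximum, it takes (following \cite{SW2016}) the \emph{largest} index $i$ with $\ww_i>_P\ww_j$ for all $r\le j<i$; then either $\ww_i$ is moved to the front (if $\ww_i>_P\ww_1$, landing in $\SSS_{r+1}$) or $\ww_1$ is slid rightward to the last position it still $P$-dominates (otherwise, landing in $\SSS_{r-1}$), and with this ``last'' convention the verification that the two moves are mutually inverse with fixed-point set exactly $\NN_k$ is routine --- whereas your ``first'' convention is harder to invert once $m\ge 2$, precisely the difficulty you anticipated.
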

\begin{proof}
Once \eqref{eq:pp_monomial} is verified, the second assertion immediately follows from Corollary~\ref{cor:strategy}, \eqref{eq:Omega=pp} and the definition of $\pp_\lambda(\uu)$. Thus we only prove \eqref{eq:pp_monomial}.

Fix an integer $k>0$. Let $\SSS_r$ be the set of words of length $k$ whose $P$-descent set is $\{1,2,\dots,r-1\}$, i.e., $\SSS_r = \{\ww=\ww_1\cdots\ww_k \mid \ww_1>_P\cdots >_P \ww_r\ngtr_P \ww_{r+1}\ngtr_P\cdots\ngtr_P \ww_k  \}$ for $1\le r \le k$.
By \eqref{eq:def_ee} and \eqref{eq:hh}, we have
\[
\ee_r(\uu) \hh_{k-r}(\uu) = \sum_{ \ww \in \SSS_{r}} u_\ww + \sum_{\ww \in \SSS_{r+1}} u_\ww
\]
for $1\le r < k$.
The definition \eqref{eq:def_pp} of $\pp_k(\uu)$ and the above equation yield that
\[
\pp_k(\uu) = \sum_{r=1}^{k} \sum_{\ww\in\SSS_r} (-1)^{r-1} u_\ww.
\]
Now we construct a sign-reversing involution $\psi$ on $\bigsqcup_r \SSS_r$, which is essentially the same as the one introduced in \cite{SW2016}. Let $\NN_k$ be the set of words of length $k$ with no $P$-descents and no nontrivial left-to-right $P$-maxima. Then $\NN_k\subset\SSS_1\subset \bigsqcup_r \SSS_r$. For $\ww\in\SSS_r$, define $\psi(\ww)$ as follows:
First, if $r=1$ and $\ww\in\NN_k$, then let $\psi(\ww) := \ww$ so that $\ww$ is fixed by $\psi$.
Otherwise, let $i$ be the largest integer such that $\ww_i >_P \ww_j$ for all $r\le j<i$. The fact that $\ww$ is not in $\NN_k$ guarantees the existence of such $i$ and $i>1$.
If $\ww_i >_P \ww_1$, then let $\psi(\ww) := \ww_i \ww_1 \cdots \ww_{i-1}\ww_{i+1}\cdots\ww_k$. In this case, $\psi(\ww)\in\SSS_{r+1}$.
If $\ww_i\ngtr_P\ww_1$, then $\psi(\ww) := \ww_2\cdots\ww_\ell\ww_1\ww_{\ell+1}\cdots\ww_k$ where $\ell$ is the largest integer such that $\ww_1>_P\ww_j$ for all $r\le j\le \ell$, then $\psi(\ww)\in\SSS_{r-1}$. Also one can check that $\psi$ is an involution. Therefore $\psi$ is a sign-reversing involution on $\bigsqcup_r \SSS_r$, and $\NN_k$ is the set of fixed points of $\psi$. To finish the proof, we need that $\psi(\ww)\equiv\ww$ modulo $\II_P$, but it follows from the generators~\eqref{eq:generator_I1} of $\II_P$.
\end{proof}
\begin{exam} \label{ex:p_expansion}
    Let us compute the coefficient \( b_{3,1}(q) \) in our running example.
    Similar to Example~\ref{ex:f_expansion}, it is enough to find words
    \( \ww=\ww_1 \ww_2 \ww_3 \ww_4 \) such that the subword \( \ww_1 \ww_2 \ww_3 \)
    has no \( P \)-descents and no nontrivial left-to-right \( P \)-maxima:
    \begin{center}
        \includegraphics[scale=1]{./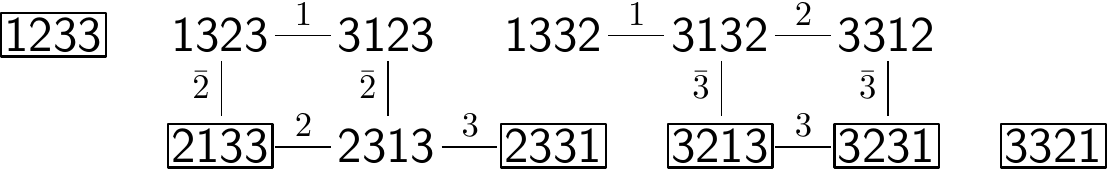}
    \end{center}
    Thus we conclude \( b_{3,1}(q) = q^3 + 2 q^2 + 2 q + 1 \).
\end{exam}

Using the correspondence between words and heaps, one can easily check that
each left-to-right \( P \)-maximum corresponds a sink in the corresponding heap.
Hence we interpret a word with no $P$-descents and no nontrivial left-to-right
$P$-maxima to be a heap with a unique sink.
Using Proposition~\ref{prop:unique_word_and_hh}, we can restate
\eqref{eq:pp_monomial} as follows.
\begin{cor} \label{cor:pp=unique_sink}
We have
\begin{equation} \label{eq:pp=unique_sink}
\pp_k(\uu) \equiv \sum_H u_{\ww_H} \mod \II_P,
\end{equation}
where $H$ ranges over all heaps consisting of $k$ blocks with a unique sink.
\end{cor}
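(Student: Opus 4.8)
The plan is to deduce the statement directly from the $\uu$-monomial expansion \eqref{eq:pp_monomial} of $\pp_k(\uu)$ in Theorem~\ref{thm:p-positive}, by reindexing its right-hand side from words to heaps. Recall that \eqref{eq:pp_monomial} reads $\pp_k(\uu) \equiv \sum_{\ww} u_\ww \bmod \II_P$, where $\ww$ ranges over all length-$k$ words with no $P$-descents and no nontrivial left-to-right $P$-maxima. Thus it suffices to exhibit a bijection between this set of words and the set of heaps $H$ with $k$ blocks having a unique sink, under which $\ww = \ww_H$.

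First I would invoke Proposition~\ref{prop:unique_word_and_hh}: every heap $H$ with $k$ blocks has a unique descent-free word $\ww_H \in W(H)$, and conversely every length-$k$ word $\ww$ with no $P$-descents belongs to $W(H)$ for the unique heap $H$ obtained by piling blocks according to $\ww$, whence $\ww = \ww_H$. Therefore $H \mapsto \ww_H$ is a bijection from heaps with $k$ blocks onto the set of length-$k$ descent-free words. It then remains to check that, under this bijection, the condition that $\ww$ has no nontrivial left-to-right $P$-maxima corresponds to $H$ having a unique sink. This is exactly the interpretation recorded in the paragraph just before the statement: the left-to-right $P$-maxima of a word are in correspondence with the sinks of the associated heap. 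Since $\ww_1$ is always a left-to-right $P$-maximum and every heap has at least one sink, ``$\ww_H$ has no nontrivial left-to-right $P$-maximum'' is equivalent to ``$\ww_H$ has exactly one left-to-right $P$-maximum'', hence to ``$H$ has exactly one sink''. Substituting $\ww = \ww_H$ in \eqref{eq:pp_monomial} then rewrites $\sum_\ww u_\ww$ as $\sum_H u_{\ww_H}$ with $H$ ranging over heaps of $k$ blocks with a unique sink, which is \eqref{eq:pp=unique_sink}.

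The only step carrying any real content is the correspondence between left-to-right $P$-maxima of a word and sinks of its heap, which is the fact flagged as routine in the preceding paragraph: concretely, one checks that the $i$th letter is a left-to-right $P$-maximum exactly when the block it contributes is $P$-comparable to --- hence non-adjacent to, hence not covered by --- every earlier block, and that the blocks arising in this way are exactly the sinks of the completed heap. Given the heap formalism already in place this is bookkeeping rather than a genuine obstacle, and the remainder of the argument is a formal reindexing.
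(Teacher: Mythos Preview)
Your proposal is correct and follows essentially the same argument as the paper, which likewise deduces the corollary from Theorem~\ref{thm:p-positive} by invoking Proposition~\ref{prop:unique_word_and_hh} together with the observation (stated just before the corollary) that left-to-right $P$-maxima of a descent-free word correspond to sinks of its heap. One small remark: in your last paragraph, ``$P$-comparable to every earlier block'' only forces $\ww_i >_P \ww_j$ (rather than $\ww_i <_P \ww_j$) once you use the no-$P$-descent hypothesis, so you may want to make that dependence explicit.
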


\begin{rmk}
In \cite{GKLLRT1995}, the authors defined two kinds of noncommutative power sum symmetric functions.
The noncommutative $P$-power sum symmetric functions are an analogue
of the noncommutative power sum symmetric functions \emph{of first kind},
and we do not consider an analogue of that of second kind in this paper.
For $p$-positivity of $\omega X_P(\xx,q)$, Alexandersson and
Sulzgruber~\cite[Theorem~7.4]{AS2021} gave a more general result.
They proved that for an arbitrary graph $G$ on $[n]$, $\omega X_G(\xx,q)$ is
$\Psi$-positive where $\Psi$ is the quasisymmetric power sum basis of first kind,
which is the dual basis of the noncommutative power sum basis of first kind.
Moreover, they described the coefficients in $\Psi$ expansion via acyclic
orientations. Our approach extends their result to the (multi-)chromatic
quasisymmetric functions $\omega X_P(\xx,q;\mu)$ of the incomparability graph of an arbitrary poset $P$.
The reason why our noncommutative approach is still valid for this case is that properties of natural unit interval orders are not used in Definition~\ref{def:def_pp} and Theorem~\ref{thm:p-positive}. The only place which relies on properties of $P$ (more precisely, properties of local flips) is in \eqref{eq:Omega=pp}.
But, from the duality between noncommutative power sum symmetric functions and
power sum quasisymmetric functions of first kind, $\Omega(\xx,\uu)$ can be written in terms of noncommutative power sum symmetric functions and the power sum quasisymmetric functions. Therefore one can deduce that $\omega X_P(\xx,q;\mu)$ is $\Psi$-positive, and the coefficients in the $\Psi$-expansion can be described as generating functions of certain words similar to $b_\lambda(q)$ in Theorem~\ref{thm:p-positive}, or equivalently as generating functions of certain heaps.
In fact, when we modify some of our settings, this noncommutative approach gives $\Psi$-positivity of $\omega X_G(\xx,q;\mu)$ for an arbitrary graph $G$. But that modification is outside the scope of this paper, so we omit that.
\end{rmk}

\subsection{Noncommutative \texorpdfstring{$P$}{P}-Schur functions} \label{subsec:Noncomm_Schur}
Schur functions play a central role in algebraic combinatorics.
Since expanding a given symmetric function into Schur functions provides a bridge
between combinatorics and other fields of mathematics,
finding a nice combinatorial model describing the Schur coefficients of the
symmetric function is one of the main questions in the theory of symmetric
functions.
In this subsection, we define noncommutative $P$-Schur functions via the dual
Jacobi--Trudi identity, and describe them in terms of semistandard $P$-tableaux
as the Schur functions. Using this and the duality from \eqref{eq:Omega=ss},
we provide the Schur expansions of $X_P(\xx,q;\mu)$.
The notion of $P$-tableaux was introduced by Gasharov~\cite{Gasharov1996} to prove
$s$-positivity of $X_P(\xx,1;\mu)$. Subsequently, Shareshian and Wachs~\cite{SW2016}
showed $s$-positivity of $X_P(\xx,q)$.

First, we recall the dual Jacobi--Trudi identity which presents a relation between Schur functions $s_\lambda(\xx)$ and elementary symmetric functions $e_\mu(\xx)$:
for a partition \( \lambda \),
\[
s_\lambda(\xx) = \sum_{\sigma\in\mathfrak{S}_m} \sgn(\sigma) e_{\lambda'_1+\sigma(1)-1}(\xx) e_{\lambda'_2+\sigma(2)-2}(\xx)\cdots e_{\lambda'_m+\sigma(m)-m}(\xx)
\]
where $\lambda'$ is the conjugation of $\lambda$ and $m=\lambda_1$.

\begin{defn} \label{def:ss}
For a partition $\lambda$, we define the \emph{noncommutative $P$-Schur function} $\sch_\lambda$ by
\begin{equation} \label{eq:def_ss}
\sch_\lambda(\uu) = \sum_{\sigma\in\mathfrak{S}_m} \sgn(\sigma) \ee_{\lambda'_1+\sigma(1)-1}(\uu) \ee_{\lambda'_2+\sigma(2)-2}(\uu) \cdots \ee_{\lambda'_m+\sigma(m)-m}(\uu),
\end{equation}
where $\lambda'$ is the conjugation of $\lambda$ and $m=\lambda_1$.
\end{defn}
Similar to the Schur functions, we will provide a combinatorial description of
noncommutative $P$-Schur functions. Before describing $\sch_\lambda(\uu)$
combinatorially, we point out the following congruence about the
noncommutative $P$-Cauchy product, and this gives us a duality
between the Schur expansion of $\omega X_P(\xx,q;\mu)$ and $\sch_\lambda(\uu)$.
The proof is similar to the proof of Proposition~\ref{prop:Omega=pp},
so we omit it.
\begin{prop} \label{prop:Omega=ss}
  We have
  \begin{equation} \label{eq:Omega=ss}
    \Omega(\xx,\uu) \equiv \sum_\lambda s_\lambda(\xx) \sch_\lambda(\uu) \mod \II_P[[\xx]].
  \end{equation}
\end{prop}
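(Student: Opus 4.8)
The plan is to mimic exactly the argument used for Proposition~\ref{prop:Omega=pp}, exploiting the fact that the noncommutative $P$-Schur functions were defined via the dual Jacobi--Trudi identity precisely so that they obey the same algebraic relations as the ordinary Schur functions. First I would introduce the ring $\Sym(\yy)$ of symmetric functions in auxiliary commuting variables $y_1, y_2, \dots$, using the algebraic independence of $\{e_k(\yy)\}_{k\ge 1}$ to define the ring homomorphism $\phi:\Sym(\yy)\rightarrow\UU$ by $\phi(e_k(\yy)) = \ee_k(\uu)$. Extending $\phi$ coefficientwise to power series in $\xx$ (with $\xx$ commuting with $\uu$), the definition of $\Omega(\xx,\uu)$ together with \eqref{eq:Omega=mh} gives $\phi(C(\xx,\yy)) \equiv \Omega(\xx,\uu) \mod \II_P[[\xx]]$, where $C(\xx,\yy)$ is the ordinary Cauchy product.

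Next I would observe that, because the dual Jacobi--Trudi formula \eqref{eq:schur_ordinary_lattice_path} expresses $s_\lambda(\yy)$ as a fixed (universal) polynomial in the $e_k(\yy)$ and Definition~\ref{def:ss} expresses $\sch_\lambda(\uu)$ by the identical polynomial in the $\ee_k(\uu)$, we have $\phi(s_\lambda(\yy)) = \sch_\lambda(\uu)$ on the nose — no congruence is even needed here, since both sides are literally $\phi$ applied to the same expression. Applying $\phi$ to the Cauchy identity \eqref{eq:comm_Cauchy_ss}, $C(\xx,\yy) = \sum_\lambda s_\lambda(\xx)s_\lambda(\yy)$, and using that $\phi$ is the identity on the $\xx$-variables, we obtain
\[
  \Omega(\xx,\uu) \equiv \phi(C(\xx,\yy)) = \sum_\lambda s_\lambda(\xx)\,\phi(s_\lambda(\yy)) = \sum_\lambda s_\lambda(\xx)\,\sch_\lambda(\uu) \mod \II_P[[\xx]],
\]
which is exactly \eqref{eq:Omega=ss}.

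There is no serious obstacle: the only point requiring a moment's care is that $\phi$ is only a homomorphism of $\mathbb{Z}$-algebras into the \emph{noncommutative} ring $\UU$, so one must check that $\phi(C(\xx,\yy))$ really equals $\Omega(\xx,\uu)$ modulo $\II_P[[\xx]]$ rather than merely in $\UU[[\xx]]$ — this is where Theorem~\ref{thm:e_k_commute}, guaranteeing that the $\ee_k(\uu)$ pairwise commute mod $\II_P$, is used, so that the image of the commutative ring $\Sym(\yy)$ is well-defined modulo $\II_P$ and the expansion $C(\xx,\yy) = \sum_\lambda m_\lambda(\xx)h_\lambda(\yy)$ maps correctly onto \eqref{eq:Omega=mh}. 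All of this is identical to the corresponding step in the proof of Proposition~\ref{prop:Omega=pp}, so in fact I would simply write ``the proof is identical to that of Proposition~\ref{prop:Omega=pp}, replacing \eqref{eq:comm_Cauchy_pp} and the relation \eqref{eq:relation_peh} by \eqref{eq:comm_Cauchy_ss} and the dual Jacobi--Trudi identity,'' which is presumably why the authors chose to omit it.
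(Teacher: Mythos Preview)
Your proposal is correct and matches the paper's intended argument exactly: the paper simply states that the proof is similar to that of Proposition~\ref{prop:Omega=pp} and omits it, and your reconstruction via the homomorphism $\phi:\Sym(\yy)\to\UU$ sending $e_k(\yy)\mapsto\ee_k(\uu)$, combined with the dual Jacobi--Trudi identity and \eqref{eq:comm_Cauchy_ss}, is precisely what is meant. Your closing remark that one could just cite Proposition~\ref{prop:Omega=pp} with the obvious substitutions is exactly what the authors did.
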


\begin{defn}
For a partition $\lambda$, a \emph{semistandard $P$-tableau of shape $\lambda$} is
a filling of the Young diagram of shape $\lambda$ with $[n]$ satisfying that
\begin{enumerate}[label=(\roman*)]
\item each row is non-$P$-decreasing from left to right, and
\item each column is $P$-increasing from top to bottom.
\end{enumerate}
A semistandard $P$-tableau $T$ is of \emph{type $\mu=(\mu_1,\dots,\mu_n)$} if each $i\in [n]$ appears $\mu_i$ times in $T$. We denote the set of all semistandard $P$-tableaux of shape $\lambda$ by $\TT_P(\lambda)$. The \emph{reading word} $\ww(T)$ of $T$ is the word obtained by reading $T$ from bottom to top, beginning with the leftmost column of $T$ and working from left to right. 
\end{defn}
\begin{exam}
Let $P=P(2,4,4,5,5)$ and $\lambda = (4,2,1)$. In Figure~\ref{fig:tableaux},
the first one is a semistandard $P$-tableau of type $(1,1,1,1,3)$ and the reading
word is $\mathsf{5315254}$.
The others are not semistandard $P$-tableaux because the first column of the second
tableau is not $P$-increasing and the first row of the last tableau is not
non-$P$-decreasing.
\begin{figure}
\centering
\ytableaushort{1254,35,5} \qquad
\ytableaushort{1254,45,5} \qquad
\ytableaushort{1253,35,5}
\caption{}
\label{fig:tableaux}
\end{figure}
\end{exam}
\begin{rmk}
  Gasharov~\cite{Gasharov1996} and Shareshian--Wachs~\cite{SW2016} defined
  $P$-tableaux slightly differently.
  Their definition is the conjugate version of ours. We use the above definition
  which is parallel to the definition of semistandard Young tableaux.
\end{rmk}

\begin{thm} \label{thm:s-positive}
We have
\begin{equation} \label{eq:ss_P-tabluea}
\sch_\lambda(\uu) \equiv \sum_{T\in\TT_P(\lambda)} u_{\ww(T)} \mod \II_P.
\end{equation}
Consequently, $\omega X_P(\xx,q;\mu)$ is $s$-positive and its coefficient of $s_\lambda$ counts semistandard $P$-tableaux of shape $\lambda$ and of type $\mu$. In other words,
\[
\omega X_P(\xx,q;\mu) = \sum_{T} q^{\inv_P(T)} s_{\sh(T)}(\xx),
\]
where $T$ ranges over all semistandard $P$-tableaux of type $\mu$ and $\sh(T)$ denotes the shape of $T$.
\end{thm}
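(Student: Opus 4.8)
The plan is to mirror the classical proof that the (dual) Jacobi--Trudi determinant equals the sum over semistandard Young tableaux, replacing the usual lattice-path argument with its noncommutative $P$-analogue. First I would expand each $\ee_{\lambda'_i + \sigma(i) - i}(\uu)$ appearing in \eqref{eq:def_ss} as a sum of $\uu$-monomials $u_{i_1}u_{i_2}\cdots$ with $i_1 >_P i_2 >_P \cdots$, using \eqref{eq:def_ee}. This realizes $\sch_\lambda(\uu)$ as a signed sum over tuples of $P$-strictly-decreasing words, which I will encode as families of $n$-colored (or $P$-labeled) non-intersecting lattice paths in the same way that Lindstr\"om--Gessel--Viennot realizes the ordinary dual Jacobi--Trudi identity: the $i$th path corresponds to the $i$th column of the would-be tableau, the vertical extent being $\lambda'_i$, and the permutation $\sigma$ records where the paths start. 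The crucial point is that when a path crosses into the region controlled by $\sigma \neq \mathrm{id}$, one can perform the usual path-swap at the first intersection; the sign of $\sigma$ cancels against the sign introduced by the swap, and the swap is realized \emph{at the level of $\uu$-monomials modulo $\II_P$}. This is where the generators \eqref{eq:generator_I1}--\eqref{eq:generator_I2} of $\II_P$ enter: swapping the tails of two intersecting paths amounts to a sequence of local flips, exactly as in the proof of Theorem~\ref{thm:e_k_commute} (in fact that theorem is the $\lambda = (1^\ell)$, $(1^k)$ case of precisely this mechanism). After the involution, the only surviving terms are the non-intersecting families with $\sigma = \mathrm{id}$, and these are in bijection with fillings of the Young diagram of $\lambda$ whose columns are $P$-increasing top-to-bottom (from $\ee_{\lambda'_i}(\uu)$ being $P$-strictly-decreasing, read in the opposite direction) and whose rows are non-$P$-decreasing left-to-right (this is exactly the non-intersecting condition translated through the reading order) — that is, semistandard $P$-tableaux. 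The resulting $\uu$-monomial is $u_{\ww(T)}$ with the column-by-column, bottom-to-top reading word, which establishes \eqref{eq:ss_P-tabluea}.

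The main obstacle, and the step I would spend the most care on, is verifying that the path-swap is legitimate modulo $\II_P$ in full generality, not just in the commuting case. When two $P$-decreasing words (columns) interleave in the reading word and fail the row condition at some box, the rectification consists of repeatedly applying relations \eqref{eq:generator_I1} and \eqref{eq:generator_I2}; I would argue this by adapting the diagrammatic "switch the heights of a connected component" argument from the proof of Theorem~\ref{thm:e_k_commute}, now applied locally at the first descent rather than globally. One has to check that the swap produces a \emph{strictly smaller} configuration in some well-founded order (say, the total number of row-inversions), so that the involution terminates and is genuinely sign-reversing and fixed-point-free off the tableau locus; this is the heart of the Lindstr\"om--Gessel--Viennot argument and the $P$-deformation requires confirming that each elementary move stays within a single $\II_P$-congruence class, which Proposition~\ref{prop:algebraic_expression_local_flip} supplies.

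Granting \eqref{eq:ss_P-tabluea}, the consequences are immediate from the machinery already in place. Combining Proposition~\ref{prop:Omega=ss} with Corollary~\ref{cor:strategy} (taking $g_\lambda = s_\lambda$ and $\ff_\lambda = \sch_\lambda$) gives
\[
  \omega X_P(\xx,q;\mu) = \sum_\lambda \langle \sch_\lambda(\uu), \gamma_\mu \rangle\, s_\lambda(\xx),
\]
and by \eqref{eq:ss_P-tabluea} together with the definition $\gamma_\mu = \sum_{\ww\in W(\mu)} q^{\inv_P(\ww)}\ww$ we get
\[
  \langle \sch_\lambda(\uu), \gamma_\mu \rangle
    = \sum_{\substack{T\in\TT_P(\lambda)\\ \ww(T)\ \text{of type}\ \mu}} q^{\inv_P(\ww(T))}.
\]
Since $\inv_P(\ww(T))$ depends only on $T$ (write it $\inv_P(T)$) and since a semistandard $P$-tableau of type $\mu$ has reading word of type $\mu$, this is a polynomial in $q$ with nonnegative integer coefficients, proving $s$-positivity and giving the stated formula $\omega X_P(\xx,q;\mu) = \sum_T q^{\inv_P(T)} s_{\sh(T)}(\xx)$. (As noted after Corollary~\ref{cor:strategy}, the analogous statement for $K_{[H]}(\xx)$ — that its $s_\lambda$-coefficient counts $T \in \TT_P(\lambda)$ with $\ww(T) \in [H]$ — follows by pairing with $\gamma_{[H]}$ instead of $\gamma_\mu$, but we do not spell this out.)
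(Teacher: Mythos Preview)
Your proposal is correct and follows essentially the same route as the paper: expand $\sch_\lambda(\uu)$ via the dual Jacobi--Trudi definition, interpret terms as tuples of lattice paths, and cancel intersecting families by a Lindstr\"om--Gessel--Viennot tail-swap implemented modulo $\II_P$ using the bijection $\psi_{k,\ell}$ from the proof of Theorem~\ref{thm:e_k_commute}, leaving exactly the non-intersecting families which biject with semistandard $P$-tableaux. One small clarification: the swap is a genuine involution (choose the \emph{highest} intersection point, which is unchanged by the swap since only the tails below it are modified), so there is no need for a well-founded order or termination argument---applying the swap twice returns the original tuple.
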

\begin{proof}
As Theorem~\ref{thm:p-positive}, the second part of this theorem follows immediately from \eqref{eq:Omega=ss} and \eqref{eq:ss_P-tabluea}.
To show \eqref{eq:ss_P-tabluea}, we use an idea of the
Lindstr\"om-Gessel-Viennot lemma, which is a useful tool for enumerating
non-intersecting paths (see \cite[Theorem~7.16.1]{EC2} for more details).

Let $P=P(\mm)$ where $\mm=(m_1,\dots,m_n)$.
Then we construct a edge-weighted graph with vertex set $\mathbb{Z}\times [n+1]$.
Two distinct vertices $(i, a)$ and $(j, b)$ with $a<b$ are adjacent by an edge $e$ if either
\begin{enumerate}[label=(\roman*)]
\item $i=j$ and $b = a+1$ (in this case, we assign $\wt(e)=1$), or
\item $i = j-1$ and $b = m_a + 1$ (in this case, we assign $\wt(e)=u_a$).
\end{enumerate}
For any $i,k\ge 0$, let $A=(i+k, n+1)$ and $B=(i,1)$, and let $\pathp$ be a path from $A$ to $B$. We define $\wt(\pathp) = \wt(e_1)\cdots\wt(e_r)$ where $\pathp = (A=p_0\overset{e_1}{\longrightarrow}p_1\overset{e_2}{\longrightarrow}\dots\overset{e_r}{\longrightarrow}p_r=B)$. Then by construction we have
\[
\ee_k(\uu) = \sum_\pathp \wt(\pathp)
\]
where the sum is over all paths $\pathp$ from $A$ to $B$. See Figure~\ref{fig:all_paths}.
For an $m$-tuple $(\pathp_1,\dots,\pathp_m)$ of paths, define $\wt(\pathp_1,\dots,\pathp_m) = \wt(\pathp_1)\cdots\wt(\pathp_m)$.
\begin{figure}
\centering
\includegraphics[width=0.18\linewidth]{./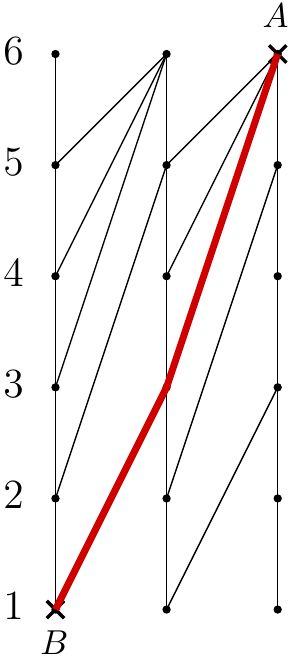}\qquad\quad
\includegraphics[width=0.18\linewidth]{./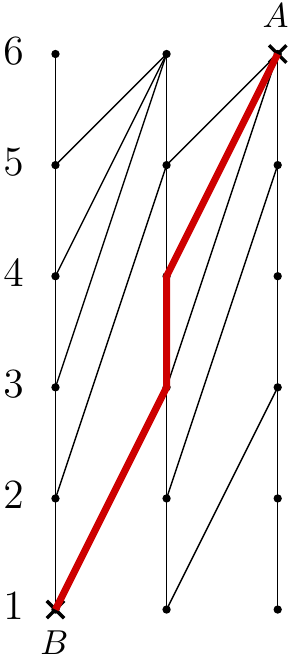}\qquad\quad
\includegraphics[width=0.18\linewidth]{./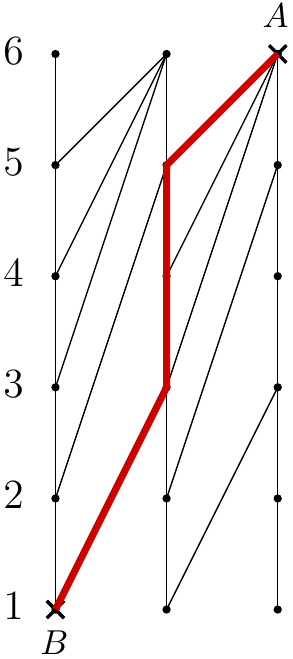}\qquad\quad
\includegraphics[width=0.18\linewidth]{./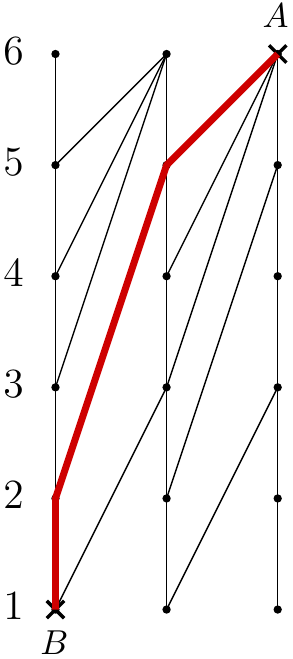}
\caption{For $P=P(2,4,5,5,5)$ and $k=2$, we illustrate all paths from $A$ to $B$. From left to right, the weights of paths are $u_3u_1$, $u_4u_1$, $u_5u_1$, and $u_5u_2$, respectively. Their summation coincides with $\ee_2(\uu) = u_3u_1+u_4u_1+u_5u_1+u_5u_2$.}
\label{fig:all_paths}
\end{figure}

For a partition $\lambda$, let $m=\lambda_1$. Also let
$A_i = (m-i+\lambda'_i, n+1)$ and $B_i=(m-i, 1)$ for $1\le i\le m$.
Then $\sch_\lambda(\uu)$ is equal to the signed sum of weights of $m$-tuples
$(\pathp_1,\dots,\pathp_m)$ of paths from $(A_1,\dots,A_m)$ to
$(B_{\sigma(1)},\dots,B_{\sigma(m)})$ over all $\sigma\in\mathfrak{S}_m$. 
Since our weights on edges (and paths) do not commute, we cannot apply the 
Lindstr\"om--Gessel--Viennot lemma directly. Hence we modify the
Lindstr\"om--Gessel--Viennot lemma, which will work on our case modulo $\II_P$.

We construct a sign-reversing involution on the $m$-tuples of paths, whose fixed
points are $m$-tuples of non-intersecting paths from $(A_1,\dots,A_m)$ to
$(B_1,\dots,B_m)$. Suppose that $(\pathp_1,\dots,\pathp_m)$ be an $m$-tuple of
paths such that there are some intersecting points.
Choose the highest intersecting point $p$ (if there are more than one such points,
we choose the leftmost one among them.).
Then only two paths intersect at $p$, and their starting positions are next to each other, so we denote them by $A_k$ and $A_{k+1}$. We can split $\pathp_k$ into two paths $\pathp_k^{(1)}$ and $\pathp_k^{(2)}$ such that $\pathp_k^{(1)}$ (respectively, $\pathp_k^{(2)}$) is the path from $A_k$ to $p$ (respectively, from $p$ to $B_{\sigma(k)}$ for some $\sigma\in\mathfrak{S}_m$).
We write $\pathp_k = (A_k\overset{\pathp_k^{(1)}}{\longrightarrow} p \overset{\pathp_k^{(2)}}{\longrightarrow} B_{\sigma(k)})$.
Similarly, we split $\pathp_{k+1}$ as $\pathp_{k+1} = (A_{k+1}\overset{\pathp_{k+1}^{(1)}}{\longrightarrow} p \overset{\pathp_{k+1}^{(2)}}{\longrightarrow} B_{\sigma(k+1)})$.
Also let $\wt(\pathp_k^{(2)})=u_{\ww}$ and $\wt(\pathp_{k+1}^{(2)})=u_{\vv}$ for some words $\ww\in\EE_{r}$ and $\vv\in\EE_{s}$ where $\EE_\ell$ is given in the proof of Theorem~\ref{thm:e_k_commute}. Let $(\vv',\ww')=\psi_{r, s}(\ww, \vv)$, and then there is the path $\pathp'_k$ (respectively, $\pathp'_{k+1}$) from $p$ to $B_{\sigma(k)}$ (respectively, $B_{\sigma(k+1)}$) whose weight is $u_{\ww'}$ (respectively, $u_{\vv'}$).
We now define a map as follows:
\[
(\pathp_1,\dots,\pathp_m)\longmapsto
\begin{cases}
~ (\pathp_1,\dots,\pathp_m) & \begin{array}{l}
    \mbox{if $(\pathp_1,\dots,\pathp_m)$ have} \\
    \quad\mbox{no intersecting point,}
  \end{array} \\
~ (\pathp_1,\dots,\widetilde{\pathp}_k, \widetilde{\pathp}_{k+1},\dots,\pathp_m) & ~\mbox{ otherwise}, 
\end{cases}
\]
where $\widetilde{\pathp}_k=(A_k\overset{\pathp_k^{(1)}}{\longrightarrow} p \overset{\pathp'_{k+1}}{\longrightarrow} B_{\sigma(k+1)})$ and $\widetilde{\pathp}_{k+1}=(A_{k+1}\overset{\pathp_{k+1}^{(1)}}{\longrightarrow} p \overset{\pathp'_k}{\longrightarrow} B_{\sigma(k)})$. Obviously, this map is a sign-reversing involution. Furthermore it preserves weights modulo $\II_P$ because
\begin{align*}
\wt(\pathp_k)\wt(\pathp_{k+1})
&= \wt(\pathp_k^{(1)})\cdot u_\ww\cdot \wt(\pathp_{k+1}^{(1)})\cdot u_\vv \\
&\equiv \wt(\pathp_k^{(1)})\cdot \wt(\pathp_{k+1}^{(1)})\cdot u_\ww\cdot u_\vv \mod \II_P \\
&\equiv \wt(\pathp_k^{(1)})\cdot \wt(\pathp_{k+1}^{(1)})\cdot u_{\vv'}\cdot u_{\ww'} \mod \II_P & \mbox{by the property of $\psi_{r,s},$} \\
&\equiv \wt(\pathp_k^{(1)})\cdot u_{\vv'}\cdot \wt(\pathp_{k+1}^{(1)})\cdot  u_{\ww'} \mod \II_P \\
&= \wt(\widetilde{\pathp}_k)\wt(\widetilde{\pathp}_{k+1}).
\end{align*}

Therefore, with the above involution, the Lindstr\"om-Gessel-Viennot lemma yields
\[
  \sch_\lambda(\uu)\equiv \sum_{(\pathp_1,\dots,\pathp_m)}
    \wt(\pathp_1,\dots,\pathp_m) \mod \II_P,
\]
where $(\pathp_1,\dots,\pathp_m)$ ranges over all non-intersecting paths from
$(A_1,\dots,A_m)$ to $(B_1,\dots,B_m)$. One can easily construct a bijection
$\phi$ between such paths and semistandard $P$-tableaux of shape $\lambda$
satisfying $\wt(\pathp_1,\dots,\pathp_m)=u_{\ww(T)}$ where
$T=\phi(\pathp_1,\dots,\pathp_m)$. This completes the proof.
\end{proof}
\begin{exam}
    Returning to the running example, to obtain the coefficient \( a_{3,1}(q) \)
    of \( s_{3,1}(\xx) \) of \( \omega X_P(\xx,q;\mu) \), it suffices to find words
    \( \ww=\ww_1 \ww_2 \ww_3 \ww_4 \) such that \( \ww \) is a reading word
    for some semistandard \( P \)-tableaux of shape \( (3,1) \):
    \begin{center}
        \includegraphics[scale=1]{./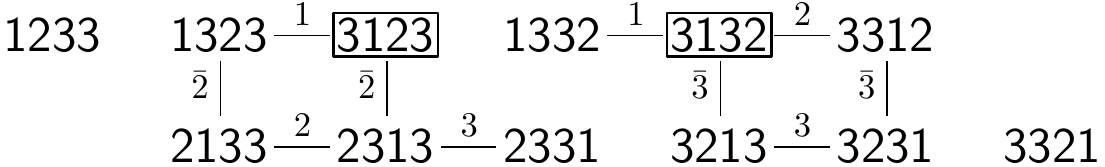}
    \end{center}
    Therefore we have \( a_{3,1}(q) = q^2 + q \).
\end{exam}
\begin{rmk}
    In \cite{KP2021}, the authors defined an equivalence relation on words,
    which differs from ours.
    In addition, they conjectured that the Schur expansions of equivalence classes
    are given by reading words of \( P \)-tableaux that appear in the equivalence
    class; see \cite[Conjecture~4.16]{KP2021}.
    Their equivalence classes are finer than ours, so Theorem~\ref{thm:s-positive}
    gives a weak answer for the conjecture.
\end{rmk}

\subsection{Noncommutative \texorpdfstring{$P$}{P}-monomial symmetric functions} \label{sec:Noncomm_mono}
We devote the remainder of this section to establish noncommutative $P$-monomial
symmetric functions.

Finding a combinatorial interpretation of the $e$-coefficients of chromatic
quasisymmetric functions is a famous long-standing open problem in algebraic
combinatorics. Thanks to the duality from \eqref{eq:Omega=hm} which will be
described, monomials appearing in the noncommutative $P$-monomial symmetric
function $\mono_\lambda(\uu)$ modulo $\II_P$ gives us the coefficient of
$e_\lambda(\xx)$ of $X_P(\xx,q;\mu)$ (equivalently, the coefficient of
$h_\lambda(\xx)$ of $\omega X_P(\xx,q;\mu)$).

Before defining the noncommutative $P$-monomial symmetric functions, consider the monomial symmetric functions $m_\lambda(\xx)$. The transition matrix between $\{m_\lambda(\xx)\}_{\lambda\in\Par}$ and $\{e_\lambda(\xx)\}_{\lambda\in\Par}$ has a simple combinatorial interpretation;
see \cite[Proposition~7.4.1 and Theorem~7.4.4]{EC2}.
\begin{thm} \label{thm:e=Mm_ordianry}
Let $e_\lambda(\xx) = \sum_\mu M_{\lambda,\mu} m_\mu(\xx)$. Then $M_{\lambda,\mu}$ is the number of $(0,1)$-matrices whose $i$th row sum equals $\lambda_i$, and $j$th column sum equals $\mu_j$ for all $i,j$. In particular,
\[
M_{\lambda,\mu} = 0 \mbox{ unless } \mu \trianglelefteq \lambda', \qand M_{\lambda, \lambda'} = 1.
\]
\end{thm}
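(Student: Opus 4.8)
The plan is to expand the product $e_\lambda(\xx) = e_{\lambda_1}(\xx)\cdots e_{\lambda_\ell}(\xx)$ directly and read off the coefficients. First I would recall that $e_r(\xx) = \sum_S \prod_{j\in S} x_j$, where $S$ runs over all $r$-element subsets of $\mathbb{P}$. Hence a term in the expanded product $e_\lambda(\xx)$ is obtained by choosing, for each $i$, a $\lambda_i$-element subset $S_i\subseteq\mathbb{P}$, and this term contributes $\prod_i\prod_{j\in S_i} x_j$. Encoding the tuple $(S_1,\dots,S_\ell)$ as the $(0,1)$-matrix $A$ with $A_{ij}=1$ iff $j\in S_i$, the $i$th row sum of $A$ is $|S_i|=\lambda_i$, and the exponent of $x_j$ in the corresponding monomial is $\#\{i : j\in S_i\}$, the $j$th column sum of $A$. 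Thus the coefficient of $x^\alpha$ in $e_\lambda(\xx)$ equals the number of $(0,1)$-matrices with row sums $\lambda_1,\dots,\lambda_\ell$ and column sums $\alpha_1,\alpha_2,\dots$. Since $e_\lambda(\xx)$ is symmetric and $x^\mu$ occurs with coefficient $1$ in $m_\mu(\xx)$ and in no other $m_\nu(\xx)$, specializing $\alpha=\mu$ (the partition itself, read as a weakly decreasing exponent vector) identifies $M_{\lambda,\mu}$ with the number of such matrices; permuting columns shows this count is unchanged if $\mu$ is replaced by any rearrangement, so the description does not depend on that choice.

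Next I would establish the dominance bound. Using $\sum_{j=1}^k \lambda'_j = \sum_{j=1}^k \#\{i : \lambda_i\ge j\} = \sum_i \min(\lambda_i,k)$, observe that in any $(0,1)$-matrix with row sums $\lambda$, any choice of $k$ columns contains at most $\sum_i \min(\lambda_i,k)$ ones, since row $i$ has only $\lambda_i$ ones in total and at most $k$ of them can lie in the $k$ selected columns. If $\mu$ is the partition obtained by sorting the column sums, then $\mu_1+\cdots+\mu_k$ is the number of ones in the $k$ heaviest columns, so $\mu_1+\cdots+\mu_k \le \sum_i\min(\lambda_i,k) = \lambda'_1+\cdots+\lambda'_k$; together with $|\mu|=|\lambda|=|\lambda'|$ this says exactly $\mu\trianglelefteq\lambda'$. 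Therefore $M_{\lambda,\mu}=0$ unless $\mu\trianglelefteq\lambda'$.

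Finally I would pin down $M_{\lambda,\lambda'}=1$. The matrix $A^\circ$ with $A^\circ_{ij}=1$ iff $j\le\lambda_i$ has row sums $\lambda_i$ and $j$th column sum $\#\{i:\lambda_i\ge j\}=\lambda'_j$, so at least one such matrix exists. For uniqueness, note its first column sum is $\lambda'_1=\ell(\lambda)$, the number of rows, which forces the first column of any matrix counted by $M_{\lambda,\lambda'}$ to consist entirely of ones; deleting that column yields a $(0,1)$-matrix with row sums $\lambda_i-1$ and column sums $\lambda'_2,\lambda'_3,\dots$, and since $(\lambda_1-1,\dots,\lambda_\ell-1)$ with its zero parts discarded has conjugate $(\lambda'_2,\lambda'_3,\dots)$, induction on $|\lambda|$ completes the argument.

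All steps are routine; the only point requiring a little care is the bookkeeping in the dominance inequality --- correctly identifying $\sum_{j\le k}\lambda'_j$ with $\sum_i\min(\lambda_i,k)$ and verifying that the $k$ heaviest columns cannot exceed this bound --- so I would regard that as the substantive content, with everything else a direct unwinding of definitions.
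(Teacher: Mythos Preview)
Your argument is correct and follows the standard route: expand the product, identify terms with $(0,1)$-matrices, then use the identity $\sum_{j\le k}\lambda'_j=\sum_i\min(\lambda_i,k)$ to get the dominance bound and a short induction for uniqueness at $\mu=\lambda'$. The paper itself does not prove this theorem at all---it simply quotes it as a known result, citing \cite[Proposition~7.4.1 and Theorem~7.4.4]{EC2}---so your write-up supplies a self-contained proof where the paper only gives a reference.
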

From this theorem, the matrix $(M_{\lambda, \mu'})_{\lambda,\mu}$ with reverse lexicographic order is a lower triangular matrix with 1's on its diagonal. Then we obtain immediately the following proposition.
\begin{prop} \label{prop:m=Ne}
Let $m_\lambda(\xx) = \sum_{\mu} N_{\lambda,\mu} e_\mu(\xx)$. Then $N_{\lambda,\mu} = 0$ unless $\mu' \trianglelefteq \lambda$, and $N_{\lambda,\lambda'} = 1$.
\end{prop}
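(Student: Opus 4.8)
The plan is to invert the transition relation in Theorem~\ref{thm:e=Mm_ordianry} by a standard triangularity argument, carried out with respect to the reverse lexicographic order $\le_{\lex}$, which (as noted in the excerpt) is a total order compatible with the dominance order $\trileq$.

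First I would record the key consequence of Theorem~\ref{thm:e=Mm_ordianry} in the form of a statement about a single matrix. Write $e_\lambda(\xx) = \sum_\mu M_{\lambda,\mu} m_\mu(\xx)$. Reindex by conjugating the second subscript: set $\widetilde{M}_{\lambda,\mu} := M_{\lambda,\mu'}$, so that $e_\lambda(\xx) = \sum_\mu \widetilde{M}_{\lambda,\mu} m_{\mu'}(\xx)$. Theorem~\ref{thm:e=Mm_ordianry} says $\widetilde{M}_{\lambda,\mu} = 0$ unless $\mu' \trileq \lambda'$, i.e. unless $\lambda \trileq \mu$ (conjugation reverses dominance), and $\widetilde{M}_{\lambda,\lambda} = 1$. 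Since $\lambda \trileq \mu$ forces $\mu \le_{\lex} \lambda$ whenever $\lambda \neq \mu$ — here I use that $\le_{\lex}$ refines $\trigeq$ — the matrix $\widetilde{M}$, with rows and columns listed in decreasing $\le_{\lex}$ order, is \emph{lower unitriangular}: $\widetilde{M}_{\lambda,\mu} = 0$ unless $\mu \le_{\lex} \lambda$, and $\widetilde{M}_{\lambda,\lambda}=1$.

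Next I would invert. A lower unitriangular (integer) matrix has a lower unitriangular inverse $\widetilde{N} = \widetilde{M}^{-1}$, again over $\mathbb{Z}$: $\widetilde{N}_{\lambda,\mu}=0$ unless $\mu \le_{\lex} \lambda$, and $\widetilde{N}_{\lambda,\lambda}=1$. Because $\{m_\mu(\xx)\}$ and $\{e_\mu(\xx)\}$ are both bases of $\Sym_n$ for each $n$ and the relation $e_\lambda = \sum_\mu \widetilde M_{\lambda,\mu} m_{\mu'}$ is, at each fixed degree, exactly multiplication by $\widetilde{M}$ (a finite matrix), the expansion $m_\lambda(\xx) = \sum_\mu N_{\lambda,\mu} e_\mu(\xx)$ is governed by the inverse: matching indices shows $N_{\lambda,\mu} = \widetilde{N}_{\lambda,\mu'}$. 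Now translate the vanishing back: $N_{\lambda,\mu} = \widetilde N_{\lambda,\mu'} = 0$ unless $\mu' \le_{\lex} \lambda$; and $\mu' \le_{\lex}\lambda$ with $\mu' \neq \lambda$ combined with degree-preservation — together with the fact that $\widetilde{N}$ vanishes strictly below the dominance order, which one gets by observing that the inverse of a matrix supported on $\{\mu : \lambda \trileq \mu\}$ is again supported there (this is the standard fact that Möbius-type inversion stays within the order ideal) — gives $\mu' \trileq \lambda$, equivalently $\mu' \trileq \lambda$ as claimed. Finally $N_{\lambda,\lambda'} = \widetilde N_{\lambda,\lambda} = 1$.

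\textbf{Main obstacle.} The only genuinely delicate point is justifying that the inverse matrix $\widetilde{N}$ is supported on the dominance order ideal $\{\mu : \lambda \trileq \mu\}$, not merely on the lexicographic ideal $\{\mu : \mu \le_{\lex} \lambda\}$ — i.e. upgrading ``$\mu' \le_{\lex} \lambda$'' to ``$\mu' \trileq \lambda$'' in the statement. This follows from the general principle that if $A$ is a matrix indexed by a poset with $A_{xy} = 0$ unless $x \trileq y$ and $A_{xx}$ invertible, then $A^{-1}_{xy}=0$ unless $x \trileq y$; one proves it by writing $A^{-1}$ via the Neumann series $A^{-1} = D^{-1}\sum_{k\ge 0}(I - D^{-1}A)^k$ (with $D$ the diagonal part), noting each term preserves the order ideal because matrix multiplication does, and that the series terminates since the strictly-off-diagonal part is nilpotent on a finite poset. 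With that lemma in hand the proof of Proposition~\ref{prop:m=Ne} is a one-line unwinding of conjugations; everything else is bookkeeping about which order refines which.
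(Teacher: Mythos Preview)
Your approach is exactly the paper's: observe that $(M_{\lambda,\mu'})_{\lambda,\mu}$ is unitriangular and invert. Two bookkeeping slips in your write-up happen to cancel: (i) $\lambda\trileq\mu$ gives $\lambda\le_{\lex}\mu$, not $\mu\le_{\lex}\lambda$ (dominance and reverse lex go the same way); and (ii) the correct reindexing is $N_{\lambda,\mu}=\widetilde N_{\lambda',\mu}$, not $\widetilde N_{\lambda,\mu'}$, since inverting $e_\lambda=\sum_\mu\widetilde M_{\lambda,\mu}m_{\mu'}$ yields $m_{\lambda'}=\sum_\mu\widetilde N_{\lambda,\mu}e_\mu$. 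With both corrected one gets $\widetilde N_{\lambda',\mu}=0$ unless $\lambda'\trileq\mu$, i.e.\ $\mu'\trileq\lambda$, as stated. Your Neumann-series remark that the inverse of a dominance-supported unitriangular matrix is again dominance-supported is a genuine addition: the paper asserts the dominance formulation ``immediately'' after passing through lex, and your argument fills that small gap.
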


\begin{defn} \label{def:mono}
For a partition $\lambda$, the \emph{noncommutative $P$-monomial symmetric function} $\mono_\lambda(\uu)$ is defined by
\begin{equation} \label{eq:def_mono}
\mono_\lambda(\uu) = \sum_{\mu} N_{\lambda,\mu} \ee_\mu(\uu),
\end{equation}
where $N_{\lambda,\mu}$ is given in Proposition~\ref{prop:m=Ne}.
\end{defn}
As \eqref{eq:comm_Cauchy_hm}, we can write $\Omega(\xx,\uu)$ via complete homogeneous symmetric functions $h_\lambda(\xx)$ and noncommutative $P$-monomial symmetric functions $\mono_\lambda(\uu)$.
\begin{prop} \label{prop:Omega=hm}
  We have
  \begin{equation} \label{eq:Omega=hm}
    \Omega(\xx,\uu) \equiv \sum_{\lambda} h_\lambda(\xx) \mono_\lambda(\uu) \mod \II_P[[\xx]].
  \end{equation}
\end{prop}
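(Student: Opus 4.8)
The plan is to mimic the proof of Proposition~\ref{prop:Omega=pp} (which follows \cite[Eq.~(2.15)]{BF2017}), transporting the classical identity \eqref{eq:comm_Cauchy_hm} through the specialization sending elementary symmetric functions to their noncommutative $P$-analogues. Let $\Sym(\yy)$ be the ring of symmetric functions in commuting variables $\yy=(y_1,y_2,\dots)$. Since $\{e_k(\yy)\}_{k\ge 1}$ is an algebraically independent generating set of $\Sym(\yy)$, and since by Theorem~\ref{thm:e_k_commute} the elements $\ee_1(\uu),\ee_2(\uu),\dots$ generate a commutative subring of $\UU/\II_P$, there is a unique ring homomorphism
\[
\phi\colon \Sym(\yy)\longrightarrow \UU/\II_P, \qquad \phi(e_k(\yy)) = \ee_k(\uu) + \II_P .
\]
As in the proof of Proposition~\ref{prop:Omega=pp}, extend $\phi$ coefficientwise (degree by degree, so no convergence issue arises) to power series involving the $\xx$-variables, with $\phi$ fixing each $x_i$.

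Next I would compute the images under $\phi$ of the two bases that matter. First, $\phi(e_\mu(\yy)) = \ee_\mu(\uu)$ by multiplicativity, hence $\phi(m_\lambda(\yy)) \equiv \mono_\lambda(\uu)$ modulo $\II_P$: indeed $m_\lambda(\yy) = \sum_\mu N_{\lambda,\mu} e_\mu(\yy)$ by Proposition~\ref{prop:m=Ne}, while $\mono_\lambda(\uu) = \sum_\mu N_{\lambda,\mu} \ee_\mu(\uu)$ by Definition~\ref{def:mono}, and $\phi$ is linear. Second, $\phi(h_k(\yy)) \equiv \hh_k(\uu)$ modulo $\II_P$ for every $k\ge 0$: applying $\phi$ to the relation \eqref{eq:relation_eh} and comparing with the recursion defining $\hh_k(\uu)$, an easy induction on $k$ (base case $\phi(h_0(\yy)) = 1 = \hh_0(\uu)$) gives the claim, and therefore $\phi(h_\lambda(\yy)) \equiv \hh_\lambda(\uu)$.

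Now I would evaluate $\phi$ on the Cauchy product in two ways. On the one hand, \eqref{eq:comm_Cauchy_mh} gives $C(\xx,\yy) = \sum_\lambda m_\lambda(\xx) h_\lambda(\yy)$, so $\phi(C(\xx,\yy)) \equiv \sum_\lambda m_\lambda(\xx) \hh_\lambda(\uu) \equiv \Omega(\xx,\uu)$ modulo $\II_P[[\xx]]$, the last congruence being \eqref{eq:Omega=mh}. On the other hand, \eqref{eq:comm_Cauchy_hm} gives $C(\xx,\yy) = \sum_\lambda h_\lambda(\xx) m_\lambda(\yy)$, so $\phi(C(\xx,\yy)) \equiv \sum_\lambda h_\lambda(\xx) \mono_\lambda(\uu)$ modulo $\II_P[[\xx]]$. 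Comparing the two evaluations of $\phi(C(\xx,\yy))$ yields \eqref{eq:Omega=hm}.

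I do not expect a serious obstacle; the only point that genuinely requires care is the well-definedness of $\phi$ as a ring homomorphism into $\UU/\II_P$, which is exactly where Theorem~\ref{thm:e_k_commute} enters (without the commutation relation one would only get a map out of a polynomial ring in abstract symbols, not one compatible with the relations among the $e_k(\yy)$). Everything else is the formal bookkeeping of pushing \eqref{eq:comm_Cauchy_mh} and \eqref{eq:comm_Cauchy_hm} through $\phi$, exactly parallel to Propositions~\ref{prop:Omega=pp} and~\ref{prop:Omega=ss}.
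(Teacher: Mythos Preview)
Your proposal is correct and follows exactly the approach the paper intends: the paper does not write out a proof for this proposition, but its placement after Definition~\ref{def:mono} and the explicit parallel to Propositions~\ref{prop:Omega=pp} and~\ref{prop:Omega=ss} make clear that the same transport-through-$\phi$ argument is meant, precisely as you carry it out. Your careful check that $\phi(m_\lambda(\yy)) \equiv \mono_\lambda(\uu)$ via the common transition coefficients $N_{\lambda,\mu}$ is the one step specific to this case, and you handle it correctly.
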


Let us first consider a general case. In his seminal paper~\cite{Stanley1995},
Stanley showed that $e$-coefficients of chromatic symmetric functions are
related to acyclic orientations of the graph, and Shareshian and Wachs~\cite{SW2016}
proved a refined result for chromatic quasisymmetric functions.
\begin{thm}[\cite{SW2016,Stanley1995}] \label{thm:c_lambda=acyclic}
For a natural unit interval order $P$, let $X_P(\xx,q) = \sum_\lambda c_\lambda(q) e_\lambda(\xx)$. Then for $k\ge 1$,
\[
\sum_{\ell(\lambda)=k} c_\lambda(q) = \sum_{\oo\in AO(P,k)} q^{\asc_P(\oo)},
\]
where $AO(P,k)$ is the set of all acyclic orientations of $P$ with $k$ sinks, and $\asc_P(\oo)$ is the number of ascent edges in $\oo$.
\end{thm}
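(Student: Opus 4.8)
The plan is to run the duality machinery of Corollary~\ref{cor:strategy} with the noncommutative $P$-monomial symmetric functions, then do a generating‑function computation. Applying that corollary with $g_\lambda(\xx)=h_\lambda(\xx)$ and $\ff_\lambda(\uu)=\mono_\lambda(\uu)$ (legitimate by Proposition~\ref{prop:Omega=hm}) and then applying $\omega$ gives $c_\lambda(q)=\langle \mono_\lambda(\uu),\gamma_{(1^n)}\rangle$, so that
\[
\sum_{\ell(\lambda)=k} c_\lambda(q)=[z^k]\Big\langle \sum_{\lambda}z^{\ell(\lambda)}\mono_\lambda(\uu),\ \gamma_{(1^n)}\Big\rangle .
\]
To evaluate the right‑hand side I would transfer the classical identity $\sum_{\lambda}m_\lambda(\xx)z^{\ell(\lambda)}=\big(\sum_{j\ge0}(z-1)^je_j(\xx)\big)\big(\sum_{i\ge0}h_i(\xx)\big)$ (which comes from $1+\frac{zx}{1-x}=\frac{1-(1-z)x}{1-x}$) to the noncommutative side: since $\{e_j(\xx)\}_{j\ge1}$ is an algebraically independent generating set and the $\ee_j(\uu)$ commute modulo $\II_P$ (Theorem~\ref{thm:e_k_commute}), the assignment $e_j(\xx)\mapsto\ee_j(\uu)$ is a ring homomorphism $\Sym\to\UU/\II_P$ carrying $m_\lambda\mapsto\mono_\lambda$ and $h_i\mapsto\hh_i$, exactly as in the proof of Proposition~\ref{prop:Omega=pp}. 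Applying it yields
\[
\sum_{\lambda}z^{\ell(\lambda)}\mono_\lambda(\uu)\ \equiv\ \Big(\sum_{j\ge0}(z-1)^j\ee_j(\uu)\Big)\Big(\sum_{i\ge0}\hh_i(\uu)\Big)\pmod{\II_P}.
\]

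Next I would expand the product and pair with $\gamma_{(1^n)}\in\II_{P,q}^\perp$. By \eqref{eq:def_ee} and \eqref{eq:hh}, $\ee_a(\uu)\hh_b(\uu)=\sum_{\ww}u_\ww$ over words $\ww=\ww_1\cdots\ww_{a+b}$ with $\ww_1>_P\cdots>_P\ww_a$ and $\ww_{a+1}\not>_P\cdots\not>_P\ww_{a+b}$; only $a+b=n$ contributes. Pairing with $\gamma_{(1^n)}=\sum_{\ww\in W((1^n))}q^{\inv_P(\ww)}\ww$ and summing over $a$, a permutation word $\ww$ of $[n]$ receives the coefficient $\sum_a(z-1)^a$ over those $a$ with $\{1,\dots,a-1\}\subseteq\Des_P(\ww)\subseteq\{1,\dots,a\}$. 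This index set is empty unless $\Des_P(\ww)$ is an initial segment $\{1,\dots,r\}$ of $[n-1]$, in which case it equals $\{r,r+1\}$ and the coefficient is $(z-1)^r+(z-1)^{r+1}=z(z-1)^r$. Using $W((1^n))=\bigsqcup_{\oo}W(\oo)$ over acyclic orientations $\oo$ of $\inc(P)$ and $\inv_P(\ww)=\asc_P(\oo)$ for $\ww\in W(\oo)$, we obtain
\[
\Big\langle \sum_{\lambda}z^{\ell(\lambda)}\mono_\lambda(\uu),\gamma_{(1^n)}\Big\rangle=\sum_{\oo}q^{\asc_P(\oo)}\sum_{r\ge0}N_r(\oo)\,z(z-1)^r,\qquad N_r(\oo):=\#\{\ww\in W(\oo):\Des_P(\ww)=\{1,\dots,r\}\}.
\]
So the theorem reduces to the counting lemma: \emph{if $\oo$ has exactly $k$ sinks, then $N_r(\oo)=\binom{k-1}{r}$.} Granting this, $\sum_r N_r(\oo)z(z-1)^r=z\big(1+(z-1)\big)^{k-1}=z^k$, the right‑hand side becomes $\sum_\oo q^{\asc_P(\oo)}z^{\#\mathrm{sinks}(\oo)}$, and extracting $[z^k]$ completes the proof.

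For the counting lemma I would argue as follows. Any two distinct sinks of $\oo$ are unrelated in the heap poset, hence nonadjacent in $\inc(P)$, so their underlying vertices form a chain $j_1>_P\cdots>_P j_k$ in $P$. If $\ww\in W(\oo)$ has $\Des_P(\ww)=\{1,\dots,r\}$, then $\ww_1>_P\cdots>_P\ww_{r+1}$ forces each $\ww_i$ ($i\le r+1$) to be $P$‑comparable to all of $\ww_1,\dots,\ww_{i-1}$, hence nonadjacent to and so unrelated in $\oo$ to each of them; inductively $\ww_1,\dots,\ww_{r+1}$ are $r+1$ distinct sinks of $\oo$ listed $P$‑decreasingly, and $\ww_{r+2}\cdots\ww_n$ is the unique $P$‑descent‑free word of the heap $\oo\setminus S$, where $S=\{\ww_1,\dots,\ww_{r+1}\}$ (Proposition~\ref{prop:unique_word_and_hh}). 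Conversely, for any size‑$(r{+}1)$ subset $S$ of the sinks, the word ($S$ in $P$‑decreasing order) followed by $\ww_{\oo\setminus S}$ is a linear extension of $\oo$, hence lies in $W(\oo)$; its $P$‑descent set is $\{1,\dots,r\}$ exactly when position $r+1$ is not a $P$‑descent, i.e. when the first letter of $\ww_{\oo\setminus S}$ — the $P$‑minimal sink of $\oo\setminus S$ — is not $<_P\min_P S$. The key structural point is that this happens precisely when $j_k\in S$: if $j_k\in S$ then $\min_P S=j_k$ is $P$‑minimal among all sinks of $\oo$, and no sink of $\oo\setminus S$ can lie strictly $P$‑below it (an old surviving sink is some $j_i>_P j_k$; a newly exposed sink $v$ covers some $s\in S$, and being incomparable to $s$ while $s\ge_P j_k$ it cannot satisfy $v<_P j_k$); whereas if $j_k\notin S$ then $j_k$ survives as the $P$‑minimal sink of $\oo\setminus S$ and $j_k<_P\min_P S$ makes position $r+1$ a $P$‑descent. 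Thus $N_r(\oo)$ counts $(r{+}1)$‑subsets of the $k$ sinks containing $j_k$, which is $\binom{k-1}{r}$.

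The main obstacle is exactly this counting lemma, and within it the last structural assertion — that a newly exposed sink of $\oo\setminus S$ can never lie $P$‑below $j_k$; once that is established the rest is a routine assembly of results already proved. One should also verify the harmless book‑keeping that all displayed rearrangements are valid modulo $\II_P$ because $\gamma_{(1^n)}\in\II_{P,q}^\perp$, and that the formal series $\sum_j(z-1)^j\ee_j$ and $\sum_i\hh_i$ may be manipulated in the degree completion of $\UU/\II_P$ since only the degree‑$n$ part survives the pairing.
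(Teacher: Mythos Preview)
Your proof is correct and follows essentially the same route as the paper. The paper does not prove this statement where it is stated (it is cited to \cite{SW2016,Stanley1995}), but it re-derives it as the $\mu=(1^n)$ case of Theorem~\ref{thm:sum_m=sink_heap}: there too one transfers an identity expressing $\sum_{\ell(\lambda)=k}m_\lambda$ in terms of the products $e_jh_{d-j}$ to $\UU/\II_P$ and then counts, for each acyclic orientation, the words of $W(\oo)$ whose $P$-descent set is an initial segment.

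The packaging differs only slightly. The paper works at a fixed $k$ via Lemma~\ref{lem:sum_m=sum_eh} and proves the coarser count $N_{j-1}(\oo)+N_j(\oo)=\binom{s(\oo)}{j}$ (simply choose any $j$ of the $s(\oo)$ sinks to be the first $j$ letters), then finishes with the binomial identity $\sum_{j}(-1)^{j-k}\binom{\ell}{j}\binom{j}{k}=\delta_{\ell,k}$. You instead use the equivalent generating-function form $\sum_\lambda z^{\ell(\lambda)}m_\lambda=\bigl(\sum_j(z-1)^je_j\bigr)\bigl(\sum_ih_i\bigr)$ and prove the sharper $N_r(\oo)=\binom{k-1}{r}$ directly, summing to $z\cdot z^{k-1}$. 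Your ``newly exposed sink cannot lie $P$-below $j_k$'' argument is correct (if $v$ is incomparable to some $s\in S$ and $s\ge_P j_k$, then $v<_P j_k$ would force $v<_P s$); the paper's cruder count sidesteps the need for this observation by absorbing the question of whether position $r+1$ is a $P$-descent into the inclusion--exclusion.
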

Although Stanley proved this theorem for arbitrary graphs, the chromatic
quasisymmetric functions are in general not symmetric functions, so Shareshian
and Wachs showed this for natural unit interval orders.

Theorem~\ref{thm:c_lambda=acyclic} is an evidence for the conjectures of Stanley--Stembridge and Shareshian--Wachs which say that $c_\lambda(q)$ is a polynomial with nonnegative coefficients, i.e., $c_\lambda(q)\in\mathbb{N}[q]$.
Furthermore, this theorem suggests that $c_\lambda(q)$ can be stated as a sum of certain acyclic orientations of $P$ with some conditions.
For $X_P(\xx,q;\mu)$, we have a similar description of $\sum_{\ell(\lambda)=k} c_\lambda(q)$ in terms of certain heaps. To show this, we need the following lemma which is an easy exercise.
\begin{lem} \label{lem:sum_m=sum_eh}
We have
\[
\sum_{\substack{\lambda\vdash d \\ \ell(\lambda)=k}} m_\lambda(\xx) = \sum_{j=k}^{d} (-1)^{j-k} \binom{j}{k} e_j(\xx) h_{d-j}(\xx).
\]
\end{lem}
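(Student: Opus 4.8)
The plan is to prove the identity $\sum_{\ell(\lambda)=k} m_\lambda(\xx) = \sum_{j=k}^{d} (-1)^{j-k}\binom{j}{k} e_j(\xx) h_{d-j}(\xx)$ by working with generating functions in an auxiliary variable and extracting the relevant graded piece. First I would introduce a formal variable $t$ and consider the specialization that records the number of parts: concretely, I would look at the generating function $\sum_{\lambda} m_\lambda(\xx) t^{\ell(\lambda)}$ in degree $d$, or equivalently track parts via the substitution of an extra variable. The cleanest route is to observe that $\sum_{\lambda} m_\lambda(\xx) t^{\ell(\lambda)}$ is the coefficient-extraction object attached to $\prod_{i\ge 1}\left(1 + t\sum_{r\ge 1} x_i^r\right)$; indeed, choosing a part of size $r$ on the variable $x_i$ contributes $t x_i^r$, and distinct choices of exponents on distinct variables, read in weakly decreasing order, biject with monomial symmetric functions weighted by $t^{\ell}$.

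The key computation is then to rewrite $\prod_{i\ge 1}\left(1 + t\,\frac{x_i}{1-x_i}\right) = \prod_{i\ge 1} \frac{1 - x_i + t x_i}{1 - x_i} = \prod_{i\ge 1}\frac{1-(1-t)x_i}{1-x_i}$. Using the generating functions $E(s) = \prod_i(1+x_i s) = \sum_n e_n(\xx) s^n$ and $H(s) = \prod_i (1-x_i s)^{-1} = \sum_n h_n(\xx) s^n$ from the excerpt, this product equals $E(-(1-t))\,H(1) = E(t-1)\,H(1)$. Expanding, $\prod_{i}\frac{1-(1-t)x_i}{1-x_i} = \left(\sum_{j\ge 0} e_j(\xx)(t-1)^j\right)\left(\sum_{\ell\ge 0} h_\ell(\xx)\right)$, so the coefficient of $t^k$ in the degree-$d$ part is $\sum_{j\ge 0}\left[\text{coeff of }t^k\text{ in }(t-1)^j\right] e_j(\xx) h_{d-j}(\xx) = \sum_{j\ge k} \binom{j}{k}(-1)^{j-k} e_j(\xx) h_{d-j}(\xx)$, which is exactly the right-hand side. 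On the left, the coefficient of $t^k$ in degree $d$ is $\sum_{\lambda\vdash d,\ \ell(\lambda)=k} m_\lambda(\xx)$.

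The one point that needs a little care — and the place I would slow down — is justifying the bijection underlying $\sum_\lambda m_\lambda(\xx) t^{\ell(\lambda)} = \prod_i\left(1 + t\frac{x_i}{1-x_i}\right)$ as an identity of formal power series in $\mathbb{Q}[[\xx]][t]$ (or better, in the completed ring where we allow infinitely many variables): each monomial $x_{i_1}^{a_1}\cdots x_{i_\ell}^{a_\ell}$ with distinct $i_1<\dots<i_\ell$ and $a_j\ge 1$ appears exactly once on each side, with weight $t^\ell$, and summing over all rearrangements of a fixed partition $\lambda$ of the exponent multiset recovers $m_\lambda(\xx) t^{\ell(\lambda)}$. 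Since only finitely many of the factors $1 + t\frac{x_i}{1-x_i}$ can contribute to any fixed monomial, the infinite product is well defined and the expansion is legitimate. Once this is in place, the rest is the elementary algebraic manipulation above, and extracting the coefficient of $t^k$ in the homogeneous degree-$d$ component finishes the proof.
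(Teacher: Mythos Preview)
Your proof is correct. The paper omits a proof entirely, calling the lemma ``an easy exercise,'' so there is nothing to compare against; your generating-function approach via
\[
\sum_{\lambda} m_\lambda(\xx)\, t^{\ell(\lambda)} \;=\; \prod_{i\ge 1}\Bigl(1 + t\,\frac{x_i}{1-x_i}\Bigr) \;=\; E(t-1)\,H(1)
\]
is a clean and standard route, and your justification of the infinite product and the coefficient extraction is adequate.
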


\begin{thm} \label{thm:sum_m=sink_heap}
Let $P$ be a natural unit interval order. Then for \( d, k\ge 1 \), we have
\begin{equation}
\sum_{\substack{\lambda\vdash d \\ \ell(\lambda)=k}} \mono_\lambda(\uu) \equiv \sum_H u_{\ww_H} \mod \II_P \nonumber,
\end{equation}
where $H$ ranges over all heaps of $P$ consisting of $d$ blocks with $k$ sinks.
Consequently, let $X_P(\xx,q;\mu) = \sum_\lambda c_\lambda(q) e_\lambda(\xx)$.
Then we have
\[
  \sum_{\ell(\lambda)=k} c_\lambda(q) = \sum_H q^{\asc_P(H)},
\]
where $H$ ranges over all heaps of type $\mu$ with $k$ sinks.
\end{thm}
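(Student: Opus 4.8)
The plan is to mimic the strategy that was already used for Theorem~\ref{thm:p-positive} and Theorem~\ref{thm:s-positive}: first establish the $\uu$-monomial identity for $\sum_{\ell(\lambda)=k}\mono_\lambda(\uu)$ modulo $\II_P$, and then invoke Corollary~\ref{cor:strategy} together with \eqref{eq:Omega=hm} to read off the statement about the $e$-coefficients of $X_P(\xx,q;\mu)$. For the second half, once the first congruence is known, we pair both sides with $\gamma_\mu$: by Corollary~\ref{cor:strategy} applied to the expansion \eqref{eq:Omega=hm}, the coefficient of $h_\lambda(\xx)$ in $\omega X_P(\xx,q;\mu)$ is $\langle\mono_\lambda(\uu),\gamma_\mu\rangle$, hence $\sum_{\ell(\lambda)=k}c_\lambda(q)=\sum_{\ell(\lambda)=k}\langle\mono_\lambda(\uu),\gamma_\mu\rangle = \langle\sum_{\ell(\lambda)=k}\mono_\lambda(\uu),\gamma_\mu\rangle$; using the first congruence this equals $\langle\sum_H u_{\ww_H},\gamma_\mu\rangle$, and since $\ww_H$ is the unique $P$-descent-free word in $W(H)$ (Proposition~\ref{prop:unique_word_and_hh}) and $\gamma_\mu=\sum_{\ww\in W(\mu)}q^{\inv_P(\ww)}\ww$, this pairing picks out exactly the heaps $H$ of type $\mu$ with $k$ sinks, weighted by $q^{\inv_P(\ww_H)}=q^{\asc_P(H)}$.

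For the first congruence I would argue as follows. By the definition \eqref{eq:def_mono} of $\mono_\lambda(\uu)$ and Lemma~\ref{lem:sum_m=sum_eh}, the transition identity $\sum_{\ell(\lambda)=k}m_\lambda(\xx)=\sum_{j\ge k}(-1)^{j-k}\binom{j}{k}e_j(\xx)h_{d-j}(\xx)$ holds on the commutative side; since $\mono_\lambda(\uu)$ is obtained from $m_\lambda(\xx)$ by the substitution $e_\mu\mapsto\ee_\mu(\uu)$, which respects this linear relation (it is literally the same linear combination of the $\ee_\mu(\uu)$), we get the operator identity
\[
\sum_{\substack{\lambda\vdash d\\ \ell(\lambda)=k}}\mono_\lambda(\uu) = \sum_{j=k}^{d}(-1)^{j-k}\binom{j}{k}\,\ee_j(\uu)\,\hh_{d-j}(\uu)
\]
in $\UU$ (one must check, using Theorem~\ref{thm:e_k_commute}, that the products are unambiguous modulo $\II_P$, and that the commutative-to-noncommutative substitution is well-defined exactly as in the proof of Proposition~\ref{prop:Omega=pp}). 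Now expand the right-hand side using \eqref{eq:def_ee} and \eqref{eq:hh}: a term $u_\ww$ with $\ww=\ww_1\cdots\ww_d$ arises from $\ee_j(\uu)\hh_{d-j}(\uu)$ precisely when $\ww_1>_P\cdots>_P\ww_j$ and $\ww_{j+1}\ngtr_P\cdots\ngtr_P\ww_d$. Fix a word $\ww$ and let $r=r(\ww)$ be the largest index such that $\ww_1>_P\cdots>_P\ww_r$ (so $\ww$ has $P$-descent set $\{1,\dots,r-1\}$ when it contributes, i.e., when $\ww$ has no $P$-descent after position $r$). For such a $\ww$, the coefficient of $u_\ww$ is $\sum_{j=k}^{r}(-1)^{j-k}\binom{j}{k}$, which telescopes to $(-1)^{r-k}\binom{r-1}{k-1}$; in particular it is $0$ unless $r=k$, in which case it is $1$. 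Words with a $P$-descent at some position $\ge r+1$ do not appear at all. Hence modulo $\II_P$ we are left with exactly the $u_\ww$ for words $\ww$ having no $P$-descent and satisfying $r(\ww)=k$, i.e., with no $P$-descent and exactly $k$ left-to-right $P$-maxima. By the dictionary between $P$-descent-free words and heaps with a prescribed number of sinks (Proposition~\ref{prop:unique_word_and_hh}, together with the remark preceding Corollary~\ref{cor:pp=unique_sink} that left-to-right $P$-maxima correspond to sinks), these are precisely the words $\ww_H$ for heaps $H$ with $d$ blocks and $k$ sinks, each occurring once. This yields the claimed congruence.

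The main obstacle I expect is the bookkeeping in the combinatorial sum over $j$: one must be careful that a word $\ww$ with $P$-descent set exactly $\{1,\dots,r-1\}$ is counted by $\ee_j(\uu)\hh_{d-j}(\uu)$ for \emph{every} $j$ in the range $k\le j\le r$ (split the strictly $P$-decreasing prefix after position $j$), and by no other $j$, so that the coefficient is the alternating binomial sum $\sum_{j=k}^r(-1)^{j-k}\binom{j}{k}$; verifying this telescopes correctly (it equals $(-1)^{r-k}\binom{r-1}{k-1}$, which vanishes for $r>k$) is the crux. A secondary, more routine point is to confirm that the substitution $e_\mu(\xx)\mapsto\ee_\mu(\uu)$ transporting Lemma~\ref{lem:sum_m=sum_eh} to the noncommutative setting is legitimate modulo $\II_P$ — this is exactly the device used in the proof of Proposition~\ref{prop:Omega=pp}, so it can be cited rather than reproved. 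Everything else is then a direct application of Corollary~\ref{cor:strategy} and Proposition~\ref{prop:Omega=hm}.
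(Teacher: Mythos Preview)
Your overall strategy (transport Lemma~\ref{lem:sum_m=sum_eh} to the noncommutative side and then apply Corollary~\ref{cor:strategy}) is the right one and matches the paper. The flaw is in the word-by-word bookkeeping, and it is not a minor slip: the cancellation you want simply does not occur at the level of individual words.

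Concretely, fix a word $\ww$ with $P$-descent set exactly $\{1,\dots,r-1\}$. You assert that $u_\ww$ appears in $\ee_j(\uu)\hh_{d-j}(\uu)$ for every $j$ with $k\le j\le r$, by ``splitting the strictly $P$-decreasing prefix after position $j$''. But if $j<r-1$ then $\ww_{j+1}>_P\ww_{j+2}$, so the suffix $\ww_{j+1}\cdots\ww_d$ has a $P$-descent and does \emph{not} appear in $\hh_{d-j}(\uu)$. In fact $u_\ww$ occurs only for $j\in\{r-1,r\}$ (this is exactly the observation used in the proof of Theorem~\ref{thm:p-positive}). The resulting coefficient is $(-1)^{r-k}\binom{r}{k}+(-1)^{r-1-k}\binom{r-1}{k}=(-1)^{r-k}\binom{r-1}{k-1}$, and this is \emph{not} zero for $r>k$ (e.g.\ $r=2$, $k=1$ gives $-1$). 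So the alternating sum does not collapse to the words with $r=k$. A tiny example already shows this: for $P$ the chain $1<_P 2$, $d=2$, $k=1$, one gets $\ee_1\hh_1-2\ee_2=u_1^2+u_1u_2-u_2u_1+u_2^2$, where $u_2u_1$ (with $r=2$) survives with coefficient $-1$.

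The paper's proof avoids this by working modulo $\II_P$ from the start and grouping by heaps rather than by words. For a heap $H$ with $\ell$ sinks, the number of words $\ww\in W(H)$ of the required shape for $\ee_j\hh_{d-j}$ is $\binom{\ell}{j}$ (choose which $j$ sinks are peeled off first), so $\ee_j(\uu)\hh_{d-j}(\uu)\equiv\sum_H\binom{s(H)}{j}u_{\ww_H}$ modulo $\II_P$. Plugging this into the alternating sum and using the standard identity $\sum_{j=k}^{\ell}(-1)^{j-k}\binom{\ell}{j}\binom{j}{k}=\delta_{\ell,k}$ is what makes everything collapse to heaps with exactly $k$ sinks. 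In short, the cancellation is a heap-level phenomenon, not a word-level one; your argument can be repaired by inserting this heap-counting step in place of the per-word computation.
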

\begin{proof}
From Lemma~\ref{lem:sum_m=sum_eh}, we have
\begin{equation} \label{eq:sum_m=sum_eh_noncomm}
\sum_{\substack{\lambda\vdash d \\ \ell(\lambda)=k}} \mono_\lambda(\uu) \equiv \sum_{j=k}^{d} (-1)^{j-k} \binom{j}{k} \ee_j(\uu) \hh_{d-j}(\uu) \mod \II_P.
\end{equation}
Let us write $\ee_j(\uu) \hh_{d-j}(\uu)$ as a sum of monomials:
\[
\ee_j(\uu) \hh_{d-j}(\uu) = \sum_{\substack{\ww_1>_P\dots >_P \ww_j \\ \ww_{j+1}\ngtr_P\dots \ngtr_P \ww_d}} u_\ww.
\]
Proposition~\ref{prop:algebraic_expression_local_flip} says that if $\ww$ and $\ww'$ correspond to the same heap $H$, i.e., $\ww,\ww' \in W(H)$, then $u_\ww \equiv u_{\ww'} \mod \II_P$.
Let $\ww$ be a word satisfying the condition in the sum on the right hand side. Then the heap corresponding to $\ww$ has at least $j$ sinks.
Conversely, let $H$ be a heap with $\ell$ sinks. Then there are $\binom{\ell}{j}$ words in $W(H)$ satisfying the condition. Indeed, first choose $j$ sinks, and write them in $P$-decreasing order. Delete the chosen sinks from $H$, denoting the resulting heap by $H'$, and then by Proposition~\ref{prop:unique_word_and_hh} there is only one word $\ww_{H'}$ corresponding to $H'$ with no $P$-descents, so let $\ww_{j+1}\cdots\ww_d = \ww_{H'}$.
Then by Proposition~\ref{prop:algebraic_expression_local_flip}, we can rewrite $\ee_j(\uu) \hh_{d-j}(\uu)$ as
\[
\ee_j(\uu) \hh_{d-j}(\uu) \equiv \sum_{s(H)\ge j} \binom{s(H)}{j} u_{\ww_H} \mod \II_P,
\]
where $s(H)$ is the number of sinks of $H$. Combining with \eqref{eq:sum_m=sum_eh_noncomm} yields
\begin{align*}
\sum_{\substack{\lambda\vdash d \\ \ell(\lambda)=k}} \mono_\lambda(\uu) &\equiv \sum_{j=k}^{d} (-1)^{j-k} \binom{j}{k} \sum_{s(H)\ge j} \binom{s(H)}{j} u_{\ww_H}  \mod \II_P \\
&\equiv \sum_{s(H)\ge k} \sum_{j=k}^{s(H)} (-1)^{j-k} \binom{s(H)}{j} \binom{j}{k} u_{\ww_H} \mod \II_P \\
&\equiv \sum_{s(H)= k} u_{\ww_H} \mod \II_P,
\end{align*}
since $\sum_{j=k}^{\ell} (-1)^{j-k} \binom{\ell}{j} \binom{j}{k} = \delta_{\ell,k}$.
\end{proof}
We remark that the case \( k=1 \) in the theorem is compatible with
Corollary~\ref{cor:pp=unique_sink} since \( p_k(\xx)=m_k(\xx) \) and thus
\( \pp_k(\uu) \equiv \mono_k(\uu) \) modulo \( \II_P \).

According to Theorem~\ref{thm:sum_m=sink_heap}, one can try to describe
the coefficient $c_\lambda(q)$ as a sum of certain heaps with $\ell(\lambda)$ sinks
and some extra conditions. In this sense, we provide a combinatorial description of
$c_\lambda(q)$ where $\lambda$ is of two-column shape or of hook shape.

\subsubsection{Indexed by two-column shapes}
We first check the following equation for the monomial symmetric functions indexed
by partitions of two-column shape: for $k\ge \ell\ge 0$,
\[
e_{(k,\ell)}(\xx) = m_{(2^\ell, 1^{k-\ell})}(\xx) + \sum_{i=0}^{\ell-1} \binom{k+\ell-2i}{\ell-i} m_{(2^i,1^{k+\ell-2i})}(\xx).
\]
This equation follows immediately from Theorem~\ref{thm:e=Mm_ordianry}.
We thus obtain a noncommutative $P$-analogue of this equation module \( \II_P \):
\begin{equation} \label{eq:ee_kl=mono_two_column}
\ee_{(k,\ell)}(\uu) \equiv \mono_{(2^\ell, 1^{k-\ell})}(\uu) + \sum_{i=0}^{\ell-1} \binom{k+\ell-2i}{\ell-i} \mono_{(2^i,1^{k+\ell-2i})}(\uu) \mod \II_P.
\end{equation}
We will find conditions for heaps which contribute to $\mono_\lambda(\uu)$ where $\lambda$ is of two-column shape using this congruence.

Given a heap $H$, the \emph{rank} of a block $p$ is the height of $p$ in the diagram of $H$, denoted by $\rank(p)$. In other words, $\rank(p)$ is one more than the length of a longest path from $p$ to some sink in $H$. In particular, $\rank(p)=1$ if and only if $p$ is a sink. Also we define the \emph{rank} of $H$ by the maximum rank of blocks.

The following argument is similar to the proof of Theorem~\ref{thm:e_k_commute}.
Consider $\ee_{(k,\ell)}(\uu)$ for $k\ge \ell\ge 0$. Let
\[
  \EE_{k,\ell} = \{ \ww_1\cdots\ww_{k+\ell} \mid \ww_1>_P\dots>_P\ww_k \mbox{ and } \ww_{k+1}>_P\dots>_P\ww_{k+\ell} \}.
\]
Then, by definition,
\[
\ee_{(k,\ell)}(\uu) = \sum_{\ww\in\EE_{k,\ell}} u_\ww.
\]
For a word $\ww$, we denote by $H_\ww$ the heap corresponding to $\ww$.
Note that for each $\ww\in\EE_{k,\ell}$, $H_\ww$ is of rank at most $2$.
We can classify connected heaps of rank at most $2$ as follows.
For a connected heap $H$ at most rank $2$, let \( n_1 \) and \( n_2 \) be the
numbers of blocks of rank 1 and 2, respectively.
Note that \( |n_1-n_2| \le 1 \). We say that the heap \( H \) is
\begin{enumerate}[label=(\roman*)]
\item \emph{of type S} if \( n_1=1 \) and \( n_2=0 \) (that is, the rank of \( H \) is 1);
\item \emph{of type N} if \( n_1 = n_2 \);
\item \emph{of type M} if \( n_1 = n_2+1\ge 2 \);
\item \emph{of type W} if \( n_1 = n_2-1 \).
\end{enumerate}
An example of each type is depicted in Figure~\ref{fig:type_SNMW}.
\begin{figure}
\centering
\begin{tikzpicture}[scale=.4]
  \draw (0,0) rectangle ++(\RECTX,\RECTY);
  \draw (3.5,0) rectangle ++(\RECTX,\RECTY);
  \draw (5,1) rectangle ++(\RECTX,\RECTY);
  \draw (6.5,0) rectangle ++(\RECTX,\RECTY);
  \draw (8,1) rectangle ++(\RECTX,\RECTY);
  \draw (11.5,0) rectangle ++(\RECTX,\RECTY);
  \draw (13,1) rectangle ++(\RECTX,\RECTY);
  \draw (14.5,0) rectangle ++(\RECTX,\RECTY);
  \draw (16,1) rectangle ++(\RECTX,\RECTY);
  \draw (17.5,0) rectangle ++(\RECTX,\RECTY);
  \draw (21,1) rectangle ++(\RECTX,\RECTY);
  \draw (22.5,0) rectangle ++(\RECTX,\RECTY);
  \draw (24,1) rectangle ++(\RECTX,\RECTY);
  \draw (25.5,0) rectangle ++(\RECTX,\RECTY);
  \draw (27,1) rectangle ++(\RECTX,\RECTY);
  \draw [thick] (-1,0) -- (30.5,0);
\end{tikzpicture}
\caption{From left to right, each connected component is of type S, N, M, and W, respectively.} \label{fig:type_SNMW}
\end{figure}
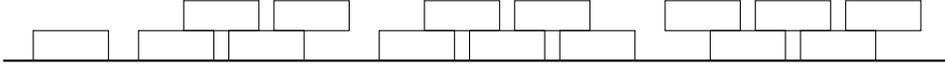

One can easily check that a connected component of type M can be flipped
to that of type W via a series of local flips, and vice versa.

\begin{thm} \label{thm:m_two_column}
For $k\ge \ell\ge0$, we have
\begin{equation} \label{eq:mono_two-column}
\mono_{(2^\ell,1^{k-\ell})}(\uu) \equiv \sum_H u_{\ww_H} \mod \II_P,
\end{equation}
where $H$ ranges over all heaps of $P$ such that $H$ consists of $k$ blocks of rank 1 and $\ell$ blocks of rank 2, and has no connected component of type W. Consequently, let $X_P(\xx,q;\mu) = \sum_\lambda c_\lambda(q) e_\lambda(\xx)$, then
\[
c_{(2^\ell,1^{k-\ell})}(q) = \sum_H q^{\asc_P(H)},
\]
where $H$ ranges over such heaps of type $\mu$.
\end{thm}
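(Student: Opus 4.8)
The plan is to extract $\mono_{(2^\ell,1^{k-\ell})}(\uu)$ from the congruence \eqref{eq:ee_kl=mono_two_column} by induction on $\ell$, once we have an explicit monomial expression for $\ee_{(k,\ell)}(\uu)$ modulo $\II_P$. The base case $\ell=0$ is the identity $\mono_{(1^k)}(\uu)=\ee_k(\uu)$, whose monomial expansion \eqref{eq:def_ee} is precisely $\sum_H u_{\ww_H}$ over heaps $H$ consisting of $k$ mutually $P$-comparable sinks, i.e.\ heaps with $k$ blocks of rank $1$, $0$ blocks of rank $2$, and (trivially) no component of type W.

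For the inductive step I first compute $\ee_{(k,\ell)}(\uu)=\ee_k(\uu)\ee_\ell(\uu)=\sum_{\ww\in\EE_{k,\ell}}u_\ww$ modulo $\II_P$, using the same two-row picture as in the proof of Theorem~\ref{thm:e_k_commute}: the heap $H_\ww$ of any $\ww\in\EE_{k,\ell}$ has rank at most $2$, with the first group of blocks at rank $1$ and the second group at rank $1$ or $2$. I claim that for a rank-$\le2$ heap $H$ with $r$ blocks of rank $1$, $s$ blocks of rank $2$ and exactly $c$ singleton (type S) components, the words $\ww\in\EE_{k,\ell}$ with $H_\ww=H$ are obtained by choosing a $k$-subset of the rank-$1$ blocks that contains every rank-$1$ block lying in a non-singleton component, writing it in $P$-decreasing order, and appending the remaining blocks in $P$-decreasing order. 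Indeed, all rank-$1$ blocks are pairwise $P$-comparable and so are all rank-$2$ blocks (otherwise two of them would be heap-comparable, contradicting equal rank); a rank-$1$ block covered by some rank-$2$ block cannot lie in the second group, since it would be $P$-incomparable to that rank-$2$ block, which also lies there; and conversely every admissible choice yields a valid linear extension realizing $H$. Hence the number of such words is $\binom{c}{\,k-r+c\,}$.

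Next I group the words of $\EE_{k,\ell}$ by the flip class of their heap via Proposition~\ref{prop:algebraic_expression_local_flip}, invoking the structural description of local flips on rank-$\le2$ heaps: a sequence of local flips can convert a component of type M into one of type W and back while fixing singleton and type-N components, so each flip class has a unique rank-$\le2$ representative $H_0$ with no component of type W, and the rank-$\le2$ heaps in that class are exactly the $2^{t}$ heaps obtained from $H_0$ by converting some subset of its $t$ type-M components to type-W form. Converting $w$ of them changes the rank counts from $(r_0,s_0)$ to $(r_0-w,s_0+w)$ and fixes the number $c$ of singleton components; moreover $r_0-s_0=t+c$, since type-M and singleton components each contribute $1$ to this difference and type-N components contribute $0$. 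Summing the word count over these heaps and using Vandermonde's identity together with $r_0+s_0=k+\ell$ gives
\[
\sum_{w\ge 0}\binom{t}{w}\binom{c}{\,k-(r_0-w)+c\,}=\sum_{w\ge 0}\binom{t}{w}\binom{c}{\,(r_0-k)-w\,}=\binom{t+c}{\,r_0-k\,}=\binom{r_0-s_0}{\,\ell-s_0\,},
\]
so that, writing $r(H_0),s(H_0)$ for the numbers of blocks of rank $1$ and $2$,
\[
\ee_{(k,\ell)}(\uu)\equiv\sum_{H_0}\binom{r(H_0)-s(H_0)}{\ell-s(H_0)}\,u_{\ww_{H_0}}\mod\II_P,
\]
the sum over all rank-$\le2$ heaps $H_0$ of degree $k+\ell$ with no type-W component. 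Substituting this and the inductive hypothesis $\mono_{(2^{i},1^{k+\ell-2i})}(\uu)\equiv\sum_{H_0:\,s(H_0)=i}u_{\ww_{H_0}}$ for $0\le i<\ell$ into \eqref{eq:ee_kl=mono_two_column}: a heap $H_0$ with $s(H_0)=i<\ell$ has $r(H_0)=k+\ell-i$, hence coefficient $\binom{k+\ell-2i}{\ell-i}$ above, which cancels against the $\binom{k+\ell-2i}{\ell-i}\mono_{(2^i,1^{k+\ell-2i})}(\uu)$ term; what survives is the sum over heaps with $s(H_0)=\ell$ (so $r(H_0)=k$, and all blocks have rank $1$ or $2$), each with coefficient $\binom{k-\ell}{0}=1$. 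This is \eqref{eq:mono_two-column}. The ``consequently'' clause then follows from Corollary~\ref{cor:strategy} applied to $\Omega(\xx,\uu)\equiv\sum_\lambda h_\lambda(\xx)\mono_\lambda(\uu)$ (Proposition~\ref{prop:Omega=hm}): $c_{(2^\ell,1^{k-\ell})}(q)=\langle\mono_{(2^\ell,1^{k-\ell})}(\uu),\gamma_\mu\rangle$, and pairing $u_{\ww_H}$ with $\gamma_\mu=\sum_{\ww\in W(\mu)}q^{\inv_P(\ww)}\ww$ contributes $q^{\inv_P(\ww_H)}=q^{\asc_P(H)}$ exactly when $H$ has type $\mu$.

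The main obstacle is the structural claim about local flips on rank-$\le2$ heaps invoked above — that each flip class contains a unique rank-$\le2$ heap with no type-W component, and that the rank-$\le2$ heaps of a class are precisely those obtained by converting subsets of type-M components to type-W form. This must be proved from the definition of a flippable triple, with care, because a single local flip may raise the rank above $2$, so the M$\leftrightarrow$W conversion of a larger component necessarily proceeds through intermediate heaps of rank $3$. Everything else (the word count $\binom{c}{k-r+c}$, the Vandermonde summation, and the telescoping cancellation) is bookkeeping.
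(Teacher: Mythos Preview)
Your proof is correct and follows essentially the same route as the paper's: induction on $\ell$, the word-count $\binom{c}{\ell-s}$ for each rank-$\le 2$ heap, grouping by the W-free representative, and the Vandermonde identity to obtain the multiplicity $\binom{k+\ell-2s}{\ell-s}$, which then matches~\eqref{eq:ee_kl=mono_two_column} term by term. The only difference is packaging: the paper introduces an explicit normalizing map $\WM$ (flip every W-component to M-form) and groups words by the $\WM$-image of their heap, rather than by flip class. This sidesteps the stronger structural claim you flag as the main obstacle: one only needs that each W-component can be converted to M-form by local flips (stated just before the theorem), together with the tautology that the $\WM$-preimage of a W-free heap $H_0$ consists of the $2^{n_M}$ heaps obtained by converting subsets of its M-components to W-form. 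Your stronger assertion --- that these are exactly the rank-$\le 2$ heaps in the flip class of $H_0$ --- is true (flips act within components, odd-path components have only the M- and W-forms at rank $\le 2$, and the two N-forms of an even-path component lie in distinct flip classes), but it is not needed for the argument.
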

\begin{proof}
We use induction on $\ell$, the base case $\ell=0$ being trivial.

Since two distinct words $\ww,\ww'\in\EE_{k,\ell}$ may correspond to the same heap,
we first enumerate words in $\EE_{k,\ell}$ corresponding to $H_\ww$
for a given word $\ww\in\EE_{k,\ell}$.
Let $n_1$ and $n_2$ be the number of blocks of $H_\ww$ of rank 1 and rank 2,
respectively. For $1\le i\le k$, each block corresponding to $\ww_i$ is of rank 1,
and every block of rank 2 corresponds to $\ww_j$ for some $k+1 \le j\le k+\ell$.
Then $n_1\ge k$ and $n_2\le \ell$.
Furthermore, for $k+1 \le j\le k+\ell$, if a block corresponding to $\ww_j$ is of
rank 1, then it forms a connected component of type S.
Let $n_S$ be the number of connected components of $H_\ww$ of type S.
Therefore we have
\begin{equation} \label{eq:numbers_words=H_w}
  \#\{\ww'\in\EE_{k,\ell} \mid H_{\ww'} = H_\ww \} = \binom{n_S}{\ell-n_2}.
\end{equation}

For a word $\ww\in\EE_{k,\ell}$, let $\WM(H_\ww)$ be the heap obtained from $H_\ww$
by flipping all connected components of type W to ones of type M via a series of
local flips.
By Proposition~\ref{prop:algebraic_expression_local_flip}, $u_\ww \equiv u_{\ww_{\WM(H_\ww)}} \mod \II_P$.
Define the multiset $\WM(\EE_{k,\ell})$ by
\[
\WM(\EE_{k,\ell}) = \{\WM(H_\ww) \mid \ww\in\EE_{k,\ell} \}.
\]
Then we can write
\begin{equation} \label{eq:e=sum_WM}
\ee_{(k,\ell)}(\uu) \equiv \sum_{H\in\WM(\EE_{k,\ell})} u_{\ww_H} \mod \II_P.
\end{equation}
For $H\in\WM(\EE_{k,\ell})$, let us now find the multiplicity of $H$.
Again let $n_2$ be the number of blocks of $H$ of rank 2, and $n_S, n_M$ be
the number of connected components of $H$ of type S and type M, respectively.
Since $H$ has no connected component of type $W$, one can quickly check that
$n_M + n_S = k+\ell-2 n_2$.
In addition, we need the following observation.
Choose $j$ connected components of type M,
and flip them to ones of type W. Then the resulting heap $H'$ has $n_2+j$
blocks of rank 2, and $n_S$ connected components of type S. Of course,
$\WM(H') = H$. Hence by this observation and \eqref{eq:numbers_words=H_w},
we obtain that
\begin{align*}
\mbox{the multiplicity of $H$ in $\WM(\EE_{k,\ell})$} &= \sum_{j=0}^{n_M} \binom{n_M}{j} \binom{n_S}{\ell-n_2-j} \\
&= \binom{n_M + n_S}{\ell-n_2} \\
&= \binom{k+\ell-2n_2}{\ell-n_2}.
\end{align*}
Apply this to \eqref{eq:e=sum_WM}, and hence by induction on $\ell$, comparing with
\eqref{eq:ee_kl=mono_two_column} completes the proof.
\end{proof}

\begin{exam}
Let $P=P(2,3,4,5,5)$ and $\mu=(1^5)$. See Figure~\ref{fig:heap_orbit_2345}.
There are two heaps of type $\mu$ of rank 2; they are connected, and one is of
type M while the other is of type W.
Then by Theorem~\ref{thm:m_two_column}, we have $c_{2,2,1}(q) = q^2$ since the
ascent number of the heap of type M is 2.
\end{exam}

From the interpretation of the coefficients \( c_\lambda(q) \) in
Theorem~\ref{thm:m_two_column}, we obtain somewhat more information for the
coefficients. To simplify, we consider the chromatic symmetric function
\( X_P(\xx,q) = X_P(\xx,q;1^n) \). If there is a triangle in the graph \( P \),
every heap of \( P \) has rank at least 3. Therefore, in this case, the theorem
implies that \( c_\lambda(q) = 0 \) for all partitions \( \lambda \) of 2-column
shape. Suppose that \( P \) has no triangles. Since \( P \) is the incomparability
graph of a natural unit interval order, \( P \) has to be a disjoint union of paths.
Let \( H \) be a heap satisfying the conditions in Theorem~\ref{thm:m_two_column}.
Then the rank condition and the type condition force that each connected component
of \( H \) consisting of odd blocks forms of type \( M \).
On the other hand, each connected component consisting of even blocks can be form
of the following two cases:
\begin{center} \vspace{0.15cm}
  \begin{tikzpicture}[scale=.5]
    \draw (0,1) rectangle ++(\RECTX,\RECTY);
    \draw (1.5,0) rectangle ++(\RECTX,\RECTY);
    \draw (3,1) rectangle ++(\RECTX,\RECTY);
    \draw (4.5,0) rectangle ++(\RECTX,\RECTY);
    \draw [thick] (-1,0) -- (8,0);
  \end{tikzpicture} \qquad
  \begin{tikzpicture}[scale=.5]
    \draw (0,0) rectangle ++(\RECTX,\RECTY);
    \draw (1.5,1) rectangle ++(\RECTX,\RECTY);
    \draw (3,0) rectangle ++(\RECTX,\RECTY);
    \draw (4.5,1) rectangle ++(\RECTX,\RECTY);
    \draw [thick] (-1,0) -- (8,0);
  \end{tikzpicture}
\end{center}
We summarize the above discussion.
\begin{cor} \label{cor:2_column_unimodal}
  Let \( P \) be the incomparability graph of a natural unit interval order
  with \( n \) vertices, and \( X_P(\xx,q) = \sum_\lambda c_\lambda(q)
  e_\lambda(\xx) \). Let \( n_e \) and \( n_o \) be the numbers of connected
  components of \( P \) consisting of even and odd vertices, respectively.
  Then for a partition \( \lambda \) of 2-column shape, we have
  \[
    c_\lambda(q) = 
    \begin{cases}
      q^{(n-2n_e-n_o)/2} (1+q)^{n_e} & \mbox{if \( P \) is triangle-free and \( \lambda'=((n+n_o)/2, (n-n_o)/2) \),} \\ 
      0 & \mbox{otherwise.}
    \end{cases}
  \]
  In particular, the coefficients \( c_\lambda(q) \) where \( \lambda \) are of
  2-column shape are unimodal.
\end{cor}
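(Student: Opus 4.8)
The plan is to unwind Theorem~\ref{thm:m_two_column} in the case $\mu=(1^n)$, where a heap of $P$ is simply an acyclic orientation of the graph $P$ and the rank of a block is its height in the associated diagram. So $c_\lambda(q)=\sum_H q^{\asc_P(H)}$, the sum over heaps with $k$ rank-$1$ blocks, $\ell$ rank-$2$ blocks, and no component of type W, where $\lambda'=(k,\ell)$.

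First I would dispose of the non-triangle-free case. If $P$ contains a triangle on vertices $x,y,z$, then any acyclic orientation of $P$ restricts to a total order on $\{x,y,z\}$, so the largest of the three in that order has rank at least $3$; hence every heap of $P$ has rank $\ge 3$, and Theorem~\ref{thm:m_two_column} forces $c_\lambda(q)=0$ for every two-column $\lambda$. From now on assume $P$ is triangle-free. Writing $P=P(m_1,\dots,m_n)$, triangle-freeness is equivalent to $m_i\le i+1$ for all $i$ (otherwise $\{i,i+1,i+2\}$ is a triangle), so $m_i\in\{i,i+1\}$ and the edge set of $P$ is exactly $\{\,\{i,i+1\}\mid m_i=i+1\,\}$; thus $P$ is a disjoint union of paths supported on blocks of consecutive integers. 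Let the component sizes be $\ell_1,\dots,\ell_r$, of which $n_o$ are odd and $n_e$ are even.

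Next I would classify the heaps contributing in Theorem~\ref{thm:m_two_column}. A heap of type $(1^n)$ has all ranks $\le 2$ iff it contains no directed path on three vertices, i.e.\ iff every block is a source or a sink with all edges oriented from sources (the rank-$2$ blocks) to sinks (the rank-$1$ blocks); equivalently it is one of the two proper $2$-colorings of $P$, with one color class placed ``on top''. On a path with $\ell=2t+1$ vertices the two such orientations are one of type W (with $t$ rank-$1$ and $t+1$ rank-$2$ blocks) and one not of type W (with $t+1$ rank-$1$ and $t$ rank-$2$ blocks), while on a path with $\ell=2t$ vertices both are of type N (with $t$ rank-$1$ and $t$ rank-$2$ blocks). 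Since the heaps counted have no component of type W, each odd component admits a unique admissible orientation and each even one admits exactly two; summing the local rank-$1$ counts shows every admissible heap has $k=(n+n_o)/2$ rank-$1$ blocks and $\ell=(n-n_o)/2$ rank-$2$ blocks. Hence $c_\lambda(q)=0$ unless $\lambda'=((n+n_o)/2,(n-n_o)/2)$, finishing the ``otherwise'' case.

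Finally, for $\lambda$ with $\lambda'=((n+n_o)/2,(n-n_o)/2)$, I would compute the ascent polynomial by factoring over components, using that $\asc_P$ is additive over connected components and that the admissible orientations factor as a product over components. On a path $\{a,a+1,\dots,a+\ell-1\}$ the edge $\{j,j+1\}$ is an ascent precisely when $j$ is a source, and a short case check gives: an odd component of size $2t+1$ contributes $q^{t}$ (its unique admissible orientation), while an even component of size $2t$ contributes $q^{t}+q^{t-1}=q^{t-1}(1+q)$ from its two type-N orientations. Taking the product over all components and collecting exponents yields $c_\lambda(q)=q^{(n-2n_e-n_o)/2}(1+q)^{n_e}$. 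Unimodality is then immediate, since $(1+q)^{n_e}$ has unimodal (binomial) coefficients, multiplication by a power of $q$ preserves unimodality, and the zero polynomial is trivially unimodal. The only genuine computation is the per-component ascent count together with the bookkeeping that reproduces the exponents $(n+n_o)/2$ and $(n-2n_e-n_o)/2$; everything else is a direct application of Theorem~\ref{thm:m_two_column} and the source/sink description of rank-$\le 2$ heaps.
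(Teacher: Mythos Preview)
Your proposal is correct and follows essentially the same approach as the paper: both invoke Theorem~\ref{thm:m_two_column}, observe that a triangle forces every heap to have rank at least $3$, reduce the triangle-free case to a disjoint union of paths, and then note that odd components are forced to a single non-W orientation while even components admit two type-N orientations. You fill in more explicit detail on the per-component ascent counts and the bookkeeping for the exponent $(n-2n_e-n_o)/2$, but the underlying argument is the same as the paper's discussion preceding the corollary.
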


\subsubsection{Indexed by hook shapes}
In this subsection, we deal with the coefficient \( c_\lambda(q) \) where
\( \lambda \) is a partition of hook shape. We first define some notions.
For $\{a_1<\dots<a_k\}\subset [n]$, we say that $\{a_1, \dots, a_k\}$
\emph{forms a $P$-path} if the subgraph of $\inc(P)$ induced by
$\{a_1,\dots,a_k\}$ is a path. In terms of the poset structure of $P$,
$\{a_1,\dots,a_k\}$ forms a $P$-path if and only if
$a_1 \nless_P a_2 \nless_P \dots \nless_P a_k$ and $a_i <_P a_j$ if $j-i>1$.

Now consider the following situation. Let $H_1$ be a heap with the unique sink $p_1$, and $(p_2,p_1,p_0)$ be a flippable triple in $H_1$ as
\begin{center} \vspace{0.15cm}
\includegraphics[scale=0.75]{./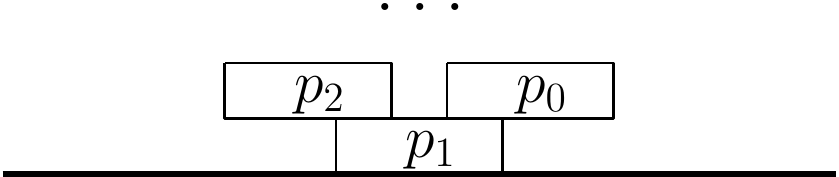}.
\end{center}
Let $H_2$ be the heap obtained from $H_1$ by a local flip at $(p_2,p_1,p_0)$.
Then $p_1$ is of rank 2 and $p_0,p_2$ are of rank 1 in $H_2$. Suppose that there
is a block $p_3$ in $H_2$ such that $p_3$ differs from $p_0$ and $(p_3,p_2,p_1)$
is flippable in $H_2$.  In this case, $p_3$ is necessarily of rank 2. Then let
$H_3$ be the heap obtained from $H_2$ by a local flip at $(p_3,p_2,p_1)$. $H_2$
and $H_3$ are depicted as follows:
\begin{center} \vspace{0.15cm}
\includegraphics[scale=0.7]{./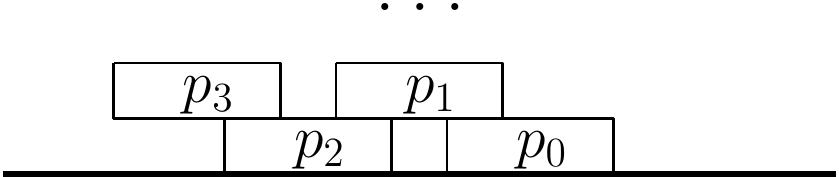} \quad
\includegraphics[scale=0.7]{./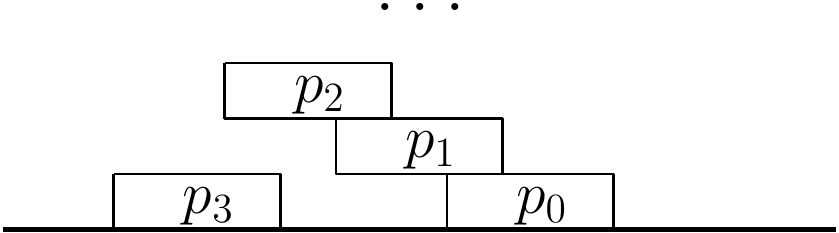}
\end{center}
Similarly, if there is a block $p_{i+1}$ in $H_i$ such that $p_{i+1}$ is different from $p_i$ and $(p_{i+1}, p_i, p_{i-1})$ is flippable, then $H_{i+1}$ is the heap obtained from $H_i$ by a local flip at $(p_{i+1}, p_i, p_{i-1})$. This process continues until there is no such $p_{i+1}$. Then we finally obtain the heap forming as follows:
\begin{center} \vspace{0.15cm}
\includegraphics[scale=0.75]{./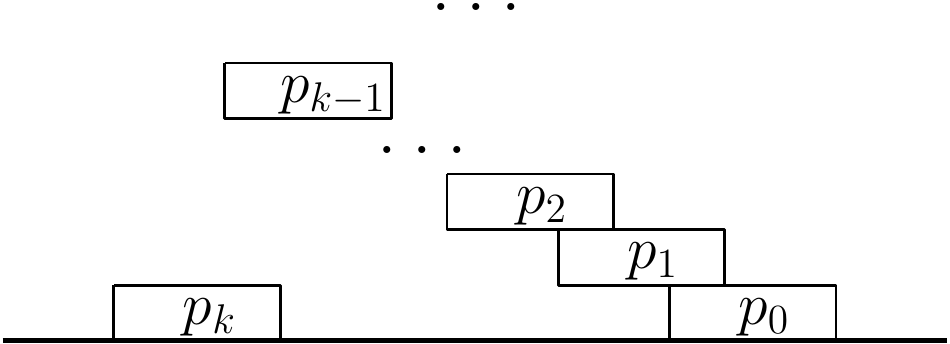}
\end{center}
By construction, $\{p_k, p_{k-1}, \dots, p_0\}$ forms a $P$-path. Also, deleting $p_k$ from this heap, the resulting heap has the unique sink $p_0$.
This process motivates the following definition. For a heap $H$, a set $\{v_{a_1, i_1}, \dots, v_{a_k,i_k}\}$ of blocks \emph{forms a forbidden $P$-path} if
\begin{enumerate}[label=(\roman*)]
\item $\{a_1,\dots,a_k\}$ forms a $P$-path,
\item $\rank(v_{a_1,i_1})=1$ and $\rank(v_{a_j,i_j})=k-j+1$ for $2\le j\le k$, and
\item $(v_{a_1,i_1}, v_{a_2,i_2}, v_{a_3,i_3})$ is flippable.
\end{enumerate}
Figure~\ref{fig:forbidden_path} illustrates an example and non-examples of a forbidden $P$-path.
\begin{figure}
\centering
\includegraphics[width=0.85\linewidth]{./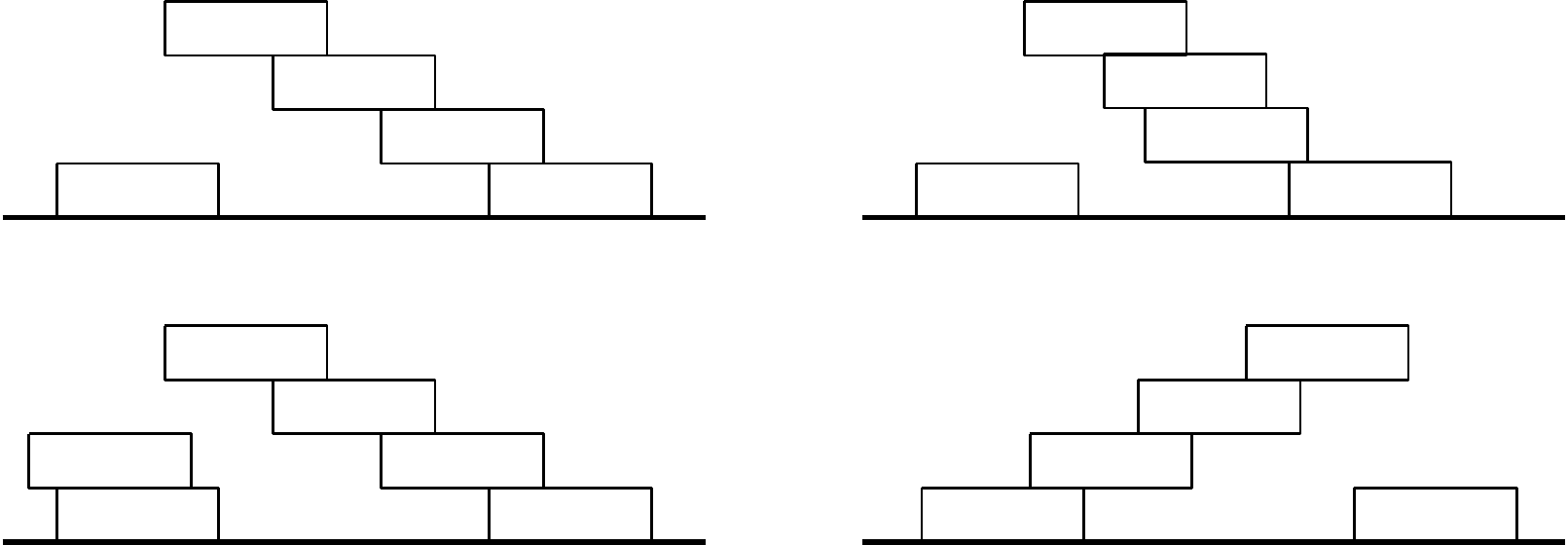}
\caption{
  The North-West heap forms a forbidden $P$-path.
  The others are non-examples of a forbidden $P$-path.
  The North-East one does not form a $P$-path. If we choose all blocks except
  the one of rank 3, we obtain a $P$-path, but they does not form a forbidden
  $P$-path because they violate the rank condition (ii). Similarly,
  the South-West diagram does not form a $P$-path, and if we choose all blocks
  except the one of rank 2 on left side, we have a $P$-path. But they violate
  the condition (iii), so is not a forbidden $P$-path. The last one is a $P$-path,
  but the blocks does not satisfy the rank condition (ii). Hence it does not
  form a forbidden $P$-path.} \label{fig:forbidden_path}
\end{figure}
The following lemma will be used in the proof of Theorem~\ref{thm:m_hook}.
\begin{lem} \label{lem:forbidden_path}
\begin{enumerate}[label=(\alph*),font=\upshape]
\item Every forbidden $P$-path can be transformed into a heap with a unique sink and two blocks of rank 2, via local flips.
\item Let $H$ be a heap of $P$ and $p$ a block in $H$. If $H$ contain a forbidden
$P$-path, and every forbidden $P$-path in $H$ contains $p$ as the rightmost element
in the diagram, then we can transform $H$ to a heap with no forbidden $P$-path via
local flips.
\end{enumerate}
\end{lem}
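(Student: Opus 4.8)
The plan is to make the shape of a forbidden $P$-path completely explicit and then obtain both parts by running, in reverse, the sequence of local flips described in the construction immediately preceding the lemma. Write a forbidden $P$-path as $\{b_1,\dots,b_k\}$ with $b_j$ on the $j$-th interval, so the underlying $P$-path is $a_1<\dots<a_k$. Since consecutive terms of a $P$-path are incomparable in $P$ while $a_i<_P a_j$ whenever $|i-j|\ge 2$, the only edges of $\inc(P)$ among $b_1,\dots,b_k$ are $b_1b_2,\dots,b_{k-1}b_k$, and comparing these with the rank condition~(ii) forces $b_2$ to cover both $b_1$ and $b_3$ and $b_i$ to cover $b_{i+1}$ for $3\le i\le k-1$. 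Thus $b_2$ is a peak of rank $k-1$, the blocks $b_2>b_3>\dots>b_k$ form a descending staircase, $b_1$ is a rank-$1$ block tucked under $b_2$ on the far left, and $(b_1,b_2,b_3)$ is the flippable triple at the peak---precisely the configuration produced by the preceding construction.

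For part~(a) I would perform the local flips at $(b_1,b_2,b_3)$, then $(b_2,b_3,b_4)$, \dots, then $(b_{k-2},b_{k-1},b_k)$, in this order; equivalently, one repeatedly flips at the peak triple of the current forbidden $P$-path. Each triple is flippable when its turn comes: just before the flip at $(b_j,b_{j+1},b_{j+2})$ the block $b_{j+1}$ covers $b_j$ (which by then has rank $1$) and $b_{j+2}$, and $b_j,b_{j+2}$ are non-adjacent because $a_j<_P a_{j+2}$. Tracking ranks by induction on $j$, after the flip at $(b_j,b_{j+1},b_{j+2})$ the block $b_{j+1}$ has become rank $1$, the blocks $b_1,\dots,b_j$ form an ascending staircase above it, and $b_{j+2},\dots,b_k$ is still a descending staircase; after the last flip the block $b_{k-1}$ is the unique sink and $b_{k-2},b_k$ are exactly the two blocks of rank $2$. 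Since each local flip is invertible, this is literally the reverse of the construction.

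For part~(b) take any forbidden $P$-path $F=\{b_1,\dots,b_k\}$ of $H$; by hypothesis its rightmost block $b_k$ equals $p$. Apply to $F$ the flips of part~(a), obtaining a heap $H'$. These flips reverse only the edges $b_1b_2,\dots,b_{k-1}b_k$, all of which are internal to $F$, so $H'$ and $H$ agree on every other edge; moreover $\rank(p)=2$ in $H'$, the block $b_{k-1}$ is the unique sink of $H'$ that was not already a sink of $H$, and $b_1,\dots,b_{k-2}$ now all have rank $\ge 2$. Suppose, for contradiction, that $H'$ contains a forbidden $P$-path $F'$ with rightmost block $p'$. Then $p'$ is a sink of $H'$, hence $p'\ne p$. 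If $p'=b_{k-1}$, then following $F'$ leftward from $b_{k-1}$ one is forced successively through $b_{k-2},b_{k-3},\dots$, these being the only blocks of the correct ranks lying on the correct side, so the leftmost block of $F'$ has rank $\ge 2$ in $H'$, contradicting~(ii). If instead $p'$ is a sink of $H$ distinct from $p$, then either $F'$ uses none of the reversed edges---so $F'$ is already a forbidden $P$-path of $H$ ending at $p'\ne p$, contradicting the hypothesis---or $F'$ uses a reversed edge $b_ib_{i+1}$, which puts $b_i$ into $F'$ at a position whose prescribed rank no longer matches the new rank of $b_i$, again a contradiction. Hence $H'$ has no forbidden $P$-path.

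The step I expect to be the main obstacle is this last one: ensuring that inverting a single forbidden $P$-path does not inadvertently create a new one. The rank conditions in the definition of a forbidden $P$-path are very rigid, and together with the standing hypothesis that all forbidden $P$-paths of $H$ share the rightmost block $p$ they leave essentially no room for a new one to appear; still, one must carefully trace any hypothetical $F'$ back through the handful of edges that changed and verify it must violate a rank condition or the hypothesis. Organizing this case analysis cleanly---for instance by phrasing part~(a) as ``shorten a forbidden $P$-path by one block via a single peak flip'' and checking that each such step replaces one forbidden $P$-path by a shorter one with the same rightmost block while disturbing nothing else---is where most of the work lies.
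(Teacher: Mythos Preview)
Your plan for part~(a) is exactly what the paper does: the paper's ``proof'' of (a) is a single diagram illustrating the reverse of the construction preceding the lemma, which is precisely the sequence of peak flips $(b_1,b_2,b_3),(b_2,b_3,b_4),\dots,(b_{k-2},b_{k-1},b_k)$ that you describe.

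For part~(b) your approach diverges from the paper's in one respect that matters. The paper does \emph{not} apply~(a) to an arbitrary forbidden $P$-path; it explicitly says to choose the \emph{shortest} forbidden $P$-path and apply~(a) to it. With that choice the verification that no new forbidden $P$-path is created collapses to a picture, because the shortest one sits at the ``bottom'' of any nested family of forbidden $P$-paths sharing the rightmost block~$p$, and unwinding it cannot leave behind a shorter one. Your version---pick any $F$ and then argue by cases that no $F'$ survives---runs into exactly the difficulty you flag at the end: the ranks of blocks outside $F$ can shift when you reverse the two end edges of $F$ (for instance, if $b_{k-1}$ covered a second sink $d_{k-1}\ne b_k$ in $H$, then $b_{k-1}$ is \emph{not} a sink in $H'$ and your claim $\rank_{H'}(p)=2$ already fails), and your case ``$F'$ uses a reversed edge'' does not immediately produce a contradiction without more structure. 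The ``shortest'' choice is the missing organizing idea that makes these headaches disappear.

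Your closing suggestion---rephrase~(a) as a single peak flip that shortens a forbidden $P$-path by one while keeping the same rightmost block---is a reasonable alternative route, and in fact dovetails with the paper's choice: iterating it is the same as applying~(a) to the shortest path first.
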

\begin{proof}
We replace a specific proof by diagrammatic examples.

(a) It follows from the definition of a $P$-path and the rank condition for a forbidden $P$-path.
\begin{center}
\includegraphics[width=0.8\linewidth]{./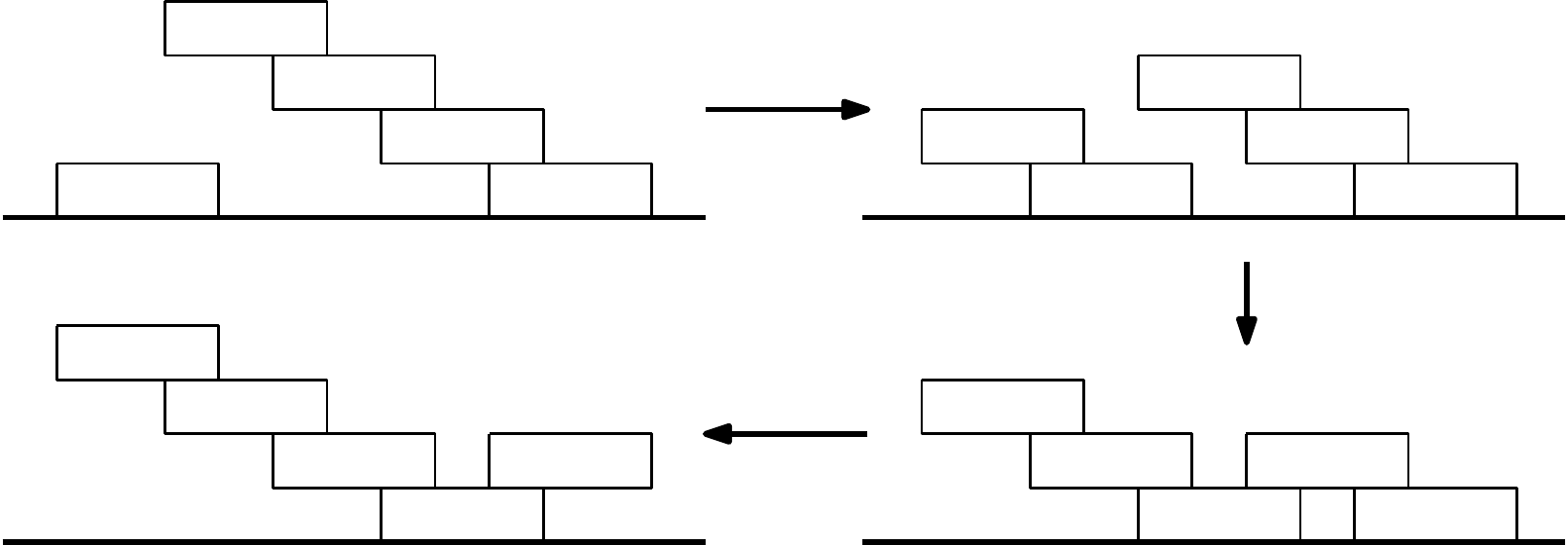}
\end{center}

(b) Choose the shortest forbidden $P$-path, and apply the procedure in (a) to it.
\begin{center}
\includegraphics[width=0.87\linewidth]{./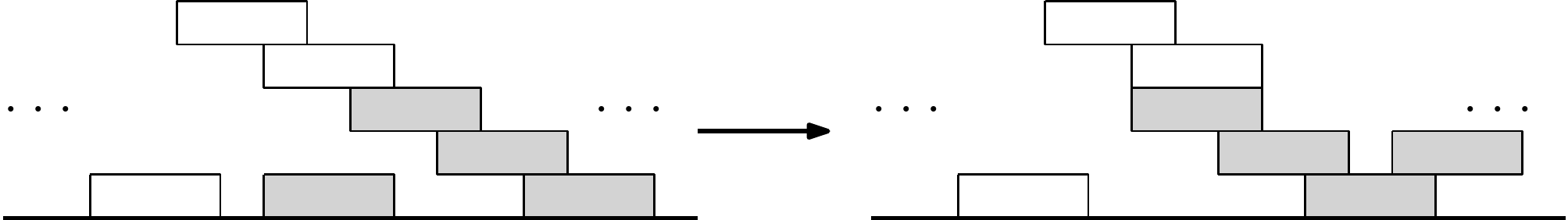}
\end{center}
\end{proof}

For $a,\ell\ge 1$, let $M_{(\ell|a)}$ be the set of heaps $H$ of $P$ consisting of $a+\ell+1$ such that $H$ has $\ell+1$ sinks and satisfies one of the following conditions:
\begin{enumerate}[label=(\roman*)]
\item the number of blocks of rank 2 is equal to 1;
\item the number of blocks of rank 2 is equal to 2. Also denoting the two blocks of rank 2 by $p$ and $r$,
\begin{enumerate}[label=(\Alph*)]
\item there is a block $q$ of rank 1 such that $(p,q,r)$ is a flippable triple,
\item there is no block $q'$ other than $q$ such that $p\rightarrow q'$ or $r\rightarrow q'$, and
\item $H$ has no forbidden $P$-path.
\end{enumerate}
\end{enumerate}
\begin{exam}
Let $a = 4$ and $\ell=2$. The following two heaps are examples of elements in $M_{(\ell\mid a)}$.
\begin{center}
\includegraphics[width=0.85\linewidth]{./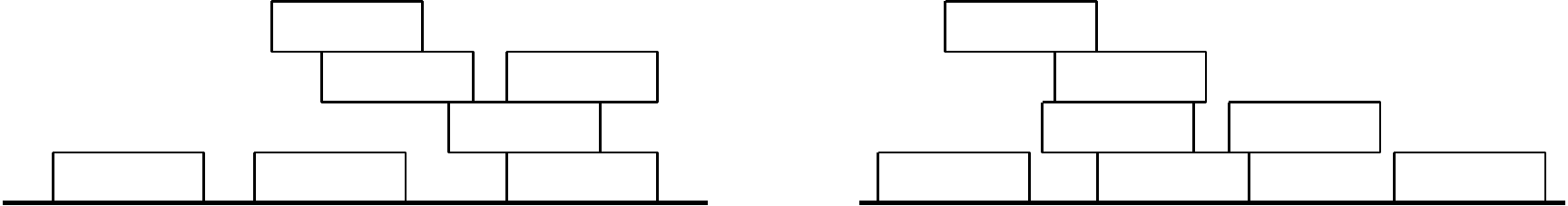}
\end{center}

In contrary, the following heaps are not included in $M_{(\ell\mid a)}$.
\begin{center}
\includegraphics[width=0.85\linewidth]{./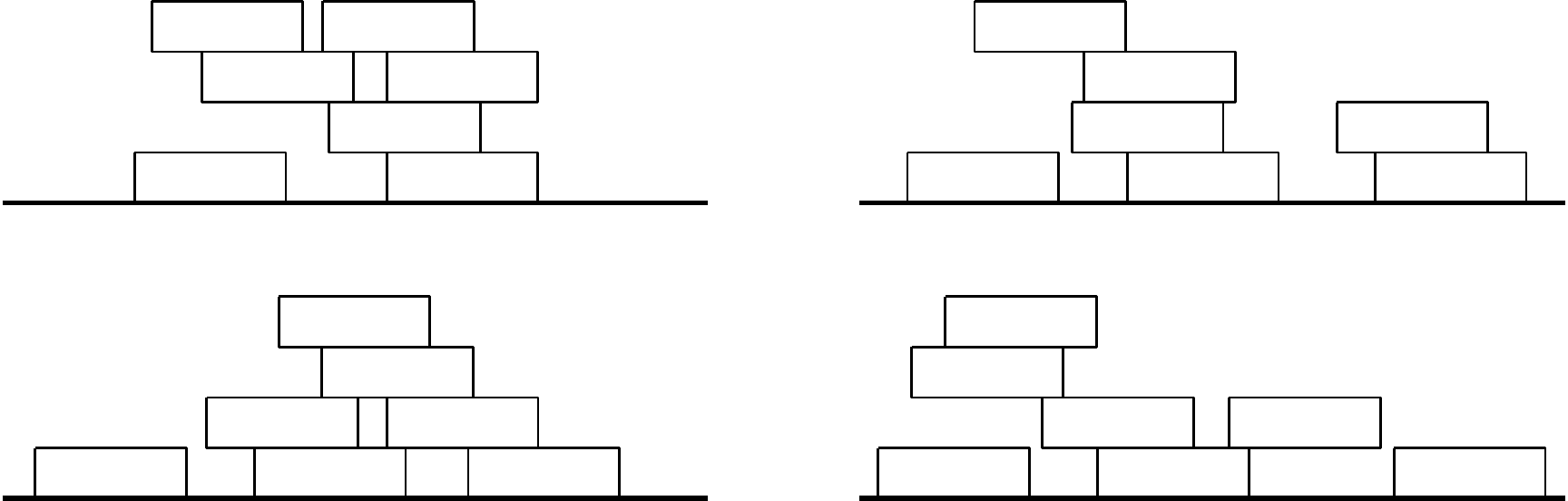}
\end{center}
Indeed, the North-West heap has only 2 sinks. The North-East, South-West, South-East heaps violate the conditions (A), (B), (C), respectively.
\end{exam}

As before, we first check the following identity related to the monomial symmetric
functions indexed by partitions of hook shapes: for $a\ge 2$ and $\ell\ge 0$, 
\begin{equation} \label{eq:ep=m_hook}
e_{\ell+1}(\xx) p_a(\xx) = m_{(a+1, 1^{\ell})}(\xx) + m_{(a, 1^{\ell+1})}(\xx).
\end{equation}
\begin{thm} \label{thm:m_hook}
For $a,\ell\ge 1$, we have
\begin{equation} \label{eq:mono_hook}
\mono_{(a+1,1^\ell)}(\uu) = \sum_{H\in M_{(\ell\mid a)}} u_{\ww_H} \mod \II_P.
\end{equation}
Consequently, let $X_P(\xx,q;\mu) = \sum_\lambda c_\lambda(q) e_{\lambda}(\xx)$, then
\[
c_{(a+1,1^\ell)}(q) = \sum_H q^{\asc_P(H)},
\]
where the sum runs over all heaps $H$ of type $\mu$ contained in $M_{(\ell\mid a)}$.
\end{thm}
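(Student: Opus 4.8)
The plan is to mimic the proof of Theorem~\ref{thm:m_two_column}: start from the elementary-monomial transition identity~\eqref{eq:ep=m_hook}, lift it to a congruence modulo $\II_P$ using Proposition~\ref{prop:Omega=pp} and the definitions of $\pp_k(\uu)$ and $\mono_\lambda(\uu)$, and then carefully extract which heaps survive. Since $e_{\ell+1}(\xx)p_a(\xx) = m_{(a+1,1^\ell)}(\xx) + m_{(a,1^{\ell+1})}(\xx)$ and $\mono_{(a+1,1^\ell)}(\uu)$ is obtained from the monomial-to-elementary transition, the natural first move is to use Corollary~\ref{cor:pp=unique_sink} to write $\ee_{\ell+1}(\uu)\pp_a(\uu)$ as a sum of $u_{\ww_H}$ over pairs of a "bottom" chain of $\ell+1$ blocks (the $\ee_{\ell+1}$ factor, giving $\ell+1$ sinks in $P$-decreasing order) and a "unique-sink" heap $H_0$ on $a$ blocks stacked above; then collect terms $u_\ww \equiv u_{\ww'}$ whenever $\ww,\ww'$ correspond to the same heap after local flips, using Proposition~\ref{prop:algebraic_expression_local_flip}. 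One should induct on $\ell$ (or on $a$), with the base case handled by Corollary~\ref{cor:pp=unique_sink} (the case $\ell=0$ is essentially $\mono_{(a+1)}(\uu)=\pp_{a+1}(\uu)$ modulo $\II_P$ up to the already-noted identification $p_k=m_k$).

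The key combinatorial step is to show that, after performing all local flips that push type-$W$-like pieces and forbidden $P$-paths into canonical form, the heaps appearing with \emph{nonzero} coefficient are exactly those in $M_{(\ell\mid a)}$, each with multiplicity one. Concretely, for a word $\ww$ in the product $\ee_{\ell+1}(\uu)\pp_a(\uu)$ the associated heap $H_\ww$ has at least $\ell+1$ sinks and rank at most $3$ (the $\pp_a$ factor contributes a unique-sink configuration, which can reach rank $2$ only; stacking $\ell+1$ further sinks underneath can bump some blocks to rank $3$). I would argue: (i) if $H_\ww$ has $\ge 3$ blocks of rank $\ge 2$ or a forbidden $P$-path, then its contribution is cancelled — this is where Lemma~\ref{lem:forbidden_path}(b) enters, letting us flip away forbidden $P$-paths and match such heaps in a sign-reversing fashion against the $m_{(a,1^{\ell+1})}$ part; (ii) the heaps with exactly one block of rank $2$ contribute with the multiplicity dictated by the number of sinks that can be chosen as the "top" of the $\pp_a$ chain, which after the binomial bookkeeping (as in~\eqref{eq:numbers_words=H_w} and the identity $\sum_j(-1)^{j-k}\binom{\ell}{j}\binom{j}{k}=\delta_{\ell,k}$) collapses to $1$; (iii) the heaps with exactly two blocks of rank $2$ survive precisely when conditions (A), (B), (C) hold, since (A)+(B) are exactly what is needed for the two rank-$2$ blocks to arise from a single flippable triple over a unique-sink heap, and (C) rules out the forbidden $P$-paths that were already accounted for in step (i). Finally, once~\eqref{eq:mono_hook} is established, the formula for $c_{(a+1,1^\ell)}(q)$ is immediate from Corollary~\ref{cor:strategy}, Proposition~\ref{prop:Omega=hm}, and the identity $\inv_P(\ww)=\asc_P(H)$ for $\ww\in W(H)$.

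The main obstacle I anticipate is the bookkeeping in step (i)–(iii): unlike the two-column case, where rank is bounded by $2$ and the only ambiguity is type-$M$ versus type-$W$, here rank-$3$ blocks appear and the combinatorics of ``forbidden $P$-paths'' must be controlled precisely. In particular, one must verify that Lemma~\ref{lem:forbidden_path}(b) applies — i.e., that whenever a heap coming from $\ee_{\ell+1}(\uu)\pp_a(\uu)$ contains a forbidden $P$-path, \emph{all} its forbidden $P$-paths share a common rightmost block $p$ — so that the flips of part (b) give a well-defined cancelling partner. I expect this to follow from the rigid structure of a unique-sink heap of rank $\le 2$ with a single chain of sinks appended below (the $\mathbf{(3+1)}$-freeness of $P$ forces each block to cover at most two blocks, which severely limits how two forbidden $P$-paths can branch), but making this rigorous — and checking it interacts correctly with the multiplicity count so that no heap in $M_{(\ell\mid a)}$ is over- or under-counted — is the delicate part. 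A secondary technical point is to confirm that the induction hypothesis is set up so that the $m_{(a,1^{\ell+1})}$ summand in~\eqref{eq:ep=m_hook} is exactly absorbed by $\mono_{(a,1^{\ell+1})}(\uu)$, which requires the shape $(a,1^{\ell+1})$ to again be a hook so the inductive description is self-consistent.
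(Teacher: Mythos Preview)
Your overall strategy---lift \eqref{eq:ep=m_hook} to $\UU/\II_P$, expand $\ee_{\ell+1}(\uu)\pp_a(\uu)$ via Corollary~\ref{cor:pp=unique_sink}, and peel off the two hook pieces---is the right one, and is what the paper does. But several concrete pieces of your plan are wrong and would not lead to a proof.

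\textbf{Induction variable.} The identity $\ee_{\ell+1}(\uu)\pp_a(\uu)\equiv \mono_{(a+1,1^\ell)}(\uu)+\mono_{(a,1^{\ell+1})}(\uu)$ reduces the hook $(a+1,1^\ell)$ to the hook $(a,1^{\ell+1})$, which has \emph{larger} $\ell$ and \emph{smaller} $a$. So induction on $\ell$ with base $\ell=0$ cannot close; you must induct on $a$, with base case $a=1$ (where $(2,1^\ell)$ is two-column and Theorem~\ref{thm:m_two_column} applies).

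\textbf{Rank is not bounded.} Your claim that ``the $\pp_a$ factor contributes a unique-sink configuration, which can reach rank $2$ only'' is false: a unique-sink heap on $a$ blocks can have rank up to $a$ (think of a zig-zag tower). What \emph{is} bounded is the number of rank-$2$ blocks: a unique-sink heap has at most two of them, by $\mathbf{(3+1)}$-freeness. This is exactly why the definition of $M_{(\ell\mid a)}$ constrains only the number of sinks and the number of rank-$2$ blocks, not the total rank. Consequently your step (i), ``$\ge 3$ blocks of rank $\ge 2$ get cancelled'', is aimed at the wrong target.

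\textbf{No signs, no multiplicities.} There is nothing to ``cancel in a sign-reversing fashion'' and no binomial identity of the form $\sum_j(-1)^{j-k}\binom{\ell}{j}\binom{j}{k}=\delta_{\ell,k}$ is needed: all terms in $\ee_{\ell+1}(\uu)\pp_a(\uu)$ are positive, and (unlike the two-column situation) distinct words $\ww\in\EP_{\ell+1,a}$ give distinct heaps $H_\ww$ because the word records which $\ell+1$ sinks came from the $\ee_{\ell+1}$ factor and which came from $\pp_a$. What is required is a \emph{bijection}
\[
\Phi:\EP_{\ell+1,a}\longrightarrow M_{(\ell\mid a)}\,\cup\, M_{(\ell+1\mid a-1)}
\]
with $H_\ww\sim \Phi(\ww)$, built by a case analysis on whether $H_\ww$ has $\ell+1$ or $\ell+2$ sinks and whether it has one or two rank-$2$ blocks. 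In the two-rank-$2$-block cases, Lemma~\ref{lem:forbidden_path} is used not to cancel anything but to \emph{normalize} $H_\ww$ via local flips into the canonical form satisfying condition~(C); its hypothesis (all forbidden $P$-paths share a rightmost block) holds automatically because that rightmost block is forced to be the sink $\ww_{\ell+2}$ of the $\pp_a$ factor.
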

\begin{proof}
We induct on $a$. For the base case $a=1$, the partition $(a+1,1^\ell)$ is not only of hook shape, but also of two-column shape. In this case, \eqref{eq:mono_two-column} and \eqref{eq:mono_hook} coincide.

For $a\ge 2$, from \eqref{eq:ep=m_hook}, we have
\begin{equation}
\ee_{\ell+1}(\uu) \pp_a(\uu) \equiv \mono_{(a+1,1^{\ell})}(\uu) + \mono_{(a,1^{\ell+1})}(\uu) \mod \II_P.
\end{equation}
Similar to the proof of Theorem~\ref{thm:m_two_column}, let $\EP_{\ell+1,a}$ be a multiset given by
\[
\EP_{\ell+1,a} = \left\{ \ww_1\cdots\ww_{\ell+a+1} ~\middle\vert \begin{array}{l}
    \ww_1 >_P \dots >_P \ww_{\ell+1} \mbox{ and $\ww_{\ell+2}\cdots\ww_{\ell+a+1}$ has} \\
    \mbox{no $P$-descents and no left-to-right $P$-maxima}
  \end{array}\right\},
\]
then
\[
\ee_{\ell+1}(\uu) \pp_a(\uu) \equiv \sum_{\ww\in\EP_{\ell+1,a}} u_{\ww} \mod \II_P.
\]
Hence by induction, it suffices to find a bijective map
\[
\Phi : \EP_{\ell+1,a} \longrightarrow M_{(\ell\mid a)} \cup M_{(\ell+1\mid a-1)}
\]
such that $H_\ww \sim \Phi(\ww)$ for all $\ww\in\EP_{\ell+1,a}$.

Let
\begin{align*}
M_{(\ell\mid a),1} &= \{ H\in M_{(\ell\mid a)} \mid \mbox{$H$ has a unique block of rank 2} \} \mbox{ and} \\
M_{(\ell\mid a),2} &= \{ H\in M_{(\ell\mid a)} \mid \mbox{$H$ has two blocks of rank 2} \},
\end{align*}
and let $M_{(\ell+1\mid a-1),1}, M_{(\ell+1\mid a-1),2}$ be defined similarly.
Recall that $\pp_k(\uu)$ is the generating function for heaps consisting of $k$ blocks with a unique sink; see Corollary~\ref{cor:pp=unique_sink}. For a word $\ww\in\EP_{\ell+1,a}$, blocks corresponding to $\ww_1,\dots,\ww_{\ell+1}$ form sinks in $H_\ww$ by definition, and the block corresponding to $\ww_{\ell+2}$ may or may not be a sink. Then $H_\ww$ has $\ell+1$ or $\ell+2$ sinks. With this fact and the diagram of $H_\ww$ in mind, we give an explicit case-by-case description of $\Phi(\ww)$ as follows:
\begin{description}
\item[Case 1] $H_\ww$ has $\ell+1$ sinks. In this case, there is a unique block of rank 2, namely the block corresponding to $\ww_{\ell+2}$. Then define $\Phi(\ww) = H_\ww \in M_{(\ell\mid a),1}$.
\item[Case 2] $H_\ww$ has $\ell+2$ sinks. Then $H_\ww$ has at most 2 blocks of rank 2. We break this case into subcases depending on the number of blocks of rank 2.
\begin{description}
\item[Case 2a] $H_\ww$ has a unique block of rank 2. We denote this block by $q$. There are 1 or 2 blocks under $q$ in the diagram of $H_\ww$.
\begin{description}
\item[Case 2aa] There is a unique block under $q$. Then this block has to correspond to $\ww_{\ell+2}$. In this case, we define $\Phi(\ww) = H_\ww \in M_{(\ell+1\mid a-1), 1}$.
\item[Case 2ab] There are two blocks under $q$. The corresponding diagram then forms as follows:
\begin{center} \vspace{0.15cm}
\includegraphics[scale=0.75]{./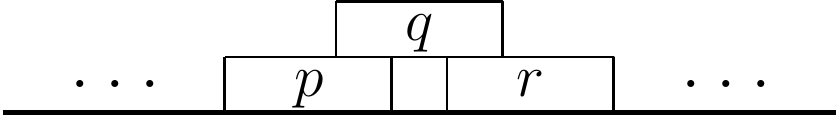}
\end{center}
Then $\ww_{\ell+2}$ corresponds to either $p$ or $r$. If it corresponds to $p$, then define $\Phi(\ww) = H_\ww\in M_{(\ell+1\mid a-1), 1}$. Otherwise, let $H'$ be the heap obtained from $H_\ww$ by a local flip at $(p,q,r)$. It is routine to check that $H'$ satisfies conditions (A), (B), (C), and hence define $\Phi(\ww) = H'\in M_{(\ell\mid a),2}$.
\end{description}
\item[Case 2b] $H_\ww$ has two blocks of rank 2. Then $H_\ww$ forms as
\begin{center} \vspace{0.15cm}
\includegraphics[scale=0.75]{./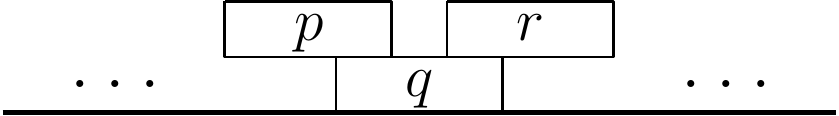},
\end{center}
where $q$ corresponds to $\ww_{\ell+2}$, and then satisfies condition (A). But it is possible that there is another block under $p$ or $r$. Again we split this case into subcases.
\begin{description}
\item[Case 2ba] $H_\ww$ satisfies conditions (B) and (C). Then define $\Phi(\ww) = H_\ww \in M_{(\ell+1\mid a-1),2}$.
\item[Case 2bb] There is a forbidden $P$-path containing $q$ as the rightmost
block. So $H_\ww$ violates condition (C), then we need to transform $H_\ww$.
Choose the shortest forbidden $P$-paths containing $q$ as the rightmost block,
and apply local flips as Lemma~\ref{lem:forbidden_path}(a).
Then Lemma~\ref{lem:forbidden_path}(b) guarantees that the resulting heap $H'$ satisfies conditions (A), (B), (C), so define $\Phi(\ww) = H'\in M_{(\ell\mid a),2}$.
\item[Case 2bc] There is no forbidden $P$-path having $q$ as the rightmost block, and there is another block under $r$. Then $H_\ww$ forms as
\begin{center} \vspace{0.15cm}
\includegraphics[scale=0.75]{./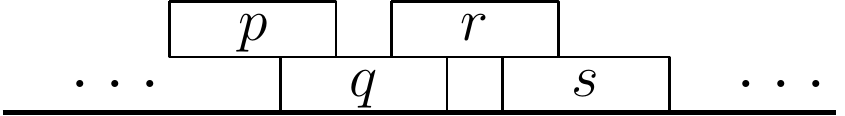}.
\end{center}
Let $H'$ be the heap obtained from $H_\ww$ by a local flip at $(q,r,s)$. Since $H_\ww$ has no forbidden $P$-path containing $q$ as the rightmost block, $H'$ do not allow any forbidden $P$-path. Moreover, conditions (A) and (C) hold for $H'$. We define $\Phi(\ww) = H'\in M_{(\ell\mid a),2}$.
\end{description}
\end{description}
\end{description}
By the construction of $\Phi$, we have that $H_\ww \sim \Phi(\ww)$ for all $\ww\in\EP_{\ell+1,a}$. Also it is straightforward that $\Phi$ is bijective, except the case where $\Phi(\ww)\in M_{(\ell\mid a), 2}$.
This happens in Cases 2ab, 2bb and 2bc. Let $\ww$ be a word such that $\Phi(\ww)\in M_{(\ell\mid a), 2}$, and $(p,q,r)$ be a unique flippable triple in $\Phi(\ww)$ such that $q$ is of rank 1 and $p,r$ are of rank 2.
Also let $H'$ be the heap obtained from $\Phi(\ww)$ by a local flip at $(p,q,r)$.
Observing $H'$ provides what case $\ww$ is in. First one can directly check that $\ww$ is in Case 2ab if and only if $H'$ has a unique block of rank $2$.
Suppose that $H'$ has at least two blocks of rank 2. If $\ww$ is in Case 2bb, then $H_\ww,H'$ and $\Phi(\ww)$ are depicted as follows:
\vspace{0.15cm}
\begin{align*}
H_\ww &= \vcenter{\hbox{\includegraphics[scale=0.75]{./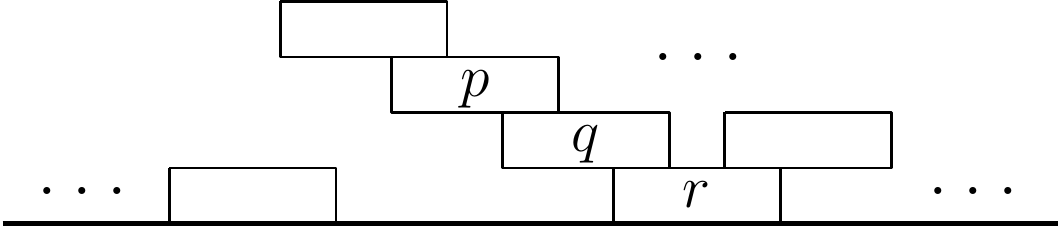}}} \\
\quad \\
H' &= \vcenter{\hbox{\includegraphics[scale=0.75]{./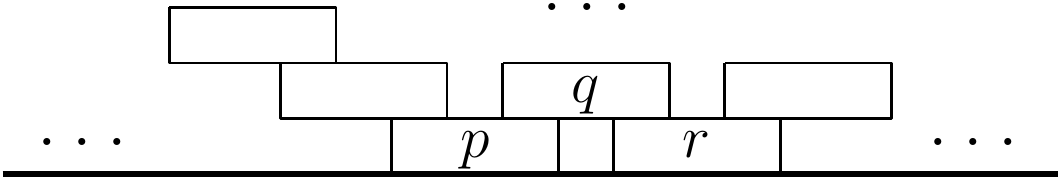}}} \\
\quad \\
\Phi(\ww) &= \vcenter{\hbox{\includegraphics[scale=0.75]{./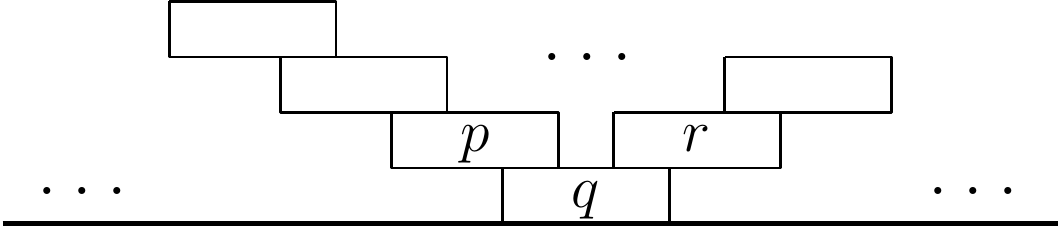}}}
\end{align*}
Hence, in $H'$, there is a block $s$ of rank 2 different from $q$ such that $s$ is placed above $r$.
On the other hand, if $\ww$ is in Case 2bc, then there is no such block $s$. Therefore we partition $M_{(\ell\mid a),2}$ as follows.
\begin{align*}
M'_{(\ell\mid a), 2} &= \{ H \in M_{(\ell\mid a), 2} \mid \mbox{$H'$ has a unique block of rank 2} \}, \\
M''_{(\ell\mid a), 2} &= \left\{ H \in M_{(\ell\mid a), 2} \middle\vert \begin{array}{l}
    \mbox{$H'$ has a block $s$ of rank 2} \\
    \mbox{different from $q$ such that $s\rightarrow r$}
  \end{array}\right\}, \mbox{ and} \\
M'''_{(\ell\mid a), 2} &= M_{(\ell\mid a), 2)} \setminus (M'_{(\ell\mid a), 2}\cup M''_{(\ell\mid a),2}).
\end{align*}
By the above observation, for a word $\ww$ such that $\Phi(\ww)\in M_{(\ell\mid a), 2}$, $\Phi(\ww)\in M'_{(\ell\mid a), 2}$ if $\ww$ is in Case 2ab, $\Phi(\ww)\in M''_{(\ell\mid a), 2}$ if $\ww$ is in Case 2bb, and $\Phi(\ww)\in M'''_{(\ell\mid a), 2}$ if $\ww$ is in Case 2bc. Using this partition, it is routine to show that the map
\[
\Phi: \EP_{\ell+1,a} \longrightarrow M_{(\ell\mid a),1}\cup M'_{(\ell\mid a),2}\cup M''_{(\ell\mid a),2}\cup M'''_{(\ell\mid a),2}\cup M_{(\ell+1\mid a-1),1}\cup M_{(\ell+1\mid a-1),2}
\]
is bijective.
\end{proof}

\begin{exam}
Let $P=P(2,3,4,5,5)$ and $\mu=(1^5)$ again. For $a=3$ and $\ell=1$, there are three heaps of type $\mu$ contained in $M_{(\ell\mid a)}$:
\begin{center}
\includegraphics[scale=1]{./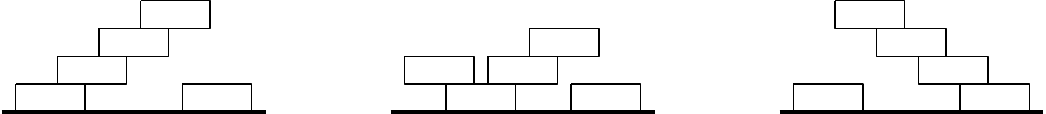}
\end{center}
From left to right, the ascent number of each heap is 3, 2, and 1, respectively. Then by Theorem~\ref{thm:m_hook}, we have $c_{4,1}(q) = q^3 + q^2 + q$. Also there is no heap of type $\mu$ contained in $M_{(2\mid 2)}$, and hence $c_{3,1,1}(q) = 0$.
\end{exam}

\subsubsection{A recurrence relation}
In this subsection, we give a recurrence relation for $\mono_\lambda(\uu)$,
which is equivalent to Harada--Precup's conjecture~\cite[Conjecture~8.1]{HP2019}.
Recall that for a natural unit interval order $P$, the height \( h \) of $P$ is
the maximum number of independent vertices in $\inc(P)$, or equivalently,
the maximum length of chains in $P$.
Then, by definition, $\ee_k(\uu) = 0$ for all $k>h$.

To show Harada--Precup's conjecture, we need the following simple lemma which
is straightforward from the definition of monomial symmetric functions.
\begin{lem} \label{lem:em=m}
For a partition $\nu$ and $k\ge \ell(\nu)$, let $e_k(\xx) m_\nu(\xx) = \sum_\rho d_\rho m_\rho(\xx)$. Then $d_{\nu+(1^k)} = 1$ where $\nu+(1^k) = (\nu_1+1,\nu_2+1,\dots,\nu_k+1)$, and if $\rho \neq \nu+(1^k)$ and $d_\rho \neq 0$, then $\ell(\rho) > k$.
\end{lem}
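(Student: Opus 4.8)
The plan is to expand the product into monomials and read off each coefficient from its dominant monomial. Write $e_k(\xx) = \sum_S \xx^{\mathbf{1}_S}$, where $S$ ranges over all $k$-element subsets of $\mathbb{P}$ and $\mathbf{1}_S$ is the corresponding $0$--$1$ exponent vector, and $m_\nu(\xx) = \sum_\alpha \xx^\alpha$, where $\alpha$ ranges over all exponent vectors whose sorted sequence of nonzero entries is $\nu$. Then
\[
  e_k(\xx)\, m_\nu(\xx) = \sum_{S,\alpha} \xx^{\mathbf{1}_S + \alpha},
\]
and since this symmetric function is a nonnegative combination of the $m_\rho$, for each partition $\rho$ the coefficient $d_\rho$ equals the coefficient of the dominant monomial $\xx^\rho = x_1^{\rho_1} x_2^{\rho_2}\cdots$, namely
\[
  d_\rho = \#\bigl\{ (S,\alpha) : |S| = k,\ \alpha \text{ a rearrangement of } \nu,\ \mathbf{1}_S + \alpha = \rho \bigr\}.
\]

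First I would settle $\rho = \nu + (1^k)$. Using the convention $\nu_{\ell(\nu)+1} = \cdots = \nu_k = 0$ (legitimate because $k \ge \ell(\nu)$), this $\rho$ has support exactly $\{1,\dots,k\}$ and is genuinely a partition with $k$ parts. Since $\mathbf{1}_S$ and $\alpha$ are both nonnegative, any witnessing pair must have $S \subseteq \{1,\dots,k\}$, hence $S = \{1,\dots,k\}$, and then $\alpha = \rho - \mathbf{1}_S = \nu$. This pair is valid and clearly unique, so $d_{\nu+(1^k)} = 1$.

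For the second assertion I would argue via supports. If $d_\rho \neq 0$, fix a witnessing pair $(S,\alpha)$; every $i \in S$ has $(\mathbf{1}_S + \alpha)(i) \ge 1$, so $\mathbf{1}_S + \alpha$ has at least $|S| = k$ nonzero entries, i.e.\ $\ell(\rho) \ge k$. The only case to rule out is $\ell(\rho) = k$: then the support of $\mathbf{1}_S + \alpha$ has size exactly $k$ and already contains $S$, so it equals $S$, forcing $\alpha$ to be supported on $S$. Consequently the multiset of parts of $\rho$ equals $\{\,1 + \alpha_i : i \in S\,\} = \{1+\nu_1,\dots,1+\nu_{\ell(\nu)}\} \cup \{\,\underbrace{1,\dots,1}_{k-\ell(\nu)}\,\}$, whose sorted form is $\nu + (1^k)$. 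Hence $\rho \neq \nu+(1^k)$ together with $d_\rho \neq 0$ forces $\ell(\rho) > k$.

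I expect no genuine obstacle: the statement is essentially the ``leading term'' part of the Pieri-type rule for $e_k\, m_\nu$, and the only point needing care is the bookkeeping in the last step — verifying that the support of $\mathbf{1}_S + \alpha$ collapses onto $S$ and then reading off the sorted exponent multiset — while keeping the zero-padding of $\nu$ consistent so that $\nu + (1^k)$ has exactly $k$ parts.
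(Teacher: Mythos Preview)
Your proof is correct and is exactly in the spirit of what the paper intends: the paper does not give a proof, stating only that the lemma is ``straightforward from the definition of monomial symmetric functions,'' and your argument is precisely such a direct verification by reading off the coefficient of the leading monomial $x^\rho$ in the product.
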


\begin{thm}[{\cite[Conjecture~8.1]{HP2019}}] \label{thm:m=em}
  Let \( P \) be a natural unit interval order, and $h$ the height of $P$.
  Then, for a partition $\lambda$ of length $\ell\ge h$, we have 
  \[
    \mono_\lambda(\uu) \equiv
    \begin{cases}
      0 & \mbox{if \( \ell > h \),} \\
      \ee_h(\uu)\mono_{\lambda^-}(\uu) & \mbox{if \( \ell = h \),}
    \end{cases}
  \]
  modulo $\II_P$, where ${\lambda^-} = (\lambda_1-1,\dots,\lambda_\ell-1)$.
\end{thm}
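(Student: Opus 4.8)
The plan is to transfer the entire computation from the noncommutative world $\UU/\II_P$ to the ordinary (commutative) ring of symmetric functions, where the statement degenerates into an elementary identity. Let $\Sym_\mathbb{Z}(\yy)=\mathbb{Z}[e_1(\yy),e_2(\yy),\dots]$ be the ring of symmetric functions in commuting variables $y_1,y_2,\dots$, so that the $e_k(\yy)$ are algebraically independent generators. By Theorem~\ref{thm:e_k_commute} the images of $\ee_1(\uu),\ee_2(\uu),\dots$ pairwise commute in $\UU/\II_P$, so there is a unique ring homomorphism $\phi\colon\Sym_\mathbb{Z}(\yy)\to\UU/\II_P$ sending $e_k(\yy)\mapsto\ee_k(\uu)$. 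By Definition~\ref{def:mono} we have $\phi(m_\lambda(\yy))\equiv\mono_\lambda(\uu)\pmod{\II_P}$ for every $\lambda$, and $\phi(e_k(\yy))=\ee_k(\uu)=0$ for $k>h$, since a strictly $P$-decreasing word of length $k$ would be a chain of $k$ elements in $P$ and no such chain exists once $k>h$. Hence $\phi$ annihilates the ideal $J\subseteq\Sym_\mathbb{Z}(\yy)$ generated by $\{e_k(\yy):k>h\}$, and it suffices to establish the two claimed identities for $m_\lambda(\yy)$ modulo $J$.

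For the case $\ell>h$, I would use Proposition~\ref{prop:m=Ne}: in $m_\lambda(\yy)=\sum_\mu N_{\lambda,\mu}\,e_\mu(\yy)$ only $\mu$ with $\mu'\trianglelefteq\lambda$ contribute. Now dominance is reverse to length — a partition dominated by one of length $\ell(\lambda)$ has at least $\ell(\lambda)$ parts — so $\mu'\trianglelefteq\lambda$ forces $\mu_1=\ell(\mu')\ge\ell(\lambda)>h$, and therefore $e_\mu(\yy)=e_{\mu_1}(\yy)e_{\mu_2}(\yy)\cdots\in J$. Thus $m_\lambda(\yy)\in J$, so $\mono_\lambda(\uu)=\phi(m_\lambda(\yy))=0$ in $\UU/\II_P$; in fact this already holds in $\UU$, because the leading factor $\ee_{\mu_1}(\uu)$ vanishes.

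For the case $\ell=h$, I would invoke Lemma~\ref{lem:em=m} with $\nu=\lambda^-$ and $k=h$ — noting that $\ell(\lambda^-)\le h$ and $\lambda^-+(1^h)=\lambda$ — to obtain, in $\Sym_\mathbb{Z}(\yy)$, the identity $e_h(\yy)\,m_{\lambda^-}(\yy)=m_\lambda(\yy)+\sum_{\rho}d_\rho\,m_\rho(\yy)$, where every partition $\rho$ occurring in the sum has $\ell(\rho)>h$. Applying $\phi$ and using the previous paragraph to kill each $\phi(m_\rho(\yy))=\mono_\rho(\uu)=0$, this yields $\ee_h(\uu)\,\mono_{\lambda^-}(\uu)\equiv\mono_\lambda(\uu)\pmod{\II_P}$, which is precisely the desired recurrence.

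In this line of attack there is essentially no obstacle left to overcome: the genuinely hard input is the commutation relation of Theorem~\ref{thm:e_k_commute} (and behind it the theory of local flips), which is exactly what legitimizes the homomorphism $\phi$; granting that, the recurrence is a short commutative bookkeeping argument. The only points I would take care to check are the edge case $\lambda_h=1$, where $\lambda^-$ has fewer than $h$ parts — one must verify that Lemma~\ref{lem:em=m} still applies and that $\lambda^-+(1^h)$ is read as adding $1$ to the first $h$ (possibly zero) parts — and the standard fact that dominance order refines the opposite of the length order.
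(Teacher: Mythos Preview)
Your proof is correct and is essentially the same as the paper's: both arguments use Proposition~\ref{prop:m=Ne} together with the vanishing $\ee_k(\uu)=0$ for $k>h$ to handle $\ell>h$, and then apply Lemma~\ref{lem:em=m} with $\nu=\lambda^-$, $k=h$ to obtain the recurrence when $\ell=h$, killing the remaining $\mono_\rho(\uu)$ terms via the first case. Your explicit packaging via the homomorphism $\phi\colon\Sym_\mathbb{Z}(\yy)\to\UU/\II_P$ is the same transfer the paper invokes implicitly (and made explicit earlier in the proof of Proposition~\ref{prop:Omega=pp}); your remark that the $\ell>h$ vanishing already holds in $\UU$ before passing to $\II_P$ is also noted in the paper.
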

\begin{proof}
Since $\ee_k(\uu) = 0$ for all $k>h$, we have that $\ee_\nu(\uu) = 0$
for any partition $\nu = (\nu_1, \nu_2,\dots)$ with $\nu_1>h$. On the other hand,
by Proposition~\ref{prop:m=Ne}, if $\nu_1 < \ell$, then $N_{\lambda,\nu} = 0$.
Therefore we deduce that $\mono_\lambda(\uu) = 0$ if $\ell > h$.
Suppose now \( \ell=h \). As before, the noncommutative $P$-analogue of
Lemma~\ref{lem:em=m} also holds modulo \( \II_P \).
Thus applying Lemma~\ref{lem:em=m} to the case $\nu={\lambda^-}$ and $k=h$, we have
\[
\ee_h(\uu)\mono_{\lambda^-}(\uu) \equiv \mono_\lambda(\uu) + \sum_{\ell(\rho)>h} d_\rho \mono_\rho(\uu) \mod \II_P.
\]
However we already know that $\mono_\rho(\uu) \equiv 0$ module \( \II_P \)
for any partition $\rho$ with $\ell(\rho)>h$, and thus this completes the proof.
\end{proof}
By the same argument as in \cite{HP2019}, Theorem~\ref{thm:m=em} implies $e$-positivity of $X_P(\xx,q;\mu)$ for natural unit interval orders $P$ of height $2$.
We remark that Cho and Huh~\cite{CH2019} also deduced this result via a different approach.
\begin{cor}
  Let $P$ be a natural unit interval order of height $2$ and
  $\lambda=(\lambda_1,\dots,\lambda_\ell)$ a partition. Then we have
  \[
  \mono_\lambda(\uu) \equiv
  \begin{cases}
  ~ \pp_{\lambda_1}(\uu) & \mbox{if } \ell=1, \\
  ~ \ee_{2^{\lambda_2}}(\uu) \pp_{\lambda_1-\lambda_2}(\uu) & \mbox{if } \ell=2, \\
  ~ 0 & \mbox{if } \ell>2,
  \end{cases}
  \]
  modulo $\II_P$. Consequently, let $X_P(\xx,q;\mu) = \sum_\lambda c_\lambda(q)
  e_\lambda(\xx)$, then $c_\lambda(q)=0$ for any $\lambda$ with $\ell(\lambda)> 2$,
  and
  \[
  c_\lambda(q) = \sum_\ww q^{\inv_P(\ww)},
  \]
  where $\lambda = (\lambda_1, \lambda_2)$ ($\lambda_2$ may equal $0$) and
  $\ww$ ranges over all words such that for $1\le i\le \lambda_2$, $\ww_{2i}>_P
  \ww_{2i-1}$, and the consecutive subword
  $\ww_{2\lambda_2+1}\cdots\ww_{\lambda_1+\lambda_2}$ has no $P$-descents
  and no nontrivial left-to-right $P$-maxima.
\end{cor}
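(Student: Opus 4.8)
The plan is to reduce the three congruences to identities among ordinary symmetric functions, in the spirit of the proof of Proposition~\ref{prop:Omega=pp}, and then to feed the resulting positive \( \uu \)-monomial expansions into Corollary~\ref{cor:strategy}. To set this up, let \( R\subseteq\UU/\II_P \) be the subring generated by the classes of \( \ee_1(\uu),\ee_2(\uu),\dots \); these generators pairwise commute by Theorem~\ref{thm:e_k_commute}, so \( R \) is commutative. Since \( \Sym_{\mathbb Z} \) is freely generated as a commutative ring by \( e_1,e_2,\dots \), there is a ring homomorphism \( \phi\colon\Sym_{\mathbb Z}\to R \) with \( \phi(e_k)=\ee_k(\uu)\bmod\II_P \). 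Because \( h_k \), \( p_k \) and \( m_\lambda \) are pinned down inside \( \Sym_{\mathbb Z} \) by exactly the same integral relations that define \( \hh_k(\uu) \) (relation~\eqref{eq:relation_eh}), \( \pp_k(\uu) \) (relations~\eqref{eq:relation_peh} and \eqref{eq:def_pp}) and \( \mono_\lambda(\uu) \) (Proposition~\ref{prop:m=Ne} together with \eqref{eq:def_mono}), we get \( \phi(h_k)\equiv\hh_k(\uu) \), \( \phi(p_k)\equiv\pp_k(\uu) \) and \( \phi(m_\lambda)\equiv\mono_\lambda(\uu) \) modulo \( \II_P \). As \( P \) has height \( h=2 \), we have \( \ee_k(\uu)=0 \) in \( \UU \) for all \( k>2 \), so \( \phi \) annihilates \( e_3,e_4,\dots \) and factors through \( \Sym_{\mathbb Z}/(e_3,e_4,\dots) \), which we identify with the ring \( \mathbb Z[x_1,x_2]^{\mathfrak S_2} \) of symmetric polynomials in two variables via \( e_k\mapsto e_k(x_1,x_2) \).

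It then suffices to evaluate the relevant monomial symmetric functions in two variables. One reads off immediately that \( m_\lambda(x_1,x_2)=0 \) whenever \( \ell(\lambda)>2 \), that \( m_{(\lambda_1)}(x_1,x_2)=x_1^{\lambda_1}+x_2^{\lambda_1}=p_{\lambda_1}(x_1,x_2) \), and that for \( \lambda_1\ge\lambda_2\ge 1 \),
\[
m_{(\lambda_1,\lambda_2)}(x_1,x_2)=(x_1x_2)^{\lambda_2}\bigl(x_1^{\lambda_1-\lambda_2}+x_2^{\lambda_1-\lambda_2}\bigr)=e_2(x_1,x_2)^{\lambda_2}\,p_{\lambda_1-\lambda_2}(x_1,x_2),
\]
with the convention \( p_0=1 \), which also accounts for the case \( \lambda_1=\lambda_2 \). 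Applying \( \phi \) turns these into the three asserted congruences for \( \mono_\lambda(\uu) \) modulo \( \II_P \); in particular the case \( \ell=1 \) reads \( \mono_{(k)}(\uu)\equiv\pp_k(\uu) \). (Alternatively, the cases \( \ell\ge 2 \) could be obtained by iterating Theorem~\ref{thm:m=em} down to the \( \ell=1 \) case.)

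For the statement about \( c_\lambda(q) \): by Proposition~\ref{prop:Omega=hm} and Corollary~\ref{cor:strategy} (with \( g_\lambda=h_\lambda \) and \( \ff_\lambda=\mono_\lambda \)), together with \( \omega X_P(\xx,q;\mu)=\sum_\lambda c_\lambda(q)h_\lambda(\xx) \), we have \( c_\lambda(q)=\langle\mono_\lambda(\uu),\gamma_\mu\rangle \); hence \( c_\lambda(q)=0 \) when \( \ell(\lambda)>2 \). For \( \lambda=(\lambda_1,\lambda_2) \) (with \( \lambda_2=0 \) allowed), since \( \gamma_\mu\in\II_{P,q}^\perp \) we may replace \( \mono_\lambda(\uu) \) by \( \ee_{2^{\lambda_2}}(\uu)\pp_{\lambda_1-\lambda_2}(\uu) \) and then, using \eqref{eq:def_ee} for \( \ee_2(\uu)=\sum_{a>_P b}u_au_b \) and Theorem~\ref{thm:p-positive} for \( \pp_{\lambda_1-\lambda_2}(\uu) \), by its positive \( \uu \)-monomial expansion: the sum of \( u_\ww \) over words \( \ww \) of length \( \lambda_1+\lambda_2 \) with \( \ww_{2i-1}>_P\ww_{2i} \) for \( 1\le i\le\lambda_2 \) and with \( \ww_{2\lambda_2+1}\cdots\ww_{\lambda_1+\lambda_2} \) having no \( P \)-descent and no nontrivial left-to-right \( P \)-maximum. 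Pairing with \( \gamma_\mu=\sum_{\ww\in W(\mu)}q^{\inv_P(\ww)}\ww \) yields \( c_\lambda(q)=\sum_\ww q^{\inv_P(\ww)} \) over all such \( \ww \) of type \( \mu \). Finally, transposing the two letters of each block \( \ww_{2i-1}\ww_{2i} \) is a bijection onto the word set in the statement (which imposes \( \ww_{2i}>_P\ww_{2i-1} \) instead): it fixes the tail, preserves the type, and leaves \( \inv_P \) unchanged, since each transposed pair consists of \( P \)-comparable letters, hence of letters non-adjacent in \( \inc(P) \), so no \( P \)-inversion pair is created or destroyed.

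No step here is genuinely hard once the evaluation homomorphism viewpoint is in place. The part that most deserves care is the bookkeeping in the final paragraph — in particular the observation that transposing a \( P \)-comparable adjacent pair of letters preserves \( \inv_P \), which is exactly what allows the two letters of each \( \ee_2 \)-factor to be recorded in the orientation displayed in the statement.
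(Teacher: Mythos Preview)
Your argument is correct. The paper's proof is a one-line appeal to Theorem~\ref{thm:m=em} together with $\mono_k(\uu)\equiv\pp_k(\uu)$; you instead give a self-contained argument by factoring the evaluation homomorphism $\phi$ through $\mathbb Z[x_1,x_2]^{\mathfrak S_2}$ and reading off $m_\lambda(x_1,x_2)$ directly. The two routes are closely related---your two-variable specialization is essentially the $h=2$ case of the mechanism inside the proof of Theorem~\ref{thm:m=em}---but yours avoids citing the general recurrence and is arguably cleaner for this special case. You also do something the paper's one-line proof glosses over: the natural monomial expansion of $\ee_{2^{\lambda_2}}(\uu)$ gives words with $\ww_{2i-1}>_P\ww_{2i}$, whereas the statement records the opposite orientation $\ww_{2i}>_P\ww_{2i-1}$; your final paragraph supplies the $\inv_P$-preserving bijection that reconciles the two, which is a genuine (if easy) point that deserved to be made explicit.
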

\begin{proof}
This is a direct consequence of Theorem~\ref{thm:m=em} and the fact that $\mono_k(\uu)\equiv \pp_k(\uu)$ modulo $\II_P$.
\end{proof}

As Harada and Precup mentioned, this recurrence relation does not in general give
positive \( \uu \)-monomial expressions of all noncommutative $P$-monomial symmetric
functions; see \cite[Example 8.2]{HP2019}.

\section{Future directions} \label{sec:future_direction}
In this paper, we defined a local flip on proper colorings of the incomparability
graph of a natural unit interval order, and refined the Shareshian--Wachs'
refinement. In addition, local flips allow us to use the theory of noncommutative
symmetric functions to the chromatic quasisymmetric functions.
In this direction, there are some open questions.

\subsection{Unit arc graphs}
Unit arc graphs are generalizations of the incomparability graphs of natural unit
interval orders. They have diagram models which are similar to the interval models
in Section~\ref{subsec:NUIO}. They are also described via a generalization of
Dyck paths.
The chromatic quasisymmetric functions of unit arc graphs are conjecturally
Schur positive, moreover, \( e \)-positive.
For details, see \cite{AP2018}.

Considering the diagrams of unit arc graphs, one can extend results in
Section~\ref{sec:heap_local_flip} to unit arc graphs in a similar way.
In particular, we can define a local flip on proper colorings of unit arc graphs.
Meanwhile, unit arc graphs unfortunately have no poset structure, so we cannot
apply the method in Section~\ref{sec:Noncomm}.
If one settle a suitable definition of noncommutative symmetric functions
for unit arc graphs, then we expect that the method in Section~\ref{sec:Noncomm}
can be applied to this case, and that one can prove some positivity of
the chromatic quasisymmetric functions of unit arc graphs.

\subsection{Hessenberg varieties}
One of remarkable connections between chromatic quasisymmetric functions and
other areas is the connection with Hessenberg varieties.
As mentioned in Section~\ref{sec:intro}, the chromatic quasisymmetric function
of a natural unit interval order \( P \) is equal to the graded Frobenius
characteristic of the cohomology ring of the regular semisimple
Hessenberg variety corresponding to \( P \), up to the omega involution.
The ascent statistic on the chromatic quasisymmetric function plays a grading
on the cohomology. Since local flips refine the ascent statistic
(Proposition~\ref{prop:ascent_number}) and each equivalence class defines
a symmetric function (Theorem~\ref{thm:X_P=Omega}) which is Schur positive
(Theorem~\ref{thm:s-positive}) and conjecturally \( e \)-positive
(Conjecture~\ref{conj:refined_e-positivity}),
we wonder how local flips appear in the cohomology of the Hessenberg variety.

\subsection{LLT polynomials}
In \cite{CM2018}, the authors discovered a relation between chromatic quasisymmetric
functions and unicellular LLT polynomials. They gave a plethystic formula between
them, and due to this formula, to study unicellular LLT polynomials is very
closely related to studying the chromatic quasisymmetric functions of natural unit
interval orders.
Hence one can ask whether there is a combinatorial operation for LLT polynomials,
similar to local flips.
Alexandersson introduced a similar flip on orientations of the incomparability
graph of natural unit interval orders, and the flip preserves the ascent statistic
for the LLT polynomials~\cite{Alex}.
In a similar way, the flip defines an equivalence relation on monomials in
the LLT polynomials, and then each equivalence class defines
a symmetric polynomial. It is an interesting question whether each such symmetric
function is Schur positive.

\section*{Acknowledgments}
The author would like to thank Jang Soo Kim and Per Alexandersson for suggestions
and useful discussions.
The author would also like to appreciate the referee for numerous suggestions that
have improved the clarity and readability of this paper.

\bibliographystyle{alpha}

\begin{thebibliography}{GKL{\etalchar{+}}95}

\bibitem[Ale]{Alex}
Per Alexandersson.
\newblock {Private communication}.

\bibitem[AP18]{AP2018}
Per Alexandersson and Greta Panova.
\newblock {LLT} polynomials, chromatic quasisymmetric functions and graphs with
  cycles.
\newblock {\em Discrete Math.}, 341(12):3453--3482, 2018.

\bibitem[AS21]{AS2021}
Per Alexandersson and Robin Sulzgruber.
\newblock {{\(P\)}}-partitions and {{\(p\)}}-positivity.
\newblock {\em Int. Math. Res. Not.}, 2021(14):10848--10907, 2021.

\bibitem[Ass15]{Assaf2015}
Sami~H. Assaf.
\newblock {Dual equivalence graphs. {I}}: a new paradigm for {Schur}
  positivity.
\newblock {\em Forum Math. Sigma}, 3:33, 2015.
\newblock Id/No e12.

\bibitem[Ath15]{Athanasiadis2015}
Christos~A. Athanasiadis.
\newblock {Power sum expansion of chromatic quasisymmetric functions}.
\newblock {\em Electron. J. Comb.}, 22(2):research paper p2.7, 9, 2015.

\bibitem[BC18]{BC2018}
Patrick Brosnan and Timothy~Y. Chow.
\newblock {Unit interval orders and the dot action on the cohomology of regular
  semisimple {Hessenberg}} varieties.
\newblock {\em Adv. Math.}, 329:955--1001, 2018.

\bibitem[BF17]{BF2017}
Jonah Blasiak and Sergey Fomin.
\newblock {Noncommutative {Schur}} functions, switchboards, and {Schur}
  positivity.
\newblock {\em Sel. Math., New Ser.}, 23(1):727--766, 2017.

\bibitem[Bla16]{Blasiak2016}
Jonah Blasiak.
\newblock {Haglund's conjecture on 3-column {Macdonald}} polynomials.
\newblock {\em Math. Z.}, 283(1-2):601--628, 2016.

\bibitem[BN20]{BN2020}
Olivier Bernardi and Philippe Nadeau.
\newblock {Combinatorial reciprocity for the chromatic polynomial and the
  chromatic symmetric function}.
\newblock {\em Discrete Math.}, 343(10):12, 2020.
\newblock Id/No 111989.

\bibitem[CH19]{CH2019}
Soojin Cho and Jisun Huh.
\newblock {On {{\(e\)}}}-positivity and {{\(e\)}}-unimodality of chromatic
  quasi-symmetric functions.
\newblock {\em SIAM J. Discrete Math.}, 33(4):2286--2315, 2019.

\bibitem[CHSS16]{CHSS2016}
Samuel Clearman, Matthew Hyatt, Brittany Shelton, and Mark Skandera.
\newblock {Evaluations of {Hecke}} algebra traces at {Kazhdan}-{Lusztig} basis
  elements.
\newblock {\em Electron. J. Comb.}, 23(2):research paper p2.7, 56, 2016.

\bibitem[CM18]{CM2018}
Erik Carlsson and Anton Mellit.
\newblock {A proof of the shuffle conjecture}.
\newblock {\em J. Am. Math. Soc.}, 31(3):661--697, 2018.

\bibitem[FG98]{FG1998}
Sergey Fomin and Curtis Greene.
\newblock {Noncommutative {Schur}} functions and their applications.
\newblock {\em Discrete Math.}, 193(1-3):179--200, 1998.

\bibitem[Gas96]{Gasharov1996}
Vesselin Gasharov.
\newblock {Incomparability graphs of {{\((3+1)\)}}}-free posets are
  {{\(s\)}}-positive.
\newblock {\em Discrete Math.}, 157(1-3):193--197, 1996.

\bibitem[GKL{\etalchar{+}}95]{GKLLRT1995}
I.M. Gelfand, D.~Krob, A.~Lascoux, B.~Leclerc, V.S. Retakh, and J.Y. Thibon.
\newblock {Noncommutative Symmetrical Functions}.
\newblock {\em Advances in Mathematics}, 112(2):218--348, 1995.

\bibitem[GP13]{Guay-Paquet2013}
Mathieu Guay-Paquet.
\newblock {A modular relation for the chromatic symmetric functions of
  (3+1)-free posets}.
\newblock {\it Preprint},
  \href{https://arxiv.org/abs/1306.2400}{arXiv:1306.2400}, 2013.

\bibitem[GP16]{Guay-Paquet2016}
Mathieu Guay-Paquet.
\newblock {A second proof of the Shareshian--Wachs conjecture, by way of a new
  Hopf algebra}.
\newblock {\it Preprint},
  \href{https://arxiv.org/abs/1601.05498}{arXiv:1601.05498}, 2016.

\bibitem[Hai93]{Haiman1993}
Mark Haiman.
\newblock {Hecke algebra characters and immanant conjectures}.
\newblock {\em J. Am. Math. Soc.}, 6(3):569--595, 1993.

\bibitem[HNY20]{HNY2020}
JiSun Huh, Sun-Young Nam, and Meesue Yoo.
\newblock {Melting lollipop chromatic quasisymmetric functions and {Schur}}
  expansion of unicellular {LLT} polynomials.
\newblock {\em Discrete Math.}, 343(3):21, 2020.
\newblock Id/No 111728.

\bibitem[HP19]{HP2019}
Megumi Harada and Martha~E. Precup.
\newblock {The cohomology of abelian {Hessenberg}} varieties and the
  {Stanley}-{Stembridge} conjecture.
\newblock {\em Algebr. Comb.}, 2(6):1059--1108, 2019.

\bibitem[KP21]{KP2021}
Dongkwan Kim and Pavlo Pylyavskyy.
\newblock {Robinson-{Schensted}} correspondence for unit interval orders.
\newblock {\em Sel. Math., New Ser.}, 27(5):66, 2021.
\newblock Id/No 97.

\bibitem[Lam08]{Lam2008}
Thomas Lam.
\newblock {Ribbon {Schur}} operators.
\newblock {\em Eur. J. Comb.}, 29(1):343--359, 2008.

\bibitem[LS81]{LS1981}
Alain Lascoux and Marcel~P. Sch{\"u}tzenberger.
\newblock {Le mono{\"i}de plaxique}.
\newblock {\em Noncommutative structures in algebra and geometric combinatorics
  (Naples, 1978)}, 109:129--156, 1981.

\bibitem[SS58]{SS1958}
Dana Scott and Patrick Suppes.
\newblock {Foundational aspects of theories of measurement}.
\newblock {\em J. Symb. Log.}, 23:113–128, 1958.

\bibitem[SS93]{SS1993}
Richard~P. Stanley and John~R. Stembridge.
\newblock {On immanants of {Jacobi}}-{Trudi} matrices and permutations with
  restricted position.
\newblock {\em J. Comb. Theory, Ser. A}, 62(2):261--279, 1993.

\bibitem[Sta95]{Stanley1995}
Richard~P. Stanley.
\newblock {A symmetric function generalization of the chromatic polynomial of a
  graph}.
\newblock {\em Adv. Math.}, 111(1):166--194, 1995.

\bibitem[Sta98]{Stanley1998}
Richard~P. Stanley.
\newblock {Graph colorings and related symmetric functions: ideas and
  applications: {A}} description of results, interesting applications, and
  notable open problems.
\newblock {\em Discrete Math.}, 193(1-3):267--286, 1998.

\bibitem[Sta99]{EC2}
Richard~P. Stanley.
\newblock {\em {Enumerative combinatorics. {Volume} 2}}, volume~62 of {\em
  Camb. Stud. Adv. Math.}
\newblock Cambridge: Cambridge University Press, 1999.

\bibitem[SW16]{SW2016}
John Shareshian and Michelle~L. Wachs.
\newblock {Chromatic quasisymmetric functions}.
\newblock {\em Adv. Math.}, 295:497--551, 2016.

\bibitem[Tym08a]{Tymoczko2008a}
Julianna~S. Tymoczko.
\newblock {Permutation actions on equivariant cohomology of flag varieties}.
\newblock In {\em Toric topology. International conference, Osaka, Japan, May
  28--June 3, 2006}, pages 365--384. Providence, RI: American Mathematical
  Society (AMS), 2008.

\bibitem[Tym08b]{Tymoczko2008b}
Julianna~S. Tymoczko.
\newblock {Permutation representations on {Schubert}} varieties.
\newblock {\em Am. J. Math.}, 130(5):1171--1194, 2008.

\bibitem[Vie86]{Viennot1986}
G{\'e}rard~Xavier Viennot.
\newblock {Heaps of pieces. {I}}: {Basic} definitions and combinatorial lemmas.
\newblock Combinatoire {\'e}num{\'e}rative, {Proc}. {Colloq}.,
  {Montr{\'e}al}/{Can}. 1985, {Lect}. {Notes} {Math}. 1234, 321-350 (1986).,
  1986.

\end{thebibliography}
\newcommand{\etalchar}[1]{$^{#1}$}

\end{document}